\newtheorem*{thmA}{Theorem A}
\newtheorem*{thmB}{Theorem B}
\newtheorem{prop}{Proposition}[section]
\newtheorem{lemma}[prop]{Lemma}
\newtheorem{thm}[prop]{Theorem}
\theoremstyle{definition}
\theoremstyle{remark}
\newtheorem{remark}[prop]{Remark}
\numberwithin{equation}{section}
\begin{document}

\author{Mao Shinoda, Hiroki Takahasi, Kenichiro Yamamoto}

\address{Department of Mathematics,
Ochanomizu University, 2-1-1 Otsuka, Bunkyo-ku, Tokyo, 112-8610, JAPAN} 
\email{shinoda.mao@ocha.ac.jp}

\address{Department of Mathematics,
Keio University, Yokohama,
223-8522, JAPAN} 
\email{hiroki@math.keio.ac.jp}

\address{Department of General Education, Nagaoka University of Technology, Nagaoka 940-2188, JAPAN}
\email{k\_yamamoto@vos.nagaokaut.ac.jp}
\subjclass[2020]
{37B10, 37D35}%{Primary 37D35; Secondary 37B10}\subjclass[2020]{Primary 37D35; Secondary 37B10, 37D30}
\thanks{{\it Keywords}: ergodic optimization; maximizing measure; subshift; path connectedness}

%\thanks{}

%\date{}

\title[Ergodic optimization for continuous functions]
 {Ergodic optimization for continuous functions on the Dyck-Motzkin shifts}

\begin{abstract}
Ergodic optimization aims to describe dynamically invariant probability measures that maximize the integral of a given function. 
The Dyck and the Motzkin shifts are well-known examples of transitive subshifts that are not intrinsically ergodic.
We show that
the space of continuous functions on any Dyck-Motzkin shift splits into two subsets: one is a dense $G_\delta$ set with empty interior for which any maximizing measure has zero entropy; the other is contained in the closure of the set of functions having uncountably many, fully supported measures that are Bernoulli.
One key ingredient of a proof of this result is the path connectedness of the space of
ergodic measures of the Dyck-Motzkin shift.
\end{abstract}

\maketitle
%\tableofcontents

 \section{Introduction}
  Ergodic optimization aims to describe 
 properties of dynamically invariant
maximizing measures. In its most basic form, main constituent components are:
a continuous map $T$ of a compact metric space $X$; 
the space $M(X,T)$ of $T$-invariant Borel probability measures endowed with the weak* topology together with the space $M^{\rm e}(X,T)$ of its ergodic elements;
a continuous function $f\colon X\to\mathbb R$. Elements of $M(X,T)$ that attain the supremum \begin{equation}\label{alphaT}\Lambda_T(f)=\sup\left\{\int f{\rm d}\mu\colon\mu\in M(X,T)\right\}\end{equation}
are called 
{\it $f$-maximizing measures}.
The set of $f$-maximizing measures, denoted by $M_{\rm max}(f)$, is non-empty and contains elements of
$M^{\rm e}(X,T)$.
%\textcolor{red}{why $M^{\rm e}(X,T)$ is a Borel subset of $M(X,T)$?}
For a given $(X,T)$ and a Banach space of real-valued functions on $X$, we aim to establish properties of 
elements of $M_{\rm max}(f)$
for a `typical' function $f$ in the space.
The regularity of functions is crucial. For $(X,T)$ with some %sort of 
expanding or hyperbolic behavior and a H\"older continuous $f$,
%of continuous ones, the maximizing measure is unique and supported on a periodic orbit, see \cite{BoZha15,Bou01,Con16,CLT01,QS12}
the Ma\~n\'e-Conze-Guivarc'h lemma %restricts 
characterizes $f$-maximizing measures via their supports \cite{Bou00,Bou01,CLT01,Mor09}.
The analysis of functions in the space 
$C(X)$ of real-valued continuous functions on $X$ endowed with the supremum norm $\|\cdot\|_{C^0}$ is completely different: the Ma\~n\'e-Conze-Guivarc'h lemma is no longer valid, but duality arguments are available
and can be used to prove the occurrence of pathological phenomena.

%for a Baire generic continuous function, the maximizing measure is unique \cite{Bou01,Jen06}, has zero entropy \cite{Bre08,Mor10}, but fully supported \cite{Bou01}.
 Morris \cite[Corollary~2]{Mor10} proved that for $(X,T)$ with Bowen's specification property \cite{Bow71}, 
 the maximizing measure is unique, fully supported on $X$ (charging any non-empty open subset of $X$), has zero entropy, and is not strongly mixing for a generic continuous function, thereby unifying the the result of Bousch and Jenkinson \cite{BouJen02} and that of Br\'emont \cite{Bre08}. 
 In contrast to Morris's result, 
Shinoda \cite[Theorem~A]{Shi18} proved that 
for a dense set of continuous functions on
a topologically mixing Markov shift (subshift of finite type),
 there exist uncountably many, fully supported ergodic maximizing measures with positive entropy, which are actually Bernoulli. For an analogous result 
 on expanding Markov interval maps with holes, see \cite{ShiTak20}.

As a generalization and unification of
the result of Morris \cite[Corollary~2]{Mor10} and that of Shinoda \cite[Theorem~A]{Shi18} concerning entropies and supports of maximizing measures,
in \cite{STY24}
 the authors proved the following statement
for a wide class of non-Markov subshifts $\Sigma$ over a finite alphabet:
There exists a constant $h_{\rm spec}^\bot(\Sigma)\in[0,h_{\rm top}(\Sigma))$
 such that
if $h_{\rm spec}^\bot(\Sigma)\leq H<h_{\rm top}(\Sigma)$ then
\begin{itemize}
\item[(I)] the set
$\{f\in C(\Sigma)\colon h(\mu)\leq H
\text{ for all $\mu\in M_{\rm max}(f)$}\}$
is dense $G_\delta$;  
\item[(II)] for any $f\in C(\Sigma)$ not contained in the dense $G_\delta$ set in (I)
 and for any neighborhood $U$ of $f$ in $C(\Sigma)$, there exists $g\in U$ 
 such that 
 $\{\mu\in M_{\rm max}(g)\colon h(\mu)>H\}$
contains uncountably many fully supported ergodic measures,
\end{itemize}
where $h_{\rm top}(\Sigma)<\infty$ denotes the topological entropy of $\Sigma$, and 
 $h(\mu)\in[0,h_{\rm top}(\Sigma)]$ denotes the measure-theoretic entropy of a shift-invariant measure $\mu$ on $\Sigma$. 
 The constant $h_{\rm spec}^\bot(\Sigma)$ is called {\it the obstruction entropy to specification} \cite{CT14}.
Recall that dense $G_\delta$ sets are countable intersections of open dense subsets, and a property that holds for a dense $G_\delta$ set is said to be generic.

A subshift carrying a unique measure of maximal entropy is called {\it intrinsically ergodic}.
The statement in the previous paragraph requires $h_{\rm spec}^\bot(\Sigma)<h_{\rm top}(\Sigma)$ that actually implies the intrinsic ergodicity of the subshift $\Sigma$ \cite[Theorem~C]{CT13}.
  % for a Baire generic continuous function, the maximizing measure is unique \cite{Jen06}, fully supported \cite{Bou01}, has zero entropy \cite{Bre08}.
% \begin{thm}[Bousch \cite{Bou01}]\label{Bousch} If $(X,d)$ is a compact metric space and $T\colon X\to X$ is an expanding map, then there is a residual subset $\mathcal O$ of $C(X)$ such that for all $\varphi\in\mathcal O$ the $\varphi$-maximizing measure is unique and fully supported on $X$. \end{thm}
% We say a Borel probability measure $\mu$ on $X$ is {\bf fully supported} on $X$ if it charges any non-empty open subset of $X$.
%Although only one-sided Markov shifts were treated in \cite{Shi18}, the same proof as in \cite{Shi18} works for two-sided Markov shifts.
%In \cite{Shi18}, Shinoda only stated that the set of ergodic $f$-maximizing measures for $f\in F$ is uncountable, but a glance at the proof shows the statement on the cardinality as in Theorem~\ref{Shinoda-A}
Therefore, it is natural to ask if 
 an analogous statement %of the above kind 
 holds for a subshift that is not intrinsically ergodic.

As a counterexample to the conjecture of Weiss \cite{W70}, 
Krieger \cite{Kri74} proved that the Dyck shift
 has exactly two ergodic measures of maximal entropy. % which are fully supported and Bernoulli. 
%There is a universal formal language due to W. Dyck. The Dyck shift is the symbolic dynamics generated by that language.
%The Dyck shift is a rich source of interesting phenomena in symbolic dynamics. These measures are exponentially mixing for H\"older functions \cite{T23}, and so satisfy the central limit theorem. For other interesting properties of the Dyck shift, see \cite{HI05,K91,Mey08} for example. 
The Motzkin shift \cite{M04,M05} is a subshift determined by the Dyck shift and the units.
Krieger \cite{Kri00} introduced a certain class of shift spaces having some algebraic property, called property A subshifts. The Dyck and Motzkin shifts are prototypes %fundamental shift spaces 
in this class, and
rich sources of interesting phenomena different from those in Markov shifts, %rich sources of interesting phenomena in symbolic dynamics as well as in smooth dynamics, 
see 
\cite{HI05,M04,M05,Mey08,T23}
%\cite{HI05,K91,M04,M05,Mey08,STY21,STY23,STYY,T23,TY23} 
for example.
%Meyerovich investigated tail invariant measures of the Dyck shift.
%The second named author proved that the two ergodic measures of maximal entropy of the Dyck shift are exponentially mixing for H\"older functions, and so satisfy the central limit theorem. 
In this paper we consider ergodic optimization for continuous functions on these subshifts that are not intrinsically ergodic.

%A {\it Bernoulli measure} is a  Borel probability measure on the one- or two-sided full shift space on $k$-symbols $(k\geq2)$ that is determined by a non-degenerate $k$-dimensional probability vector $(p_1,\ldots,p_k)$. Bernoulli measures are shift invariant and ergodic, and define Bernoulli systems.
%We say a measurable dynamical system $(T,\nu)$, or simply $\nu$ is $(p_1,\ldots,p_k)$-{\it Bernoulli} if it is isomorphic to the $(p_1,\ldots,p_k)$-Bernoulli system. 

%\item The non-commutative entropy is strictly larger than the topological entropy \cite{M04}.

%The Dyck-Motzkin shift is a slight generalization of the Dyck shift.
%For all these and other interesting recent developments, %including \cite{TT},  there are still many unsolved problems on the dynamics of the Dyck shift.

\subsection{Ergodic optimization for continuous functions}\label{erg-o}
In Section~\ref{dyck} we will introduce shift spaces $\Sigma_D$
 with two non-negative integer parameters $(M,N)$, called the {\it Dyck-Motzkin shifts}.
 The $(M,0)$ Dyck-Motzkin shift is nothing but the Dyck shift on $2M$ symbols,
consisting of $M$ brackets, left and right in pair, whose admissible words are words of legally aligned brackets.
 The $(M,N)$ Dyck-Motzkin shift with $N\neq0$ is nothing but the Motzkin shift on $2M+N$ symbols, consisting of the $M$ brackets and $N$ units whose admissible words are words of legally aligned brackets with freely interspersed  units.
 Our main result below recovers the above (I) (II) with $H=0$ for the Dyck-Motzkin shifts.

\begin{thmA}\label{thma}
Let $\Sigma_D$ be a Dyck-Motzkin shift.
\begin{itemize}
\item[(a)] The set
$\{f\in C(\Sigma_D)\colon h(\mu)=0
\text{ for all $\mu\in M_{\rm max}(f)$}\}$
is dense $G_\delta$.
\item[(b)] There exists a dense subset $\mathscr{D}$ of $C(\Sigma_{D})$ such that for any $f\in \mathscr{D}$, 
$M_{\rm max}(f)$
contains uncountably many elements that are fully supported on $\Sigma_D$ and Bernoulli.
\end{itemize}
\end{thmA}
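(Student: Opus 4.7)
The plan is to handle (a) and (b) by two separate arguments that parallel, respectively, the generic zero-entropy results of Morris \cite{Mor10} and the dense positive-entropy constructions of Shinoda \cite{Shi18}. Both must be adapted to the setting where $\Sigma_D$ is not intrinsically ergodic, so that the framework of \cite{STY24}, which requires $h_{\rm spec}^{\bot}(\Sigma)<h_{\rm top}(\Sigma)$, does not directly yield the $H=0$ statement.

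For part (a), I first show the set is $G_\delta$ by expressing it as $\bigcap_{n\geq 1}\{f\in C(\Sigma_D):\sup_{\mu\in M_{\rm max}(f)}h(\mu)<1/n\}$. Each of these sets is open because $f\mapsto\sup_{\mu\in M_{\rm max}(f)}h(\mu)$ is upper semicontinuous, a consequence of the weak$^*$ upper semicontinuity of $\mu\mapsto h(\mu)$ on the expansive system $\Sigma_D$ together with the outer semicontinuity of the set-valued map $f\mapsto M_{\rm max}(f)$. For density, fix $f\in C(\Sigma_D)$ and $\varepsilon>0$. Using the combinatorial structure of $\Sigma_D$ (any long admissible block can be closed into a periodic word by appending a balanced tail of brackets and units), one can realize $\Lambda_T(f)$ as a supremum of integrals against periodic orbit measures. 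Taking a periodic orbit $\mathcal{O}$ with integral within $\varepsilon/2$ of $\Lambda_T(f)$ and setting $g(x)=f(x)-\varepsilon\,\mathrm{dist}(x,\mathcal{O})/\mathrm{diam}(\Sigma_D)$ makes $\mu_{\mathcal{O}}$ the unique $g$-maximizing measure, and $\mu_{\mathcal{O}}$ has zero entropy.

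For part (b), the approach crucially uses the path connectedness of $M^{\rm e}(\Sigma_D,\sigma)$ announced in the abstract. First I would exhibit a nontrivial continuous family $\{\mu_t\}_{t\in[0,1]}$ of pairwise distinct, fully supported Bernoulli measures on $\Sigma_D$; such a family is produced by varying parameters of an ergodic Markov measure on the natural labelled graph presentation of $\Sigma_D$, keeping all transition probabilities strictly positive, with the Bernoulli conclusion following from the weak Bernoulli property of finite-state ergodic Markov chains transferred through the presentation. Given $f\in C(\Sigma_D)$ and $\varepsilon>0$, the goal is to produce $g\in C(\Sigma_D)$ with $\|f-g\|_{C^0}<\varepsilon$ such that $\int g\,d\mu_t=\Lambda_T(g)$ for every $t\in[0,1]$. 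This is a flattening construction: choose a small $C^0$ perturbation $\phi$ so that $t\mapsto\int(f-\phi)\,d\mu_t$ becomes constant, and then, by a further small correction compared against the non-Bernoulli part of $M(\Sigma_D,\sigma)$, verify that this common value is actually $\Lambda_T(g)$. The resulting $g$ has $M_{\rm max}(g)\supseteq\{\mu_t:t\in[0,1]\}$, an uncountable family of fully supported Bernoulli measures.

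The main obstacle is the flattening step in (b): the $C^0$-small perturbation $\phi$ must be chosen with enough freedom to kill the variation of $t\mapsto\int f\,d\mu_t$, and one must certify that no other invariant measure --- in particular, neither of the two measures of maximal entropy from \cite{Kri74} --- achieves a larger integral against $g$ than the common value along the path. This is where the path connectedness of $M^{\rm e}(\Sigma_D,\sigma)$ is decisive, as it reduces the flattening to a one-dimensional variational problem along $[0,1]$ and enables interpolation across the whole Bernoulli family.
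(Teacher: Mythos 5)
Your proposal parallels the paper's high-level plan (adapt Morris for (a), adapt Shinoda/Israel for (b), and use the path-connectedness of $M^{\rm e}(\Sigma_D)$ as the new ingredient), but both of your central density/construction steps have genuine gaps, and the actual arguments in the paper are substantially different.

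\emph{On part (a).} The $G_\delta$ claim via upper semicontinuity of $f\mapsto\sup_{\mu\in M_{\rm max}(f)}h(\mu)$ is reasonable; the issue is density. Your perturbation $g(x)=f(x)-\varepsilon\,\mathrm{dist}(x,\mathcal O)/\mathrm{diam}(\Sigma_D)$ does \emph{not} force $\mu_{\mathcal O}$ to be the unique $g$-maximizing measure when $\mu_{\mathcal O}$ is only $\varepsilon/2$-close to being $f$-maximizing. Tracking the inequalities, a $g$-maximizing $\nu$ need only satisfy $\int\mathrm{dist}(\cdot,\mathcal O)\,d\nu\le\mathrm{diam}(\Sigma_D)/2$, which places no constraint on the support or entropy of $\nu$; in particular $\nu$ could still be one of Krieger's measures of maximal entropy. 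The standard argument that ``subtracting distance to the support makes the measure uniquely maximizing'' requires $\mu_{\mathcal O}$ to already be $f$-maximizing, not merely almost maximizing. The paper does not try to make this elementary perturbation work; it instead invokes Morris's Theorem~1.1 of \cite{Mor10}, which is a nontrivial duality result asserting that if $O\subset\overline{M^{\rm e}(\Sigma_D)}$ is open and dense, then $\{f:\overline{M^{\rm e}(\Sigma_D)}\cap M_{\rm max}(f)\subset O\}$ is open and dense in $C(\Sigma_D)$. What the paper supplies to feed that theorem is the density of $CO$-measures in $M^{\rm e}(\Sigma_D)$ (Proposition~\ref{per-approx}), proved via the trichotomy of Lemma~\ref{trichotomy}; your remark about closing blocks with balanced tails is roughly the right idea for that density, but it is only an input to Morris's machinery, not a shortcut around it.

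\emph{On part (b).} The ``flattening'' construction is the critical gap. You propose to find a $C^0$-small $\phi$ so that $t\mapsto\int(f-\phi)\,d\mu_t$ is constant and then certify this common value is the global maximum. There is no reason a small $C^0$ perturbation of $f$ can be made to annihilate the variation of $\int f\,d\mu_t$ along a prescribed uncountable family of measures, and even if it could, you give no mechanism to guarantee nothing outside the path beats that value; ``a further small correction compared against the non-Bernoulli part'' is not a construction. The paper avoids prescribing the maximizing set in advance. Instead it forms a non-atomic measure $\hat m_R$ on $M^{\rm e}(\Sigma_D)$ by pushing Lebesgue measure through a high-complexity path (Proposition~\ref{join-lem}), takes its barycenter $\mu$, and applies Israel's Bishop--Phelps-type Theorem~\ref{bis} to produce $\tilde f$ near $f$ and $\tilde\mu$ tangent to $\Lambda_\sigma$ at $\tilde f$ with $\|\tilde\mu-\mu\|$ small; Lemma~\ref{norm-lem} (the barycenter map is isometric) and Lemma~\ref{supp-lem} then force $\mathrm{supp}(b_{\tilde\mu})$ to pick up a $b_\mu$-positive, hence uncountable, subset of the path, all of which lie in $M_{\rm max}(\tilde f)$. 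That duality step is the engine of the proof and is entirely absent from your sketch. A secondary issue: obtaining the family of fully supported Bernoulli measures on $\Sigma_D$ is itself nontrivial because $\Sigma_D$ is not an SFT; the paper produces them by transporting Markov/Gibbs measures from the full shifts $\Sigma_\alpha,\Sigma_\beta$ through the Borel embeddings $\psi_\gamma$ (Lemmas~\ref{support} and~\ref{Sig-lem}, with the discontinuity of the embedding handled by Lemmas~\ref{include-lem2} and~\ref{contained-lem}); your suggestion to vary Markov measures on the infinite labelled-graph presentation would need an equally careful justification of full support, ergodicity, Bernoulliness, and weak$^*$ continuity in the parameter, none of which is supplied.
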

%An analogous statement holds for any one-sided Dyck-Motzkin shift. For simplicity, we restrict ourselves to the two-sided shifts.

%It is worthwhile to compare Theorem~A and \cite[Theorem~B]{STY24}.
Statements like Theorem~A replacing `Bernoulli' in (b) by `ergodic, positive entropy' were obtained in \cite[Theorem~B]{STY24} for any subshift $\Sigma$ %over a finite alphabet
for which $h_{\rm top}(\Sigma)>h_{\rm spec}^\bot(\Sigma)=0$ and ergodic measures are entropy dense. The definition of entropy density can be found in \cite{EKW94}. 
For the $(M,N)$ Dyck-Motzkin shift $\Sigma_D$, we note that $h_{\rm top}(\Sigma_D)=h_{\rm spec}^\bot(\Sigma_D)=\log(M+N+1)$ holds, and
 ergodic measures are not entropy dense.

For proofs of (a) and (b) of Theorem~A, we develop 
ideas of Morris \cite[Theorem~1.1,\ Corollary~2]{Mor10} and Shinoda \cite[Theorem~A]{Shi18} respectively, both related to approximations of ergodic measures in the weak* topology.
Regarding (a), a key observation is that
 Bowen's specification property does not hold
 for the Dyck-Motzkin shift, but the hypothesis of
  Bowen's specification property in \cite[Corollary~2]{Mor10} can actually be weakened to the density of invariant measures of zero entropy in the space of ergodic measures.
   For any Dyck-Motzkin shift,
  we show in Section~\ref{approx-sec} that 
 $CO$-measures (shift-invariant ergodic measures supported on  periodic orbits)
 are dense in the space of ergodic measures. This property allows us to slightly modify the argument in the proof of \cite[Theorem~1.1]{Mor10} to conclude Theorem~A(a).

The proof of Theorem~A(b)
 deserves a special attention as it gives a new insight into the structure of the spaces of ergodic measures of the Dyck-Motzkin shifts.
Below we give further explanations, but first require simple definitions.
Let $X$ be a topological space and let $x$, $y\in X$ be distinct points.
A continuous map $p\colon[0,1]\to X$ such that $p(0)=x$ and $p(1)=y$ is called a {\it path} joining $x$, $y$. We say a path $p\colon[0,1]\to X$ {\it lies} in $Y\subset X$ if $p(t)\in Y$ holds for all $t\in [0,1]$.

Israel \cite[Section~V]{Isr79} proved an approximation theorem about tangent functionals to convex functions, and used it  for lattice systems in statistical mechanics to prove the existence of a dense set of continuous interactions that admit  uncountably many ergodic equilibrium states. 
Shinoda's proof of \cite[Theorem~A]{Shi18}
is an adaptation of Israel's argument to ergodic optimization that is briefly outlined as follows.
For a real-valued continuous function $f_0$ on a compact metric space $X$ and a continuous map $T\colon X\to X$ such that $M^{\rm e}(X,T)$ is arcwise connected, 
%Shinoda \cite{Shi18} noted that one can define a natural non-atomic Borel probability measure on $M^{\rm e}(X,T)$. 
%and $f_0\in C(X)$, 
she took an ergodic measure $\mu\in M_{\rm max}(f_0)$ and a path $t\in[0,1]\mapsto \mu_t\in M^{\rm e}(X,T)$ 
such that $\mu_0=\mu$, and then used the version of the Bishop-Phelps theorem \cite[Theorem~V.1.1]{Isr79}
to approximate $f_0$ by $f\in C(X)$ so that $\{\mu_t\colon t\in[0,1]\}$ contains uncountably many  elements of $M_{\rm max}(f)$.
%For a general result obtained with this idea, see \cite[Theorem~B]{Shi18}.
One can control properties of the maximizing measures by carefully choosing the path.
To choose a path such that the uncountably many maximizing measures are fully supported and Bernoulli, 
Shinoda used Sigmund's result \cite[Theorem~B]{Sig77} which asserts that the space of shift-invariant ergodic mesures on a topologically mixing Markov shift is path connected.

Our strategy for the proof of Theorem~A(b) is to substantially extend Shinoda's path argument to the Dyck-Motzkin shifts. 
 Since the Dyck-Motzkin shifts are not Markov, 
 Sigmund's result \cite[Theorem~B]{Sig77} is no longer valid. To overcome this difficulty, we delve into the structure of the shift space and show
%it suffices to show that 
%for any shift-invariant ergodic measure $\mu$ 
%of the Dyck-Motzkin shift, there is
% a path $t\in[0,1]\mapsto \mu_t$ that lies in the space of shift-invariant ergodic measures and satisfies $\mu_0=\mu$. 
 the following abundance of paths of ergodic measures of high complexity:
\smallskip

 \begin{itemize}
\item[(1)] {\it Any pair of ergodic measures of the Dyck-Motzkin shift in any weak* open ball can be joined by a path that lies in that ball}, 
and moreover

\item[(2)]  {\it this path `almost' lies in the set of measures that are fully supported and Bernoulli}. 
\end{itemize}

Since any Dyck-Motzkin shift contains many subshifts of finite type (SFTs) in its shift space, 
Sigmund's result \cite[Theorem~B]{Sig77} can still be used to find a high complexity path joining two ergodic measures for the Dyck-Motzkin shift that are supported on the same properly embedded SFT.
Proper embedding means a one-to-one, into but not onto conjugacy of shift spaces.
It is not always possible to find an SFT
that supports a given pair of ergodic measures.
 Hamachi and Inoue \cite[Theorem~5.3]{HI05} provided a necessary and sufficient condition for the existence of a proper embedding of an irreducible SFT into the Dyck shift in terms of topological entropy of the shift spaces and multipliers of periodic points in them.
From their result it immediately follows that for any integer $M\geq2$, there is no SFT of entropy $\log(M+1)$ that can be embedded into the Dyck shift on $2M$ symbols.
For a corresponding result on the property A subshifts, see \cite[Theorem~5.8]{HIK09}.
In particular, there is no embedded SFT in the Dyck shift that supports the two ergodic measures of maximal entropy $\log(M+1)$ constructed by Krieger \cite{Kri74}.
An analogous statement holds for the Motzkin shift.

%For the Dyck shift on $2M$ symbols,
%Krieger \cite{Kri74} proved the existence of exactly two ergodic measures of maximal entropy 
%$\log(M+1)$. 
Krieger's construction of the two ergodic measures of maximal entropy for the Dyck shift 
relies on the construction of two different Borel embeddings of the full shift on $M+1$ symbols into the Dyck shift. Borel embedding means a %homeomorphic
one-to-one
conjugacy defined on a shift-invariant proper Borel subset
that does not have a continuous extension to the whole shift space (see Section~\ref{Borel-sect} for the definition).
In order to prove (1) (2), we begin by extending Krieger's construction \cite[Section~4]{Kri74} of Borel embeddings of the full shifts to the Dyck-Motzkin shift $\Sigma_D$. We then transport sequences of ergodic measures on $\Sigma_D$ to the two full shift spaces via the inverses of these Borel embeddings, and 
construct paths joining the transported measures.
Finally, we 
transport these paths back to the space of ergodic measures on $\Sigma_D$, and concatenate the  transported paths to obtain a desired path.
Since the Borel embeddings do not have
%are not defined on 
continuous extensions to the whole full shift spaces,
the transport in the last step
%is not immediate and 
needs a justification.
For precise statements of (1) (2) with relevant definitions and more detailed explanations, see Section~\ref{abundance}.

\subsection{Connectedness of spaces of ergodic measures}
 Recall that a topological space $X$
 is {\it path connected} if for any pair $x,y$ of its distinct points there exists a path that lies in $X$ and joins $x$, $y$. 
 We say $X$ is {\it arcwise connected} if it is path connected and the path can be taken to be a homemorphism onto its image.
We say $X$ is {\it locally path connected} (resp. {\it locally arcwise connected}) if 
 any point in it has a neighborhood base consisting of open sets that are path connected (resp. arcwise connected).
 For a Hausdorff space, 
 the path connectedness implies the arcwise connectedness.
% \url{https://math.stackexchange.com/questions/3488915/arcwise-connected-vs-path-connected}

The property (1) immediately yields the following result.

\begin{thmB}
The space of shift-invariant ergodic Borel probability measures on any Dyck-Motzkin shift is path connected and locally path connected with respect to the weak* topology.\end{thmB}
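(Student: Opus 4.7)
The plan is to deduce Theorem~B directly from property~(1) announced in Section~\ref{abundance}. The weak$^*$ topology on the space of Borel probability measures on the compact metrizable space $\Sigma_D$ is itself metrizable; fix any compatible metric $d$ and, for a measure $\mu$ and $r>0$, let $B(\mu,r)$ denote the open $d$-ball of radius $r$ centered at $\mu$. Property~(1) asserts that for any such ball $B$ and any two ergodic measures $\nu_1,\nu_2\in B$, there exists a continuous path $p\colon[0,1]\to M^{\rm e}(\Sigma_D)$ with $p(0)=\nu_1$, $p(1)=\nu_2$ and $p([0,1])\subset B$.

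Path connectedness then follows immediately: given any two distinct ergodic measures $\mu,\nu$ on $\Sigma_D$, choose any $r>d(\mu,\nu)$ so that both $\mu$ and $\nu$ lie in $B(\mu,r)$, and apply~(1). For local path connectedness, fix $\mu\in M^{\rm e}(\Sigma_D)$ and let $U$ be an arbitrary open neighborhood of $\mu$ in $M^{\rm e}(\Sigma_D)$. Choose $r>0$ with $B(\mu,r)\cap M^{\rm e}(\Sigma_D)\subset U$. For any $\nu\in B(\mu,r)\cap M^{\rm e}(\Sigma_D)$, property~(1) applied to the pair $\mu,\nu$ inside the ball $B(\mu,r)$ produces a path from $\mu$ to $\nu$ lying in $B(\mu,r)\cap M^{\rm e}(\Sigma_D)\subset U$. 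Hence $B(\mu,r)\cap M^{\rm e}(\Sigma_D)$ is path connected, and since such intersections form a neighborhood base of $\mu$ in the relative topology, local path connectedness follows.

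The real substance of Theorem~B lies in the proof of~(1), which will require the Krieger-type Borel embeddings of full shifts into the Dyck-Motzkin shift and the concatenation of paths transported through these embeddings, as outlined in the introduction; that is where the hard work must be done. By contrast, the deduction presented above is an instance of the standard recipe converting the statement \emph{any two points of a basic open set are joined by a path in that set} into simultaneous path connectedness and local path connectedness, and so it does not present a real obstacle once~(1) has been secured.
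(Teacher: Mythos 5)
Your deduction is the same \emph{standard recipe} the paper uses, and the paper itself advertises in Section~1 that property~(1) ``immediately yields'' Theorem~B, so in spirit you are following the intended route. However, there is a real discrepancy between the informal statement~(1) you quote and what the paper actually proves. The precise version is Proposition~\ref{join-lem}, and it is restricted: for a convex open $U\subset M(\Sigma_D)$ and a \emph{fixed} $\gamma\in\{\alpha,\beta\}$, it joins two distinct measures in $U\cap(M^{\rm e}_0(\Sigma_D)\cup M^{\rm e}_\gamma(\Sigma_D))$ by a high complexity path lying in that same set. It does not directly produce a path inside $U$ between $\mu^+\in M^{\rm e}_\alpha(\Sigma_D)$ and $\mu^-\in M^{\rm e}_\beta(\Sigma_D)$ in one stroke; the two halves $U\cap(M^{\rm e}_0\cup M^{\rm e}_\alpha)$ and $U\cap(M^{\rm e}_0\cup M^{\rm e}_\beta)$ must be glued along $U\cap M^{\rm e}_0$. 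If you flesh out your proof using only the unrestricted~(1), you are assuming more than what Proposition~\ref{join-lem} gives.

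The paper's actual proof of Theorem~B (Section~4.3) handles this by decomposing $M^{\rm e}(\Sigma_D)=M^{\rm e}_0\cup M^{\rm e}_\alpha\cup M^{\rm e}_\beta$: global path connectedness follows from Proposition~\ref{cor-path-1} (each of $M^{\rm e}_0\cup M^{\rm e}_\alpha$ and $M^{\rm e}_0\cup M^{\rm e}_\beta$ is path connected, they cover $M^{\rm e}(\Sigma_D)$, and they overlap in the nonempty $M^{\rm e}_0$); local path connectedness is obtained by taking convex open $U\subset M(\Sigma_D)$ (your metric balls in $M^{\rm e}(\Sigma_D)$ would serve equally well as a neighborhood base, so that choice is cosmetic), applying Proposition~\ref{join-lem} to each of the two halves of $U\cap M^{\rm e}(\Sigma_D)$, and gluing. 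Your write-up should either follow this two-piece decomposition, or separately justify the unrestricted statement~(1) — which would require showing that any convex open $U$ meeting both $M^{\rm e}_\alpha(\Sigma_D)$ and $M^{\rm e}_\beta(\Sigma_D)$ also meets $M^{\rm e}_0(\Sigma_D)$, a fact that is not automatic since $M^{\rm e}_0(\Sigma_D)$ is not dense in $M^{\rm e}(\Sigma_D)$. Once this bookkeeping is in place, the rest of your argument matches the paper.
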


      An investigation of the topological structure of the space $M(X,T)$ 
      began with the works of Sigmund \cite{Sig70,Sig77} in the 70s, and has recently gained a renewed impetus. 
      One motivation comes from the fact that % (From \cite{KKK18}) 
      every Polish topological space is homeomorphic to a set of ergodic measures of some shift space over a finite alphabet, 
which follows from \cite[Theorem]{Hay75} and \cite[Theorem~5]{Dow91}.
%\textcolor{red}{Check if \cite{Cho69} is valid here.}
The space $M(X,T)$ is a
  Choquet simplex %\cite{Ph01} 
      whose extreme points are precisely the set
      $M^{\rm e}(X,T)$ of ergodic measures. 
            Choquet simplices with dense extreme points are isomorphic up to affine homeomorphisms, and the unique simplex is called the Poulsen simplex \cite{Pou61}. If $M(X,T)$ is a Poulsen simplex, then the path connectedness and the local path connectedness of $M^{\rm e}(X,T)$ follow from a complete description of the topological structure of the Poulsen simplex given in \cite{LOS78}.
            
      The space of ergodic measures of
any Dyck-Motzkin shift is 
not Poulsen, and it is path connected by Theorem~B. In his blog, Climenhaga gave a rough sketch of a proof of the path connectedness of the space of ergodic measures of the Dyck shift. 
However a justification is needed.
      For other examples of subshifts 
      whose spaces of ergodic measures are not Poulsen and path connected, see \cite[Corollary~7]{KKK18} and \cite[Section~4]{KLW16}.
For relevant results on structures of the spaces of ergodic measures of partially hyperbolic diffeomorphisms, see
   \cite{DGMR19,GP17}.
   A sufficient condition for the path connectedness of $M^{\rm e}(X,T)$ in terms of periodic points and $CO$-measures of $T$ was given in \cite[Theorem~6.1]{GK18}, which however does not apply to the Dyck-Motzkin shift.
In any of these previous works, 
there was no discussion on the local path connectedness of the space of ergodic measures. Needless to say,
a proof of the local path connectedness is more delicate than that of the mere path connectedness.

%A different approach to Sigmund's theorem is to show that  ergodic measures on transitive topological Markov shifts are dense in the space of all invariant measures. Since the latter space is a Choquet simplex and ergodic measures are its extremal points, it means that this space is the Poulsen simplex (which is unique up to an affine homeomorphism). The desired result now follows from a complete description of the Poulsen simplex given in \cite{LOS78}.

     The rest of this paper consists of three sections. In Section~2 we collect preliminary results needed for the proofs of our main results. In Section~3 we give precise statements of (1) (2) and prove them. In Section~4 we complete the proofs of Theorems~A and B.

\section{Preliminaries}
In this section we collect preliminary results needed for the proofs of our main results.
After recalling
 basic terms and notation in symbolic dynamics
 in Section~\ref{s-w-l}, 
 we introduce the Dyck-Motzkin shifts in Section~\ref{dyck}. 
 Following Krieger \cite[Section~4]{Kri74},
in Section~\ref{iso-sec} we classify ergodic measures of the Dyck-Motzkin shifts into three types, and in Section~\ref{DM-str} construct embeddings of two full shifts on $M+N+1$ symbols into the $(M,N)$ Dyck-Motzkin shift. In Section~\ref{transport-esc} we deal with a transportation of ergodic measures by means of these embeddings. 
In Section~\ref{Borel-sect} we show that the embeddings constructed in Section~\ref{DM-str} are Borel embeddings.
%is an addendum to Section~\ref{DM-str}.
In Section~\ref{approx-sec} we prove an approximation result by $CO$-measures for the Dyck-Motzkin shifts.
%We say $X$ is {\bf path connected} if every pair of distinct points of $X$ is joined by a path. We say $X$ is {\bf arcwise connected} if every pair of distinct points in $X$ is joined by a path $p\colon[0,1]\to X$ that is a  homeomorphism onto its image. Clearly the arcwise connectedness implies the path connectedness.
%If $X$ is Hausdorff, the path connectedness implies the arcwise connectedness.\url{https://math.stackexchange.com/questions/3488915/arcwise-connected-vs-path-connected}  We say $X$ is {\bf locally arcwise connected} if  any point in $X$ has a neighborhood base consisting of arcwise connected open sets.
 %for any $x\in X$ and any neighborhood $U$ of $x$ there exists an arcwise connected open set $V$ in $X$ such that $x\in V\subset U$. \url{https://en.wikipedia.org/wiki/Locally_connected_space}\url{https://en.wikipedia.org/wiki/Lexicographic_order_topology_on_the_unit_square}\url{https://en.wikipedia.org/wiki/Order_topology}

\subsection{Basic terms and notation}\label{s-w-l}
Let $S$ be a non-empty finite discrete set, called a {\it finite alphabet} and let $S^{\mathbb Z}$ denote 
the two-sided Cartesian product topological space of $S$. %This topology is metrizable with the Hamming metric. 
  The left shift acts continuously on $S^{\mathbb Z}$.
   A shift-invariant closed subset of $S^{\mathbb Z}$ is called a {\it subshift} over $S$.
  A finite string $\omega=\omega_1\omega_2\cdots \omega_n$ of elements of $S$ is called a {\it word} of length $n$ in $S$. For convenience, we introduce an empty word $\emptyset$ by the rules
  $\emptyset \omega=\omega\emptyset=\omega$ for any word $\omega$ in $S$. The word length of the empty word is set to be $0$.
 For a subshift $\Sigma$ over $S$ and $n\in\mathbb N\cup\{0\}$,
 let $\mathcal L_n(\Sigma)$
 denote the collection of words in $S$ of word length $n$ that appear in some elements of $\Sigma$.
 Put $\mathcal L(\Sigma)=\bigcup_{n\in\mathbb N\cup\{0\}}\mathcal L_n(\Sigma)$.
Words in $\mathcal L(\Sigma)\setminus\{\emptyset\}$ are called {\it admissible}. 
For a subshift $\Sigma$
%that is either $\Sigma_D$, $\Sigma_\alpha$ or $\Sigma_\beta$, 
and for $j\in\mathbb Z$, $n\in\mathbb N$, $\omega\in \mathcal L_n(\Sigma)$, 
define
\[\Sigma(j;\omega)=\{(x_i)_{i\in\mathbb Z}\in\Sigma\colon x_{i+j-1}=\omega_{i}\ \text{ for  }i=1,\ldots, n\}.\]
Unless otherwise stated, we use the letter $\sigma$ to denote the left shift acting on a subshift $\Sigma\colon(\sigma x)_i=x_{i+1}$ for all $i\in\mathbb Z$. 
For a subshift $\Sigma$, let $M(\Sigma)$ denote the space of shift-invariant Borel probability measures on $\Sigma$ endowed with the weak* topology, and let $M^{\rm e}(\Sigma)$ denote the space of elements of $M(\Sigma)$ that are ergodic.
For a sequence $\{\mu_n\}_{n\in\mathbb N}$ of Borel probability measures on $\Sigma$ that converges to $\mu\in M(\Sigma)$ in the weak* topology, we write $\mu_n\to\mu$.

Let $t\in\mathbb N$ and let $A=(a_{ij})$ be a $t\times t$ matrix all whose entries are $0$ or $1$. 
We assume that every row of $A$ has a nonzero entry.
%for each $i\in\{1,\ldots,t\}$ there is $j\in\{1,\ldots,t\}$ such that $a_{ij}=1$. 
The subshift \[\Sigma_A=\{(x_i)_{i\in\mathbb Z}\in \{1,\ldots,t\}^{\mathbb Z }\colon a_{x_ix_{i+1}}=1\text{ for all }i\in\mathbb Z\}\] is called a  {\it Markov shift} or a {\it subshift of finite type} (SFT) determined by the transition matrix $A$. In the case $a_{ij}=1$ for all $i$, $j\in\{1,\ldots,t\}$, $\Sigma_A$ is called the {\it full shift} on $t$ symbols. Write $A^n=(a_{ij}^{(n)})$ for $n\in\mathbb N$. We say $\Sigma_A$ is {\it topologically mixing} if there exists $n\in\mathbb N$ such that $a_{ij}^{(n)}\neq0$ for all $i$, $j\in\{1,\ldots,t\}$.

\subsection{The Dyck-Motzkin shift}\label{dyck}
Let $M\geq2$, $N\geq0$ be integers
and let
 \[D_\alpha=\{\alpha_1,\ldots,\alpha_M\}\ \text{ and }\ D_\beta=\{\beta_1,\ldots,\beta_M\},\]
 which are interpreted as sets of left brackets and right brackets respectively: $\alpha_i$ and $\beta_i$ are in pair for $i=1,\ldots,M$.
The $(M,N)$ Dyck-Motzkin shift is a two-sided subshift over the finite alphabet 
\[D=D_\alpha\cup D_0\cup D_\beta,\]
which consists of $2M+N$ symbols where
 $\#D_0=N$.
 If $N=0$ then $D_0$ is an empty set. If $N\neq0$ then we write
$D_0=\{1_1,\ldots,1_N\}.$
Let $D^*$ denote the set of finite words in $D$.
Consider the monoid with zero, with $2M+N$ generators in $D$ and the unit element
$1$ with relations 
\[\alpha_i\cdot\beta_j=\delta_{i,j},\
0\cdot 0=0,\ 1_k\cdot1_\ell=1\ \text{ for } i,j\in\{1,\ldots,M\},\ k,\ell\in \{1,\ldots,N\},\] \[\omega\cdot 1= 1\cdot\omega=\omega,\
 \omega\cdot 0=0\cdot\omega=0\ 
\text{ for }\omega\in D^*\cup\{ 1\},\]
 \[\omega\cdot 1_\ell= 1_\ell\cdot\omega=\omega\ 
\text{ for }\omega\in D^*\text{ and }\ell\in \{1,\ldots,N\},\]
where $\delta_{i,j}$ denotes Kronecker's delta.
For $n\in\mathbb N$ and $\omega_1\cdots\omega_n\in D^*$ 
let
\[{\rm red}(\omega_1\cdots\omega_n)=\prod_{i=1}^n\omega_i.\]
The $(M,N)$ Dyck-Motzkin shift is defined by
\[\Sigma_{D}=\{x=(x_i)_{i\in\mathbb Z}\in D^{\mathbb Z}\colon {\rm red}(x_j\cdots x_k)\neq0\ \text{ for all }j,k\in\mathbb Z\text{ with }j<k\}.\]

Another way to define the $(M,N)$ Dyck Motzkin shift $\Sigma_D$ is the following. Consider a labeled directed graph that consists of infinitely many vertices $V_{i,j}$, $i=0,1,\ldots$, $j=1,\ldots,M^i$
together with edges each labeled with a unique symbol in $D$.
Each vertex $V_{i,j}$ has 
$M$ outgoing edges

 \[V_{i,j}\stackrel{\alpha_1}{\longrightarrow}V_{i+1,Mj-M+1},\ \ldots,\  V_{i,j}\stackrel{\alpha_{M-1}}{\longrightarrow}V_{i+1,Mj-1},\ V_{i,j}\stackrel{\alpha_M}{\longrightarrow}V_{i+1,Mj}\]
 and $M$ incoming edges
 \[V_{i,j}\stackrel{\beta_1}{\longleftarrow}V_{i+1,Mj-M+1},\ \ldots,\ V_{i,j}\stackrel{\beta_{M-1}}{\longleftarrow}V_{i+1,Mj-1},\ V_{i,j}\stackrel{\beta_M}{\longleftarrow}V_{i+1,Mj}.\]
 The bottom vertex $V_{0,1}$ has additional $M$ loop edges $V_{0,1}\stackrel{\beta_1,\ldots,\beta_M}{\longrightarrow}V_{0,1}$.
If $N\neq0$, then each vertex $V_{i,j}$ has additional
  $N$ loop edges
$V_{i,j}\stackrel{1_1,\ldots,1_N}{\longrightarrow}V_{i,j}$. 
Let $\Sigma_D^+$ denote the set of one-sided infinite sequences of elements of $D$ that are associated with the infinite labeled paths in this graph starting at $V_{0,1}$. Then $\Sigma_D$ is the invertible extension of $\Sigma_D^+$: 
\[\Sigma_{D}=\{x=(x_i)_{i\in\mathbb Z}\in D^{\mathbb Z}\colon x_jx_{j+1}\cdots\in\Sigma_D^+ \text{ for all }j\in\mathbb Z\}.\]

 \begin{figure}
\begin{center}
\includegraphics[height=8.5cm,width=13cm]
{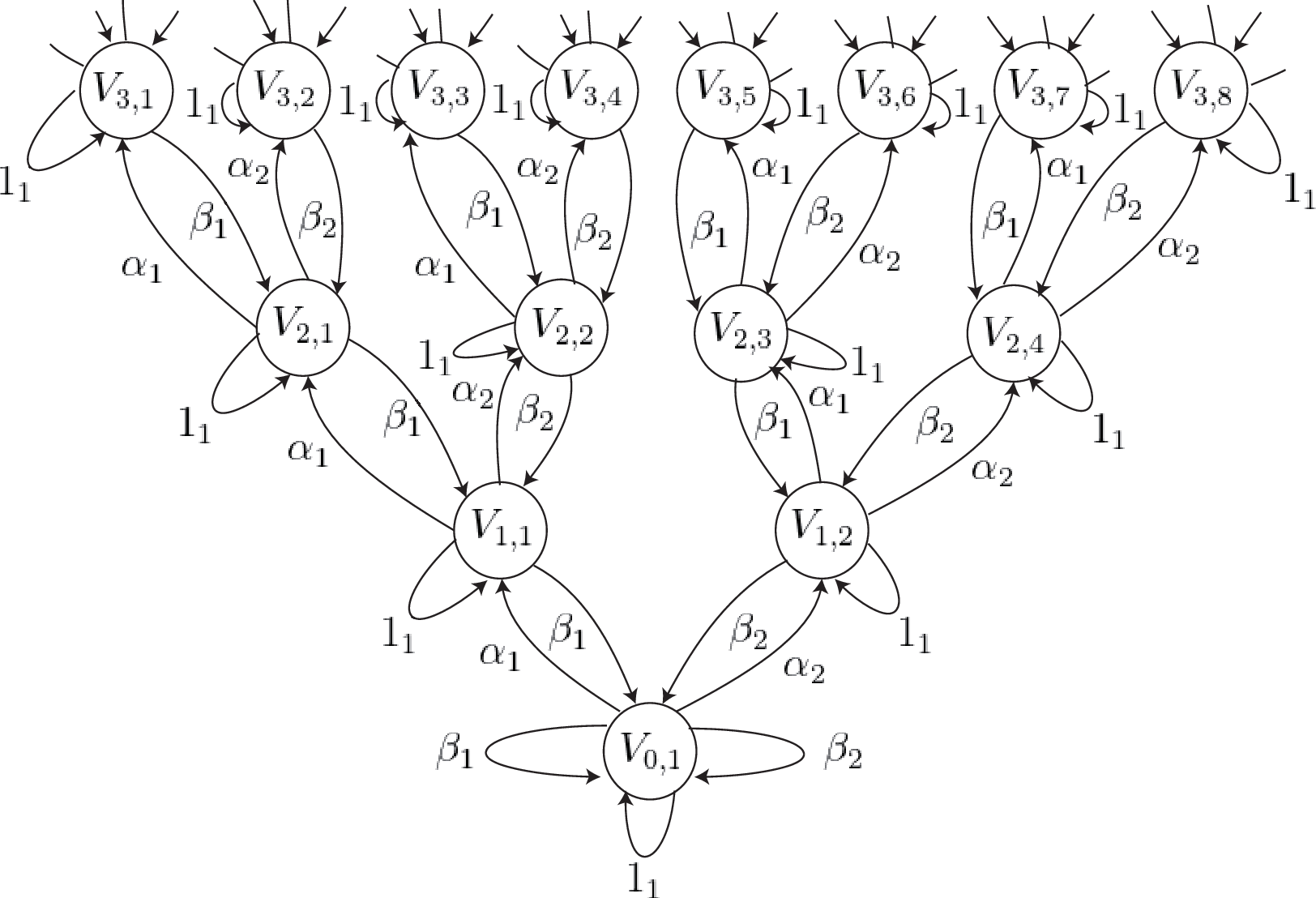}
\caption
{Part of the labeled directed graph associated with the $(2,1)$ Dyck-Motzkin shift. 
Each upward (resp. downward) edge is labeled with $\alpha_1$ or $\alpha_2$ (resp. $\beta_1$ or $\beta_2$).
Each vertex has one loop edge labeled with $1_1$.}\label{Motzkin-diagram}
\end{center}
\end{figure}

Part of the labeled directed graph associated with the $(2,1)$ Dyck-Motzkin shift is shown in \textsc{Figure}~\ref{Motzkin-diagram}. The set of admissible words coincides with the set of strings of labels associated with the finite paths in the graph starting at $V_{0,1}$.
Removing all the loop edges labeled with $1_1$ gives part of 
the labeled directed graph associated with the $(2,0)$ Dyck-Motzkin shift.
It is easy to see that  $\Sigma_D$ has infinitely many forbidden words, and so it is not a Markov shift.

    \subsection{Classification of ergodic measures }\label{iso-sec}
    For each $j\in\mathbb Z$ define $G_j\colon \Sigma_D\to\{-1,0,1\}$ by \[G_j(x)=\sum_{k=1}^{M}(\delta_{{\alpha_k},x_j}-\delta_{\beta_k,x_j}).\]
    We have $G_j(x)=1$ if $x_j\in D_\alpha$, $G_j(x)=0$ if $x_j\in D_0$, $G_j(x)=-1$ if $x_j\in D_\beta$.
    For each $i\in\mathbb Z$ define $H_i\colon \Sigma_D\to\mathbb Z$ by
      \[H_i(x)=\begin{cases}\sum_{j=0}^{i-1} G_j(x)&\text{ for }  i\geq1,\\-\sum_{j=i}^{-1} G_j(x)&\text{ for } i\leq -1,\\
      0&\text{ for }i=0.\end{cases}\]
      The function $H_i$ for $i\geq1$ (resp. $i\leq-1$) counts the difference between the number of symbols in $D_\alpha$ and that in $D_\beta$  appearing in $x_0\cdots x_{i-1}$ (resp. $x_i\cdots x_{-1}$).
For $i$, $j\in\mathbb Z$ define \[\{H_i=H_j\}=\{x\in\Sigma_D\colon H_i(x)=H_j(x)\}.\]
For $i,j\in\mathbb Z$ with $i<j$, we have
$H_i(x)=H_j(x)$ if and only if the number of symbols in $D_\alpha$ that appear in $x_i\cdots x_{j-1}$
equals the number of symbols in $D_\beta$ that appear in $x_i\cdots x_{j-1}$. 
%(So, if $N=0$ and $j-i$ is odd then $\{H_i=H_j\}=\emptyset$). 
In particular, if ${\rm red}(x_i\cdots x_{j-1})=1$ then $H_i(x)=H_j(x)$ holds. 
If $x\in\Sigma_D$,
$i<j$ and $H_i(x)=H_j(x)$ then $i+1<j$ holds. If moreover 
$x_i\in D_\alpha$ (resp. $x_{j-1}\in D_\beta$), then there exists $k\in\{i+1,\ldots,j-1\}$ 
(resp. $k\in\{i,\ldots,j-2\}$)
such that the left bracket at 
position $i$ (resp. the right bracket at position $j-1$) in $x$ is closed with the corresponding right (resp. left) bracket at position $k$ in $x$:
${\rm red}(x_i\cdots x_{k})=1$
and ${\rm red}(x_i\cdots x_{\ell})\neq1$ for all $\ell\in\{i,\ldots,k-1\}$ (resp. ${\rm red}(x_k\cdots x_{j-1})=1$
and ${\rm red}(x_\ell\cdots x_{j-1})\neq1$ for all $\ell\in\{k+1,\ldots,j-1\}$).

We introduce three pairwise disjoint shift-invariant Borel sets
    \[\label{3-sets}\begin{split}A_0&=\bigcap_{i=-\infty }^{\infty}\left(\left(\bigcup_{j=1}^\infty\{ H_{i+j}=H_i\}\right)\cap\left(\bigcup_{j=1}^\infty\{ H_{i-j}=H_i\}\right)\right),\\
A_\alpha&=\left\{x\in\Sigma_D\colon
\lim_{i\to\infty}H_i(x)
=\infty\ \text{ and } \ \lim_{i\to-\infty}H_i(x)=-\infty\right\},\\
A_\beta&=\left\{x\in\Sigma_D\colon
\lim_{i\to\infty}H_i(x)=-\infty\ \text{ and } \
\lim_{i\to-\infty}H_i(x)=\infty\right\}.\end{split}\]
Note that all the three sets are dense in $\Sigma_D$. The next lemma classifies elements of the set $M^{\rm e}(\Sigma_D)$ of shift-invariant ergodic measures on $\Sigma_D$.

\begin{lemma}
\label{trichotomy}
If $\mu\in M^{\rm e}(\Sigma_D)$, then either $\mu(A_0)=1$, 
$\mu(A_\alpha)=1$ or $\mu(A_\beta)=1$.
\end{lemma}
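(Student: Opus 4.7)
My plan is to exploit the fact that the three sets $A_0,A_\alpha,A_\beta$ are pairwise disjoint shift-invariant Borel sets, so by ergodicity of $\mu$ each has $\mu$-measure $0$ or $1$; it therefore suffices to prove $\mu(A_0\cup A_\alpha\cup A_\beta)=1$. The central observation is that $H_n$ is a Birkhoff sum: for $n\geq 1$,
\[
H_n(x)=\sum_{j=0}^{n-1}G_j(x)=\sum_{j=0}^{n-1}G_0(\sigma^j x),
\]
where $G_0\colon\Sigma_D\to\{-1,0,1\}$ is bounded and measurable. Applying Birkhoff's ergodic theorem to $(\mu,\sigma)$ and $(\mu,\sigma^{-1})$, for $\mu$-a.e.\ $x$,
\[
\frac{H_n(x)}{n}\xrightarrow[n\to\infty]{}c\quad\text{and}\quad\frac{H_{-n}(x)}{n}\xrightarrow[n\to\infty]{}-c,\quad c:=\int G_0\,{\rm d}\mu.
\]

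If $c>0$, then $H_i(x)\to+\infty$ as $i\to\infty$ and $H_i(x)\to-\infty$ as $i\to-\infty$ a.s., so $\mu(A_\alpha)=1$. The case $c<0$ is symmetric and yields $\mu(A_\beta)=1$. These two cases are immediate from the ergodic theorem.

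The main obstacle is the zero-mean case $c=0$, where Birkhoff alone only says $H_n/n\to 0$ and does not preclude $H_n$ drifting to $\pm\infty$ sublinearly. To rule this out I will invoke an Atkinson-type recurrence theorem: for an ergodic measure-preserving system and an $L^1$ observable of zero mean, the Birkhoff sums satisfy $\liminf_{n\to\infty}|S_n\phi|=0$ almost surely. Applied to $\phi=G_0$, and using that $H_n$ is integer-valued, this gives $H_n(x)=0$ for infinitely many $n\geq 1$ a.s., and the same argument under $\sigma^{-1}$ gives $H_{-n}(x)=0$ for infinitely many $n\geq 1$ a.s. Call this full-measure set $B$.

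To conclude $\mu(A_0)=1$, I will use the cocycle identity $H_j(\sigma^i x)=H_{j+i}(x)-H_i(x)$ valid for all $i\in\mathbb Z$ and $j\geq 1$ (and a symmetric one for $j\leq -1$). Under this identity, the statement ``$\sigma^i x\in B$'' is exactly the $i$th factor in the intersection defining $A_0$, namely the existence of $j\geq 1$ with $H_{i+j}(x)=H_i(x)$ and of $j\geq 1$ with $H_{i-j}(x)=H_i(x)$. Since $\mu$ is $\sigma$-invariant, $\mu(\sigma^{-i}B)=1$ for every $i\in\mathbb Z$, and a countable intersection over $i\in\mathbb Z$ shows $\mu(A_0)=1$. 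The delicate step is the recurrence claim in the $c=0$ case; everything else reduces to the ergodic theorem and the pigeonhole of the integer-valued cocycle.
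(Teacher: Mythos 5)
Your proof is correct, but it takes a genuinely different route from the paper. The paper never looks at the mean of $G_0$: it introduces $l(x)$ (the largest position of a left bracket that is never matched to the right, i.e.\ $x_i\in D_\alpha$ with $H_{i+j}\neq H_i$ for all $j\geq1$) and $r(x)$ (the smallest position of a right bracket never matched to the left), observes that on the complement of $A_0\cup A_\alpha\cup A_\beta$ both are finite, and that the level sets $A_{i,j}=\{l=i,\ r=j\}$ satisfy $\sigma A_{i,j}=A_{i-1,j-1}$; being pairwise disjoint and carried onto one another by the shift, each $A_{i,j}$ is null for \emph{every} invariant measure, so the complement is null and ergodicity plus invariance of the three sets gives the trichotomy. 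You instead read $H_n$ as the Birkhoff sum of $G_0$, settle the cases $c=\int G_0\,{\rm d}\mu\neq0$ directly by the ergodic theorem (applied to $\sigma$ and $\sigma^{-1}$), and handle $c=0$ with Atkinson's recurrence theorem for zero-mean ergodic $L^1$ cocycles, using integer-valuedness to get exact returns and the cocycle identity $H_{i+j}(x)=H_i(x)+H_j(\sigma^i x)$ plus shift-invariance to upgrade returns at the origin to returns at every index; all of these steps are sound. The trade-off: your argument imports a nontrivial external theorem (Atkinson) and uses ergodicity essentially, but it yields more information, namely that which of $A_0$, $A_\alpha$, $A_\beta$ carries $\mu$ is decided by the sign of the drift $\int G_0\,{\rm d}\mu$; the paper's argument is self-contained, needs no ergodic theorem, and shows the stronger statement that $\Sigma_D\setminus(A_0\cup A_\alpha\cup A_\beta)$ is null for every (not necessarily ergodic) invariant measure.
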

\begin{proof}
For the Dyck shift, the statement had been proved in 
\cite[pp.102--103]{Kri74}.
We treat the Dyck-Motzkin shift with a simpler argument.
%Let $D$ denote the set of all $x\in \Sigma_D$ such that there is $I=I(x)\in\mathbb Z$ with $H_I(x)>H_I(x)$ \textcolor{blue}{($H_I(x)>H_{I-p}$?)} and $H_I(x)\geq H_{I+p}(x)$, $p\in\mathbb N$. We claim that every $\mu\in M(\Sigma_D,\sigma)$ assigns measure $0$ to $D$. Indeed, we have $\sigma^i\{x\in D\colon I(x)=0\}=\{x\in D\colon I(x)=i\}$, $i\in\mathbb Z$, and if we had $\mu(D)>0$, then we would have the contradiction that the disjoint sets $\{x\in D\colon I(x)=i\}$ had equal and positive measure. In the same way we see that every $\mu\in M(\Sigma_D,\sigma)$ assigns measure $0$ to the set of $x\in \Sigma_D$ for which there is $I\in\mathbb Z$ such that one of the following conditions is satisfied: \[\begin{split}&H_I(x)\geq H_{I-p}(x)\ \text{ and } \ H_I(x)>H_{I+p}(x),\ p\in\mathbb N,\\ &H_I(x)< H_{I-p}(x)\ \text{ and } \ H_I(x)\leq H_{I+p}(x),\ p\in\mathbb N,\\ &H_I(x)\leq H_{I-p}(x)\ \text{ and } \ H_I(x)<H_{I+p}(x),\ p\in\mathbb N.\end{split}\] Hence the assertion of the lemma holds.
%(Another proof based on the proof appeared in Climenhaga's math blog).
With the notation in Section~\ref{s-w-l}, for $x\in\Sigma_D$ let
%$l(x):=\sup\{i\in\mathbb{Z}: x_i\text{ is negative and open (has no positive pair)}\}$
\[l(x)=\sup\left\{i\in\mathbb{Z}\colon
x\in\bigcup_{k=1}^M\left(\Sigma_{D}(i;\alpha_k)\cap \bigcup_{j=1}^\infty\{H_{i+j}=H_i\}^c\right)\right\},\]
and %$r(x):=\sup\{i\in\mathbb{Z}:x_i\text{ is positive and open}\}$.
\[r(x)=\inf\left\{i\in\mathbb{Z}\colon
x\in\bigcup_{k=1}^M\left(\Sigma_{D}(i;\beta_k)\cap \bigcup_{j=1}^\infty\{H_{i-j+1}=H_i\}^c\right)\right\},\]
where the upper indices $c$ denote the complement in $\Sigma_D$.
For $i,j\in\mathbb Z$ let
\[A_{i,j}=\{x\in\Sigma_D\colon l(x)=i,\ r(x)=j\}.\]
If $x\in \Sigma_D\setminus (A_0\cup A_{\alpha}\cup A_{\beta})$, then both $l(x)$ and
$r(x)$ are finite. Hence we have
\[\Sigma_D\setminus (A_0\cup A_{\alpha}\cup A_{\beta})\subset\bigcup_{i,j\in\mathbb{Z}}A_{i,j}.\]
For
all $i,j\in\mathbb{Z}$ we have $\sigma A_{i,j}=A_{i-1,j-1}$, and so
$A_{i,j}=A_{0,j-i}\neq\emptyset$.
%$\mu(A_{i,j})=\mu(A_{0,j-i})$ for any $i,j\in\mathbb{Z}$.
If $\mu\in M(\Sigma_D)$ then we have $\mu(A_{i,j})=0$. Since
$A_0$, $A_\alpha$, $A_\beta$ are shift-invariant, if $\mu$ is ergodic then
either $\mu(A_0)=1$, 
$\mu(A_\alpha)=1$ or $\mu(A_\beta)=1$.
\end{proof}

%The next lemma describes a relationship between $\{H_i\}_{i\in\mathbb Z}$ and relative frequencies of positive/negative symbols, and an analogous relationship for $\{H_{\gamma,i}\}_{i\in\mathbb Z}$. \begin{lemma}\label{diff-H}Let $j,k\in\mathbb Z$ satisfy $j<k$. \ \begin{itemize}    \item[(a)] Let $x\in\Sigma_D$. Then $H_j(x)=H_k(x)$ if and only if $k-j-@$ is even and \[{\rm card}\{i\in\{j,\ldots,k-1\}\colon x_i\in\{\alpha_1,\ldots,\alpha_M\}\}=\frac{1}{2}(k-j-@).\] \item[(b)] Let $y\in\Sigma_\alpha$. Then $H_{\alpha,j}(y)=H_{\alpha,k}(y)$ if and only if $k-j-@$ is even and \[{\rm card}\{i\in\{j,\ldots,k-1\}\colon y_i=\beta\}=\frac{1}{2}(k-j-@).\] \item[(c)] Let $y\in\Sigma_\beta$. Then $H_{\beta,j}(y)=H_{\beta,k}(y)$ if and only if $k-j-@$ is even and \[{\rm card}\{i\in\{j,\ldots,k-1\}\colon y_i=\alpha \}=\frac{1}{2}(k-j-@).\]\end{itemize} where $@$ denotes the number of neutral symbols in $\omega_j\cdots\omega_{k-1}$.\end{lemma}\begin{proof} Apply the definitions to each of the cases $0\leq j<k$, $j<k\leq0$, $j<0\leq k$ and compute.\end{proof}

\subsection{Construction of embeddings of the full shift}\label{DM-str}
We introduce two full shift spaces on $M+N+1$ symbols over different sub-alphabets of $D$:
  \[\Sigma_\alpha=(D_\alpha\cup D_0\cup\{\beta\})^{\mathbb Z}\quad\text{and}\quad\Sigma_\beta=(\{\alpha\}\cup D_0 \cup D_\beta)^{\mathbb Z}.\]
   Let $\sigma_\alpha$, $\sigma_\beta$ denote the left shifts acting on $\Sigma_\alpha$, $\Sigma_\beta$ respectively.
% The collection  $\{\Sigma(-j;\omega_1\cdots\omega_{2j+1})\colon j\geq0  \omega_1\cdots\omega_{2j+1}\in \mathcal L(\Sigma)\setminus\{\emptyset\}\}$  is a base of the topology on $\Sigma$. 
%With this notation,
With the notation in Section~\ref{s-w-l} we introduce two shift-invariant Borel sets of $\Sigma_D$:
\[\begin{split}B_\alpha&=\bigcap_{i=-\infty}^\infty\bigcup_{k=1}^M\bigcup_{\ell=1}^N\left(\Sigma_{D}(i;\alpha_k)\cup\Sigma_{D}(i;1_\ell)\cup\left(\Sigma_{D}(i;\beta_k)\cap\bigcup_{j=1}^\infty\{ H_{i-j+1}=H_{i+1}\}\right)\right),\\
B_\beta&=\bigcap_{i=-\infty}^\infty\bigcup_{k=1}^M\bigcup_{\ell=1}^N\left(\Sigma_{D}(i;\beta_k)\cup\Sigma_{D}(i;1_k)
\cup\left(\Sigma_{D}(i;\alpha_k)\cap \bigcup_{j=1}^\infty\{H_{i+j}=H_i\}\right)\right).\end{split}\]
%Note that $B_\alpha\supset\{\alpha_1,\ldots,\alpha_M,\varsigma_1,\ldots,\varsigma_N\}^\mathbb Z$ and $B_\beta\supset\{\beta_1,\ldots,\beta_M,\varsigma_1,\ldots,\varsigma_N\}^\mathbb Z$.
 The set $B_\alpha$ (resp. $B_\beta$)
is precisely the set of sequences in $\Sigma_D$ such that any right (resp. left) bracket in the sequence is closed.
One can check that
\[A_0\cup A_\alpha\subset B_\alpha\ \text{ and }\ A_0\cup A_\beta\subset B_\beta.\]

Define %\footnote{In \cite{Kri74}, this map for the Dyck shift was defined only on $B_\alpha$. }  %$\phi_\alpha\colon B_\alpha\to \Sigma_\alpha$
 $\phi_\alpha\colon \Sigma_D\to \Sigma_\alpha$ 
by
\[(\phi_\alpha(x))_i=\begin{cases}
  \beta&\text{ if }x_i\in  D_\beta,\\
 x_i &\text{ otherwise.}
\end{cases}\]
%\[(\phi_\alpha(x))_i=\begin{cases} \alpha_k&\text{ if }x_i=\alpha_k,\ k\in\{1,\ldots,M\},\\  \beta&\text{ if }x_i\in\{\beta_1,\ldots,\beta_M\},\\  1_\ell&\text{ if }x_i=1_\ell,\ \ell\in\{1,\ldots,N\}.\end{cases}\]
In other words, $\phi_\alpha(x)$ is obtained by replacing all $\beta_k$, $k\in\{1,\ldots,M\}$ in $x$ by $\beta$.
Clearly $\phi_\alpha$ is continuous, not one-to-one.
Similarly, define %\footnote{In \cite{Kri74}, this map for the Dyck shift was defined only on $B_\beta$. } 
$\phi_\beta\colon \Sigma_D\to \Sigma_\beta$ by
\[(\phi_\beta(x))_i=\begin{cases}
    \alpha&\text{ if }x_i\in D_\alpha,\\
    x_i&\text{ otherwise.}
\end{cases}\]
%\[(\phi_\beta(x))_i=\begin{cases}   \alpha&\text{ if }x_i\in\{\alpha_1,\ldots,\alpha_M\},\\    \beta_j&\text{ if }x_i=\beta_j,\ j\in\{1,\ldots,M\},\\  1_\ell&\text{ if }x_i=1_\ell,\ \ell\in\{1,\ldots,N\}.\end{cases}\]
In other words, $\phi_\beta(x)$ is obtained by replacing all $\alpha_k$, $k\in\{1,\ldots,M\}$ in $x$ by $\alpha$.
 Clearly $\phi_\beta$ is continuous, not one-to-one. 
 We set
\[K_\alpha=\phi_\alpha(B_\alpha)\ \text{ and }\ K_\beta=\phi_\beta(B_\beta).\]

%We will show that $\phi_\alpha|_{B_\alpha}$ and $\phi_\beta|_{B_\beta}$ are homeomorphisms onto their images.
    %  The statements for the Dyck shift were essentially proved in \cite[Section~4]{Kri74}. We extend the proofs there to the Dyck-Motzkin shift.
    %To this end, 

For each $j\in\mathbb Z$ define $G_{\alpha,j}\colon \Sigma_\alpha\to\{-1,0,1\}$ by \[G_{\alpha,j}(x)=\sum_{k=1}^{M}(\delta_{{\alpha_k},x_j}-\delta_{\beta,x_j}).\]
%We have $G_{\alpha,j}(x)=1$ if $x_j\in D_\alpha$, $G_{\alpha,j}(x)=0$ if $x_j\in D_0$, $G_{\alpha,j}(x)=-1$ if $x_j=\beta$.
    For each $i\in\mathbb Z$ define $H_{\alpha,i}\colon \Sigma_D\to\mathbb Z$ by
      \[H_{\alpha,i}(x)=\begin{cases}\sum_{j=0}^{i-1} G_{\alpha,j}(x)&\text{ for }  i\geq1,\\-\sum_{j=i}^{-1} G_{\alpha,j}(x)&\text{ for } i\leq -1,\\
      0&\text{ for }i=0.\end{cases}\]
      The function $H_{\alpha,i}$ for $i\geq1$ (resp. $i\leq-1$) counts the difference between the number of symbols in $D_\alpha$ and that of $\beta$ appearing in $x_0\cdots x_{i-1}$ (resp. $x_i\cdots x_{-1}$).
We define $\psi_\alpha\colon K_\alpha\to D^\mathbb Z$ by
\[(\psi_\alpha(y))_i=\begin{cases}
  \beta_k&\text{ if }y_i=\beta,\ y_{s_\alpha(i,y)}=
  \alpha_k,\ k\in\{1,\ldots,M\},\\
   y_i&\text{ otherwise,}
\end{cases}\]
%\[(\psi_\alpha(y))_i=\begin{cases} \alpha_k&\text{ if }y_i=\alpha_k,\ k\in\{1,\ldots,M\},\\ \beta_k&\text{ if }y_i=\beta,\ y_{s_\alpha(i,y)}=  \alpha_k,\ k\in\{1,\ldots,M\},\\   1_\ell&\text{ if }y_i=1_\ell,\ \ell\in\{1,\ldots,N\},\end{cases}\]
where \[s_\alpha(i,y)=\max\{j<i+1\colon  H_{\alpha,j}(y)= H_{\alpha,i+1}(y)\}.\]
Clearly $\psi_\alpha$ is continuous.

Similarly,  for each $j\in\mathbb Z$ define $G_{\beta,j}\colon \Sigma_\beta\to\{-1,0,1\}$ by \[G_{\beta,j}(x)=\sum_{k=1}^{M}(\delta_{{\alpha},x_j}-\delta_{\beta_k,x_j}).\]
    For each $i\in\mathbb Z$ define $H_{\beta,i}\colon \Sigma_D\to\mathbb Z$ by
      \[H_{\beta,i}(x)=\begin{cases}\sum_{j=0}^{i-1} G_{\beta,j}(x)&\text{ for }  i\geq1,\\-\sum_{j=i}^{-1} G_{\beta,j}(x)&\text{ for } i\leq -1,\\
      0&\text{ for }i=0.\end{cases}\]
We define $\psi_\beta\colon K_\beta\to D^\mathbb Z$ by 
\[(\psi_\beta(y))_i=\begin{cases}
   \alpha_k&\text{ if }y_i=\alpha,\ y_{s_\beta(i,y)}=\beta_k,\ k\in\{1,\ldots,M\},\\
    y_i&\text{ otherwise, }
\end{cases}\]
where \[s_\beta(i,y)=\min\{j>i\colon  H_{\beta,j}(y)= H_{\beta,i}(y)\}.\]
Clearly $\psi_\beta$ is continuous too.

\begin{lemma}\label{include-lem}
For each $\gamma\in\{\alpha,\beta\}$
%there exists a map $\psi_\gamma\colon K_\gamma\to B_\gamma$ such that 
the following statements hold: 
\begin{itemize}
\item[(a)] 
$\psi_\gamma(K_\gamma)=B_\gamma$, and $\psi_\gamma$ is a homeomorphism whose inverse is $\phi_\gamma|_{B_\gamma}$.
% $\phi_\gamma|_{B_\gamma}\colon B_\gamma\to K_\gamma$ is a homeomorphism whose inverse is $\psi_\gamma$.
\item[(b)] 
$\phi_\gamma\circ\sigma|_{B_\gamma}=\sigma_\gamma\circ\phi_\gamma|_{B_\gamma}$ and $\sigma^{-1}\circ\psi_\gamma=\psi_\gamma\circ\sigma_\gamma^{-1}|_{K_\gamma }$.
\end{itemize}
\end{lemma}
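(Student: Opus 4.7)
My plan is to focus on the case $\gamma = \alpha$; the case $\gamma = \beta$ is handled symmetrically by reversing time orientation. Both $\phi_\alpha$ and $\psi_\alpha$ are already known to be continuous by construction, so for (a) the task reduces to exhibiting them as mutual inverses between $B_\alpha$ and $K_\alpha$: once this is established, $\psi_\alpha(K_\alpha) = B_\alpha$ is automatic (since $K_\alpha = \phi_\alpha(B_\alpha)$ by definition), and the homeomorphism property follows from continuity in both directions. The direction $\phi_\alpha \circ \psi_\alpha = \mathrm{id}_{K_\alpha}$ is a routine coordinate-by-coordinate check, because the only modifications $\psi_\alpha$ makes are $\beta \mapsto \beta_k$ and these are flattened back by $\phi_\alpha$.

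The substance of (a) lies in proving $\psi_\alpha \circ \phi_\alpha|_{B_\alpha} = \mathrm{id}_{B_\alpha}$. For $x \in B_\alpha$ and coordinates $i$ with $x_i \in D_\alpha \cup D_0$ this is immediate. The delicate case is $x_i = \beta_k$. Setting $y = \phi_\alpha(x)$, the identity $G_j(x) = G_{\alpha,j}(y)$ for every $j$ gives $H_j(x) = H_{\alpha,j}(y)$, and hence
\[s_\alpha(i,y) = \max\{j \leq i : H_j(x) = H_{i+1}(x)\}.\]
Because $x \in B_\alpha$, the right bracket $x_i = \beta_k$ is closed, so there is a unique position $m^* \leq i-1$ with $x_{m^*} = \alpha_k$, $\mathrm{red}(x_{m^*} \cdots x_i) = 1$, and $\mathrm{red}(x_{m^*} \cdots x_\ell) \neq 1$ for $\ell \in \{m^*, \ldots, i-1\}$; the first relation yields $H_{m^*}(x) = H_{i+1}(x)$, and therefore $m^* \leq s_\alpha(i,y)$. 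For the reverse inequality, suppose for contradiction that $m^* < j' \leq i$ satisfies $H_{j'}(x) = H_{i+1}(x)$; then $H_{m^*}(x) = H_{j'}(x)$, and the structural observation in Section~\ref{iso-sec} applied with $i \mapsto m^*$, $j \mapsto j'$, $x_{m^*} \in D_\alpha$ produces a matching right bracket for $x_{m^*}$ at some position in $\{m^*+1,\ldots,j'-1\} \subset \{m^*+1,\ldots,i-1\}$, contradicting the uniqueness of $i$ as the matching position of $x_{m^*}$. Hence $s_\alpha(i,y) = m^*$ and $(\psi_\alpha(y))_i = \beta_k = x_i$.

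For (b), the first identity is a direct $1$-block code computation: $\phi_\alpha \circ \sigma = \sigma_\alpha \circ \phi_\alpha$ holds pointwise on $\Sigma_D$, and $B_\alpha$ is $\sigma$-invariant by its intersection-based definition, so the identity restricts to $B_\alpha$. The second identity is then a diagram chase from (a): for $y = \phi_\alpha(x) \in K_\alpha$ with $x \in B_\alpha$ (so that $\sigma^{-1}x \in B_\alpha$),
\[\psi_\alpha \circ \sigma_\alpha^{-1}(y) = \psi_\alpha \circ \sigma_\alpha^{-1} \circ \phi_\alpha(x) = \psi_\alpha \circ \phi_\alpha \circ \sigma^{-1}(x) = \sigma^{-1}(x) = \sigma^{-1} \circ \psi_\alpha(y).\]
The main obstacle is the maximality argument pinning down $s_\alpha(i, \phi_\alpha(x))$ as the position of the matching left bracket; it relies crucially on the nesting structure of the Dyck-Motzkin shift encoded in the preliminary observations about $H_i = H_j$.
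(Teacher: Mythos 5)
Your proof is correct, and it fills in the details behind the paper's terse ``it is straightforward to check.'' The key step is the same in spirit: the reduction $\psi_\alpha\circ\phi_\alpha|_{B_\alpha}=\mathrm{id}_{B_\alpha}$ comes down to showing $s_\alpha(i,\phi_\alpha(x))$ recovers the position of the matching left bracket when $x_i\in D_\beta$, and your argument via $H_j(x)=H_{\alpha,j}(\phi_\alpha(x))$ together with the matching-bracket observation in Section~2.3 does exactly this; the only small thing worth tightening is the passage from the backward-matching condition that the paper's observation states (\(\mathrm{red}(x_\ell\cdots x_i)\neq1\) for $\ell>m^*$) to the forward-matching condition you invoke (\(\mathrm{red}(x_{m^*}\cdots x_\ell)\neq1\) for $\ell<i$), but these are readily seen to be equivalent by the multiplicativity of $\mathrm{red}$, as you implicitly use in the contradiction step.
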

\begin{proof}  
For each $\gamma\in\{\alpha,\beta\}$,
it is straightforward to check that 
$\psi_\gamma\circ\phi_\gamma(x)=x$ for all $x\in B_\gamma$, and
$\phi_\gamma\circ\psi_\gamma(y)=y$
for all $y\in K_\gamma$, which verifies (a). A proof of (b) is also straightforward.\end{proof}

\subsection{Transport of ergodic measures}\label{transport-esc}By Lemma~\ref{include-lem},
if $\gamma\in\{\alpha,\beta\}$ and $\nu\in M(\Sigma_\gamma)$ satisfies $\nu(K_\gamma)=1$, then $\nu$ can be transported to a shift-invariant measure on $\Sigma_D$.
The lemma below gives a sufficient condition for measures in $M^{\rm e}(\Sigma_\gamma)$ to have measure $1$ on $K_\gamma$.
For each $\gamma\in\{\alpha,\beta\}$ we set
\[\Sigma_\gamma(\gamma)=\{\omega\in\Sigma_\gamma\colon \omega_0\in D_\gamma\} \ \text{ and }\ \Sigma_\gamma(0)=\{\omega\in\Sigma_\gamma\colon \omega_0\in D_0\},\]
and define $E_\gamma\colon\Sigma_\gamma\to\mathbb R$ by
\[E_\gamma=2\cdot \mathbbm{1}_{\Sigma_\gamma(\gamma) }+\mathbbm{1}_{\Sigma_\gamma(0)},\]
where $\mathbbm{1}_{(\cdot)}$ denotes the indicator function. Note that $E_\gamma$ is continuous. If $N=0$ then
$E_\gamma=2\cdot \mathbbm{1}_{\Sigma_\gamma(\gamma) }$.
If $\nu\in M(\Sigma_\alpha)$ and 
    $\int E_\alpha{\rm d}\nu>1$
    (resp. $\nu\in M(\Sigma_\beta)$ and 
    $\int E_\beta{\rm d}\nu>1$),
 then $\nu$ gives more mass to the union of cylinders corresponding to the symbols in $D_\alpha$ (resp. $D_\beta$) than the cylinder corresponding to $\beta$ (resp. $\alpha$).

\begin{lemma}\label{include-lem2} For each $\gamma\in\{\alpha,\beta\}$ the following statements hold: \begin{itemize}
    \item[(a)] If $\nu\in M^{\rm e}(\Sigma_\gamma)$ and 
    $\int E_\gamma{\rm d}\nu>1$
    then $\nu(K_\gamma )=1$.
    \item[(b)] $K_\gamma$ is a dense subset of $\Sigma_\gamma$.\end{itemize}\end{lemma}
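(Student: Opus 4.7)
The plan is to prove (a) by combining Birkhoff's ergodic theorem with the integer-valued intermediate-value property of $H_\gamma$, and (b) by producing explicit periodic points lying in $K_\gamma$. I treat the case $\gamma=\alpha$; the case $\gamma=\beta$ is entirely dual under time reversal.

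First, the hypothesis $\int E_\alpha\,d\nu>1$ is equivalent to $\int G_{\alpha,0}\,d\nu>0$: from $\nu(\Sigma_\alpha(\alpha))+\nu(\Sigma_\alpha(0))+\nu(\{y_0=\beta\})=1$ one gets
\[\int E_\alpha\,d\nu=2\nu(\Sigma_\alpha(\alpha))+\nu(\Sigma_\alpha(0))=1+\nu(\Sigma_\alpha(\alpha))-\nu(\{y_0=\beta\})=1+\int G_{\alpha,0}\,d\nu.\]
Applying Birkhoff's ergodic theorem to the ergodic measure $\nu$ and the observable $G_{\alpha,0}$ yields, for $\nu$-a.e. $y$, the drifts $H_{\alpha,n}(y)\to+\infty$ and $H_{\alpha,-n}(y)\to-\infty$ as $n\to\infty$. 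Since $G_{\alpha,j}\in\{-1,0,1\}$, the integer-valued sequence $j\mapsto H_{\alpha,j}(y)$ changes by at most $1$ between consecutive integers, so by an elementary intermediate-value argument, for every $i\in\mathbb Z$ with $y_i=\beta$ the set $\{j<i+1\colon H_{\alpha,j}(y)=H_{\alpha,i+1}(y)\}$ is non-empty; hence $s_\alpha(i,y)$ is attained. The formula defining $\psi_\alpha$ in Section~\ref{DM-str} therefore unambiguously produces a sequence $x\in D^{\mathbb Z}$ with $\phi_\alpha(x)=y$, in which every right bracket at position $i$ is paired with the left bracket at position $s_\alpha(i,y)$. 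It remains to establish $x\in\Sigma_D$, for then $x\in B_\alpha$ and $y\in\phi_\alpha(B_\alpha)=K_\alpha$, giving $\nu(K_\alpha)=1$.

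Admissibility of $x$ reduces to the non-crossing property of the pairings $\{(s_\alpha(i,y),i)\colon y_i=\beta\}$: one must rule out configurations $s_1<s_2<i_1<i_2$ with $s_r=s_\alpha(i_r,y)$. Setting $a=H_{\alpha,s_1}(y)=H_{\alpha,i_1+1}(y)$ and $b=H_{\alpha,s_2}(y)=H_{\alpha,i_2+1}(y)$, one first checks $a\neq b$; in the case $b>a$, following $H_\alpha$ from $s_2+1$ (value $b+1$) down to $i_1+1$ (value $a$) and invoking the step-size bound produces $j^\ast\in(s_2,i_1+1)\subset(s_2,i_2+1)$ with $H_{\alpha,j^\ast}(y)=b$, contradicting the maximality in $s_2=s_\alpha(i_2,y)$; the case $b<a$ is symmetric on $(s_1,s_2)$, contradicting the maximality in $s_1=s_\alpha(i_1,y)$. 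Non-crossing then guarantees that every finite window $x_j\cdots x_k$ reduces without producing a symbol mismatch, so ${\rm red}(x_j\cdots x_k)\neq0$ and $x\in\Sigma_D$.

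For part (b), given any non-empty cylinder in $\Sigma_\alpha$ specified by an admissible word $\omega$, set $\omega'=\omega\alpha_1^k$ with $k$ strictly greater than the number of occurrences of $\beta$ in $\omega$, and let $y\in\Sigma_\alpha$ be the periodic point with one period $\omega'$ placed so that $\omega$ occurs at the prescribed position. Periodicity and the choice of $k$ give
\[H_{\alpha,-n|\omega'|}(y)=-n\sum_{j=0}^{|\omega'|-1}G_{\alpha,j}(y)\to-\infty\quad\text{as }n\to\infty,\]
since the one-period sum is strictly positive, and the second paragraph above then yields $y\in K_\alpha$. Thus the cylinder meets $K_\alpha$; the construction for $\gamma=\beta$ is symmetric, appending copies of $\beta_1$. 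The main obstacle is the non-crossing verification in part (a): without it the otherwise combinatorial definition of $\psi_\alpha$ could in principle produce a sequence in $D^{\mathbb Z}\setminus\Sigma_D$, and the full strength of both the step-size bound on $H_\alpha$ and the extremality built into $s_\alpha$ is essential.
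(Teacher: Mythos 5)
Your proof is correct, and it diverges from the paper's in instructive ways. For part (a) the core mechanism is the same: both arguments come down to Birkhoff's theorem and the frequency inequality $\nu(\{y_0=\beta\})<\nu(\Sigma_\alpha(\alpha))$, which is exactly your reformulation $\int E_\alpha\,{\rm d}\nu>1\iff\int G_{\alpha,0}\,{\rm d}\nu>0$. The paper, however, writes out $K_\alpha^c$ explicitly, shows by induction that an unmatched $\beta$ at position $i$ forces every window $\{i-j+1,\ldots,i\}$ to contain strictly more $\beta$'s than symbols of $D_\alpha$, and applies Birkhoff to $(\sigma_\alpha^{-1},\nu)$ to give that event measure zero; you instead derive the a.e.\ drift $H_{\alpha,\pm n}\to\pm\infty$ and match every $\beta$ by the integer intermediate-value property. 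On top of this you prove a step the paper treats as immediate from the definitions of $B_\alpha$ and $\phi_\alpha$, namely that the height-matching condition actually characterizes $K_\alpha=\phi_\alpha(B_\alpha)$: your non-crossing argument showing $\psi_\alpha(y)\in\Sigma_D\cap B_\alpha$ is correct and fills in this inclusion carefully (only the assertion $a\neq b$ deserves its one-line justification: if $a=b$ then $j=i_1+1$ would contradict the maximality defining $s_2$). For part (b) your route is genuinely different: the paper takes the uniform Bernoulli measure $\nu_\gamma$ on $M+N+1$ symbols, checks $\int E_\gamma\,{\rm d}\nu_\gamma=(2M+N)/(M+N+1)>1$ using $M\geq2$, and concludes density from part (a) together with full support, whereas you place an explicit periodic point with strictly positive one-period drift inside every cylinder. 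Your construction is more elementary and uses only the deterministic (matching) part of the argument, not the measure-theoretic statement (a); the paper's version is shorter once (a) is available. Both approaches are valid.
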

\begin{proof}
The statements for the Dyck shift were essentially proved in \cite[Section~4]{Kri74}. We extend the proofs there to the Dyck-Motzkin shift.
By the definitions of $B_\alpha$ and $\phi_\alpha$ 
we have
\[\begin{split}K_\alpha&=\\
\bigcap_{i=-\infty}^\infty&\bigcup_{k=1}^M\bigcup_{\ell=1}^N\left(\Sigma_{\alpha}(i;\alpha_k)\cup \Sigma_{\alpha}(i;1_\ell)\cup\left(\Sigma_{\alpha}(i;\beta)\cap\bigcup_{j=1}^\infty\{ H_{\alpha,i-j+1}=H_{\alpha,i+1}\}\right)\right),\end{split}\]
and by De Morgan's laws,
\[\begin{split}K_\alpha^c&=\\
\bigcup_{i=-\infty}^\infty&\bigcap_{k=1}^M\bigcap_{\ell=1}^N\left(\Sigma_{\alpha}(i;\alpha_k)^c\cap\Sigma_{\alpha}(i;1_\ell)^c\cap\left(\Sigma_{\alpha}(i;\beta)^c\cup\bigcap_{j=1}^\infty\{ H_{\alpha,i-j+1}= H_{\alpha,i+1}\}^c\right)\right),\end{split}\]
where the upper indices $c$ denote the complements in $\Sigma_\alpha$. For each $y\in 
K_\alpha^c$ 
there exists $i\in\mathbb Z$ such that
$y_i=\beta$ and
$ H_{\alpha,i-j+1}(y)\neq H_{\alpha,i+1}(y)$ for all $j\in\mathbb N$.
%Then we have $\zeta_{i-1}=\beta$, for otherwise  $ H_{\alpha,i-1}(\zeta)=H_{\alpha,i+1}(\zeta)$. If $\zeta_{i-2}\neq\beta$ then we have $\zeta_{i-3}=\beta$, for otherwise $H_{\alpha,i-3}(\zeta)= H_{\alpha,i+1}(\zeta)$. Iterating this argument gives
By induction, for all $j\geq1$ we have
\[\#\{m\in\{i-j+1,\ldots,i\}\colon y_m=\beta\}> \#\{m\in\{i-j+1,\ldots,i\}\colon y_m\in D_\alpha \}.\]
If $\nu\in M^{\rm e}(\Sigma_\alpha)$ and 
$\int E_\alpha{\rm d}\nu>1$, then $\nu(\Sigma_{\alpha}(0;\beta))<\nu(\Sigma_{\alpha}(\alpha))$, and
Birkhoff's ergodic theorem applied to 
$(\sigma_\alpha^{-1},\nu)$
yields $\nu(K_\alpha)=1$ as required in (a) for $\gamma=\alpha$. One can treat the case $\gamma=\beta$ analogously, using $\sigma_\beta$ instead of $\sigma_\alpha^{-1}$.

In order to prove (b),
 let $\nu_\gamma$ denote the Bernoulli measure on 
$\Sigma_{\gamma}$ with the uniform distribution over  $M+N+1$ symbols, which are of entropy $\log(M+N+1)$. 
Since
$\int E_\gamma{\rm d}\nu_\gamma>1$ by hypothesis, we have $\nu_\gamma(K_\gamma)=1$ by (a). Since $\nu_\gamma$ is fully supported on $\Sigma_\gamma$, $K_\gamma$ is a dense subset of $\Sigma_\gamma$. The proof of Lemma~\ref{include-lem2} is complete.
 \end{proof}

 \subsection{Borel embeddings}\label{Borel-sect}
Let $\Sigma_1$, $\Sigma_2$ be two subshifts.
For $i=1,2$,
let $\sigma_i$ denote the left shift acting on $\Sigma_i$.
Let $K$ be a shift-invariant, proper Borel subset of $\Sigma_1$. 
We say $\psi\colon K\to\Sigma_2$ is a {\it Borel embedding} of $\Sigma_1$ into $\Sigma_2$ if the following hold:
\begin{itemize}
\item[(i)] $\psi$ is continuous, injective
and $\psi^{-1}\colon \psi(K)\to K$ is continuous,
\item[(ii)] $\sigma_2\circ\psi=\psi\circ\sigma_1$,
%\item $\sup\{h(\mu\circ\psi^{-1})\colon\mu\in M(\Sigma_1)\text{ and }\mu(K_1)=1\}=h_{\rm top}(\Sigma_1).$
\item[(iii)] there is no continuous map $\bar\psi\colon\Sigma_1\to\Sigma_2$ such that $\bar\psi=\psi$ on $K$.
\end{itemize}

By
\cite[Theorem~5.3]{HI05} and \cite[Theorem~5.8]{HIK09}, there is no proper embedding of the full shift on $M+N+1$ symbols to $\Sigma_D$. From this and Lemma~\ref{include-lem} it follows that 
$K_\gamma$ is a proper subset of $\Sigma_\gamma$.
Moreover the following holds. 
\begin{prop}\label{emb-prop}For each $\gamma\in\{\alpha,\beta\}$ the map $\psi_\gamma\colon K_\gamma\to B_\gamma\subset\Sigma_D$ is a Borel embedding of $\Sigma_\gamma$ into $\Sigma_D$.\end{prop}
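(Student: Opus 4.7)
My plan is to verify all four defining requirements of a Borel embedding laid out at the start of Section~\ref{Borel-sect}: that $K_\gamma$ is a shift-invariant proper Borel subset of $\Sigma_\gamma$, and that $\psi_\gamma$ satisfies conditions (i), (ii), (iii). I will treat $\gamma = \alpha$ in detail; the case $\gamma = \beta$ is symmetric with past and future interchanged.

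The ambient structure of $K_\alpha$ is quick bookkeeping. The defining formula exhibits $K_\alpha$ as a countable Boolean combination of cylinder sets and level sets of the integer-valued continuous functions $H_{\alpha, i}$, so $K_\alpha$ is Borel. Shift-invariance of $B_\alpha$ is visible in its defining intersection over all $i \in \mathbb Z$, and transfers to $K_\alpha = \phi_\alpha(B_\alpha)$ via Lemma~\ref{include-lem}(b). Properness is witnessed by the constant sequence $(\ldots, \beta, \beta, \beta, \ldots) \in \Sigma_\alpha$, which has no $\beta$ closed by any preceding $\alpha$ and therefore fails to lie in $K_\alpha$. Conditions (i) and (ii) of the Borel embedding definition are read off directly from Lemma~\ref{include-lem}: part (a) gives that $\psi_\alpha$ is a homeomorphism onto its image $B_\alpha$ with continuous inverse $\phi_\alpha|_{B_\alpha}$, and part (b) rearranges to $\sigma \circ \psi_\alpha = \psi_\alpha \circ \sigma_\alpha$ on $K_\alpha$.

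The substantive task is (iii), the non-existence of a continuous extension $\bar\psi_\alpha\colon \Sigma_\alpha \to \Sigma_D$. I plan an oscillation argument: produce a single point $y \in \Sigma_\alpha \setminus K_\alpha$ and a sequence $\{y^{(n)}\} \subset K_\alpha$ with $y^{(n)} \to y$ in $\Sigma_\alpha$ such that $\{\psi_\alpha(y^{(n)})\}$ has no limit in $\Sigma_D$. Take $y$ to be the constant $\beta$-sequence. Build $y^{(n)}$ so that $y^{(n)}_i = \beta$ for $-n \leq i \leq n$, $y^{(n)}_i = \alpha_{k_n}$ for $-3n-1 \leq i \leq -n-1$ with $k_n \in \{1,2\}$ alternating in $n$, and fill the remaining coordinates with the neutral unit $1_1$ in the Motzkin case and with $\alpha_1$ in the pure Dyck case. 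A direct computation of the counting function $H_{\alpha, \cdot}(y^{(n)})$ shows that each of the $2n+1$ central $\beta$'s pairs, in the last-in-first-out sense of bracket matching, with one of the $\alpha_{k_n}$'s, so $y^{(n)} \in K_\alpha$; moreover the $\beta$ at position $0$ pairs with the $\alpha_{k_n}$ at position $-2n-1$, so $(\psi_\alpha(y^{(n)}))_0 = \beta_{k_n}$. The coordinate $0$ of $\psi_\alpha(y^{(n)})$ therefore oscillates between $\beta_1$ and $\beta_2$, preventing any limit and contradicting the hypothetical continuity of $\bar\psi_\alpha$.

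The main obstacle I anticipate is the bookkeeping that guarantees $y^{(n)} \in K_\alpha$: every $\beta$ appearing in $y^{(n)}$, including any potentially introduced by the filler pattern, must have a matching unmatched $\alpha$ to its left. In the Motzkin case ($N \geq 1$) the $1_1$ filler contributes nothing to the counting function, so this is painless; in the pure Dyck case ($N = 0$) one has to be sure the $\alpha_1$ filler does not introduce any right bracket, and verify via an explicit computation of $H_{\alpha, j}(y^{(n)})$ that the maximal $j < 1$ with $H_{\alpha, j}(y^{(n)}) = H_{\alpha, 1}(y^{(n)}) = -1$ is precisely $j = -2n-1$, so that the bracket-matching identification used to read off $(\psi_\alpha(y^{(n)}))_0 = \beta_{k_n}$ is unambiguous.
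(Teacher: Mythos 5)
Your proposal is correct, and for the crucial condition (iii) it takes a genuinely different route from the paper. The paper's proof is soft: assuming a continuous extension $\bar\psi_\gamma$ exists, it composes with $\phi_\gamma$ to get a continuous self-map of $\Sigma_D$ equal to the identity on $B_\gamma$, invokes density of $B_\gamma$ in $\Sigma_D$ to conclude $\bar\psi_\gamma\circ\phi_\gamma={\rm id}$ everywhere, and derives a contradiction from the non-injectivity of $\phi_\gamma$; no explicit points are ever exhibited. You instead run a concrete oscillation argument: a sequence $y^{(n)}\in K_\alpha$ converging to the all-$\beta$ point whose images $\psi_\alpha(y^{(n)})$ fail to converge because the matching left bracket of the $\beta$ at coordinate $0$ recedes to position $-2n-1$ and its label alternates between $\alpha_1$ and $\alpha_2$ (this is where $M\geq 2$ enters, exactly as it must). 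Your computation of $H_{\alpha,j}(y^{(n)})$ and of $s_\alpha(0,y^{(n)})=-2n-1$ checks out in both the Motzkin ($1_1$ filler) and Dyck ($\alpha_1$ filler) cases, since $K_\alpha$ constrains only the $\beta$'s and unmatched left brackets are harmless. The trade-off: the paper's argument is two lines and reuses Lemmas~\ref{include-lem} and \ref{include-lem2}, but it hides where discontinuity actually occurs; your construction is longer and needs the bookkeeping you flag, but it localizes the failure of continuity at explicit points and, as a bonus, your all-$\beta$ witness gives properness of $K_\gamma$ elementarily, where the paper appeals to the embedding obstructions of Hamachi--Inoue and Hamachi--Inoue--Krieger. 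Conditions (i) and (ii), and the Borel/shift-invariance bookkeeping, are handled the same way in both proofs, via Lemma~\ref{include-lem}.
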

\begin{proof}Conditions (i), (ii) in the definition of Borel embedding is a consequence of 
Lemma~\ref{include-lem}.
It is left to show (iii).
If there were a continuous map $\bar\psi_\gamma\colon\Sigma_\gamma\to\Sigma_D$ such that $\bar\psi_\gamma(y)=\psi_\gamma(y)$ for all $y\in K_\gamma$,
then $\bar\psi_\gamma\circ\phi_\gamma\colon\Sigma_D\to\Sigma_D$ would be continuous. Since
$\psi_\gamma\circ\phi_\gamma(x)=x$ holds
for all $x\in B_\gamma$,
$\bar\psi_\gamma\circ\phi_\gamma (x)=x$ would hold
for all $x\in B_\gamma$.
Since $B_\gamma$ is dense in $\Sigma_D$
Lemma~\ref{include-lem2}(b), $\bar\psi_\gamma\circ\phi_\gamma (x)=x$
would hold for all $x\in \Sigma_D$. Then
$\phi_\gamma$ would be injective, a contradiction. \end{proof}
 
\subsection{Approximation of ergodic measures by $CO$-measures}\label{approx-sec}
Let $\Sigma$ be a subshift.
A point $x\in \Sigma$ is called a {\it  periodic point} of period $n\in\mathbb N$
if $\sigma^nx=x$.
An element of $M(\Sigma)$ that is supported on the orbit of a single periodic point is called a {\it $CO$-measure}.
Clearly $CO$-measures are ergodic.
Let $M^{CO}(\Sigma)$ denote the set of $CO$-measures.

Recall that
 there exist
three pairwise disjoint shift-invariant Borel
subsets $A_0$, $A_\alpha$, $A_\beta\subset\Sigma_D$ such that
any shift-invariant ergodic measure has measure $1$ to one of these three sets (Section~\ref{iso-sec} and Lemma~\ref{trichotomy}).
Let
\[M^{\rm e}_{\gamma}(\Sigma_D)=\{\mu\in M^{\rm e}(\Sigma_D)\colon \mu(A_\gamma)=1\}\ \text{ for } \gamma\in\{0,\alpha,\beta\}.\]

 \begin{prop}\label{per-approx}\
 For all $\gamma\in\{\alpha,\beta\}$
 we have
  \[M^{\rm e}_{\gamma}(\Sigma_D)\subset \overline{M^{\rm e}_{\gamma}(\Sigma_D)\cap M^{CO}(\Sigma_D)}\]
and \[M^{\rm e}_{0}(\Sigma_D)\subset \overline{M^{\rm e}_{\gamma}(\Sigma_D)\cap M^{CO}(\Sigma_D)}.\]
In particular, $M^{CO}(\Sigma_D)$ is dense in $M^{\rm e}(\Sigma_D)$.
\end{prop}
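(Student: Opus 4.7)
The plan is to approximate each $\mu\in M^{\rm e}_\gamma(\Sigma_D)$ by $CO$-measures constructed explicitly from long $\mu$-generic orbit segments, after a small prefix/suffix modification that turns the segment into an admissible periodic word whose Dyck--Motzkin reduction lies in $D_\alpha^\ast\setminus\{1\}$ (so that the resulting periodic orbit lies in $A_\alpha$). Before constructing anything I would reduce by symmetry: the left--right involution $\iota\colon\Sigma_D\to\Sigma_D$, $(\iota x)_i=\tau(x_{-i})$ with $\tau$ swapping $\alpha_k\leftrightarrow\beta_k$ and fixing $D_0$, is a homeomorphism that anti-commutes with $\sigma$ and satisfies $\iota(A_\alpha)=A_\beta$ and $\iota(A_0)=A_0$. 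The push-forward $\iota_\ast$ is then a weak$^\ast$ homeomorphism of $M(\Sigma_D)$ preserving ergodicity and $M^{CO}(\Sigma_D)$ and exchanging $M^{\rm e}_\alpha$ with $M^{\rm e}_\beta$, so it suffices to prove
\[M^{\rm e}_\gamma(\Sigma_D)\subset\overline{M^{\rm e}_\alpha(\Sigma_D)\cap M^{CO}(\Sigma_D)}\qquad(\gamma\in\{0,\alpha\});\]
the corresponding statements with $\beta$ will follow by applying $\iota_\ast$.

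Fix such $\mu$ and choose, via Birkhoff's ergodic theorem, a $\mu$-generic $x\in\Sigma_D$ with $\mu^x_N:=\tfrac{1}{N}\sum_{i=0}^{N-1}\delta_{\sigma^i x}\to\mu$ in the weak$^\ast$ topology. I also need to control the depth of unmatched right brackets
\[k(x,N):=\max\{0,-\min_{0\le j\le N}H_j(x)\}\]
inside $w_N:=x_0\cdots x_{N-1}$. When $\gamma=\alpha$, $x\in A_\alpha$ forces $H_j(x)\to+\infty$, so $k(x,N)$ stays bounded in $N$; when $\gamma=0$, $\mu(A_0)=1$ combined with Birkhoff pins $\int G_0\,d\mu=0$, and the uniform ergodic theorem applied to $G_0$ forces $\max_{0\le j\le N}|H_j(x)|/N\to 0$ $\mu$-a.s. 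In either case I may assume $k(x,N)/N\to 0$. Writing the Dyck--Motzkin reduction as ${\rm red}(w_N)=\beta_{i_1}\cdots\beta_{i_k}\alpha_{j_l}\cdots\alpha_{j_1}$ with $k=k(x,N)$ and $l\ge 0$, I form the closed-up word
\[\tilde w_N:=\begin{cases}\alpha_{i_k}\cdots\alpha_{i_1}\cdot w_N & \text{if }l\ge 1,\\ \alpha_{i_k}\cdots\alpha_{i_1}\cdot w_N\cdot\alpha_1 & \text{if }l=0,\end{cases}\]
of length $T_N\in\{N+k,N+k+1\}$, and let $y_N$ be its periodic extension.

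The prepended $\alpha$'s are stacked in exactly the order needed to be popped by the unmatched $\beta$'s of $w_N$, so a pushdown argument shows that every subword of every iterate $\tilde w_N^r$ $(r\ge 1)$ has nonzero Dyck--Motzkin reduction: inside a single $\tilde w_N$ the prefix $\alpha$'s are consumed only by the leftmost unmatched $\beta$'s of $w_N$, and across the periodic junctions $\tilde w_N$ always ends in a run of $\alpha$'s that cannot clash with the $\alpha$'s opening the next copy. Hence $y_N\in\Sigma_D$, ${\rm red}(\tilde w_N)\in D_\alpha^\ast\setminus\{1\}$, and $H_i(y_N)\to\pm\infty$ as $i\to\pm\infty$, giving $y_N\in A_\alpha$; the $CO$-measure $\tilde\mu_N$ on the orbit of $y_N$ thus lies in $M^{\rm e}_\alpha(\Sigma_D)\cap M^{CO}(\Sigma_D)$. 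Finally, for any cylinder function $f$ on $\Sigma_D$ of range $L$ the orbit pieces $(\sigma^{k+i}y_N)$ and $(\sigma^i x)$ agree on the window $[-L,L]$ whenever $L\le i\le N-L-1$, so
\[\left|\int f\,d\tilde\mu_N-\int f\,d\mu^x_N\right|\le \frac{C(k(x,N)+L+1)}{T_N}\|f\|_{C^0},\]
and the right-hand side tends to $0$; combining with $\mu^x_N\to\mu$ yields $\tilde\mu_N\to\mu$, and the final density assertion follows by pasting the three inclusions via Lemma~\ref{trichotomy}. The step that will need the most care is the pushdown admissibility argument for every subword of every $\tilde w_N^r$; the rest is Birkhoff bookkeeping.
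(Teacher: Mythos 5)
Your proposal is correct, and it follows the same overall strategy as the paper (approximate $\mu$ by empirical measures along a generic orbit, close the orbit segment into an admissible periodic word whose orbit lies in $A_\alpha$ or $A_\beta$, and compare the resulting $CO$-measure with the empirical measure), but the closing mechanism is genuinely different. The paper works with a \emph{two-sided} generic point and finds the closing inside the orbit itself: for $\mu\in M^{\rm e}_\alpha(\Sigma_D)$ it uses that $H_i(x)\to-\infty$ as $i\to-\infty$ to locate $l\le -n$ with $x_l\cdots x_n$ negative, and for $\mu\in M^{\rm e}_0(\Sigma_D)$ it extracts a neutral word $x_l\cdots x_m$ spanning $[-n,n]$, repeats it and caps it with $\alpha_1$ (resp.\ $\beta_1$); no quantitative height estimates are needed. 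You instead take one-sided segments $x_0\cdots x_{N-1}$ and perform a surgery, prepending the exact matching left brackets for the $k(x,N)$ unmatched right brackets, which requires your extra bookkeeping that $k(x,N)$ is bounded on $A_\alpha$ and $o(N)$ on $A_0$ (your argument for both is sound: boundedness is immediate from $H_i\to+\infty$, and the $A_0$ case follows since $H_N/N\to\int G_0\,{\rm d}\mu=0$ almost surely and the running extremum of a sequence with $H_j/j\to0$ is $o(N)$). Your symmetry reduction via the time-reversal-plus-bracket-swap involution is also valid and replaces the paper's ``analogous argument with $\sigma^{-1}$ / with $\beta_1$'' remarks. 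One reassurance: the step you flag as needing the most care -- admissibility of the periodic extension of $\tilde w_N$ -- is in fact the easy part and needs no pushdown analysis: since the zero of the monoid is absorbing and ${\rm red}(\tilde w_N)\in D_\alpha^*\setminus\{1\}$, one has ${\rm red}(\tilde w_N^r)={\rm red}(\tilde w_N)^r\neq0$, so every subword of every power has nonzero reduction; this is exactly the fact the paper records (neutral, negative or positive words have admissible powers), and it also gives $y_N\in A_\alpha$ because the height gains at least $1$ per period. What the paper's route buys is brevity (no deficiency estimates, admissibility and membership in $A_\alpha$ are immediate from negativity of the chosen word); what yours buys is a uniform treatment of the $A_\alpha$ and $A_0$ cases, only one-sided genericity, and an explicit error bound $C(k(x,N)+L+1)/T_N$ for cylinder functions in place of the paper's nested-cylinder argument.
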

% As a corollary to Lemmas~\ref{per-approx} and \ref{per-approx3} we obtain the following statement. 
 %\begin{cor}\label{per-cor}For each $\gamma\in\{\alpha,\beta\}$ we have \[M^{\rm e}(\Sigma_D,\sigma)\subset \overline{M^{\rm e}_{\gamma}(\Sigma_D,\sigma)\cap M^{CO}(\Sigma_D,\sigma)}.\]\end{cor}

\begin{proof} For $\omega=\omega_1\cdots\omega_k\in D^*$ and $n\in\mathbb N$, 
let $\omega^n\in D^*$ denote the $n$-fold concatenation: $(\omega^n)_i=\omega_{j}$, $j=i$ mod $k$ for $i=1,\ldots,kn$. 
Let $\omega\in D^*$. 
%We say $\omega$ is a
 %{\it periodic defining word}
%\textcolor{red}{The def of periodic defining block in \cite[p.108]{HI05} wrong?}
%if $\omega^n$ is admissible for all $n\in\mathbb N$.
%If $\omega\in D^*$ is a periodic defining word of $\Sigma_D$, then $\Sigma_D(0;\omega)$ contains exactly one periodic point of period equal to the word length of $\omega$. 
%$|\omega|$.
 If ${\rm red}(\omega)=1$ then we say $\omega$ is 
{\it neutral}. 
%The empty word $\emptyset$ is said to be neutral.
 If ${\rm red}(\omega)$ is a concatenation of symbols in $D_\alpha$ (resp. $D_\beta$),
then we say $\omega$ is {\it negative}
(resp. {\it positive}). If $\omega$ is neutral, negative or positive then $\omega^n$ is admissible for all $n\in\mathbb N$. 

In order to prove the first inclusion, 
let $\mu\in M^{\rm e}_{\gamma}(\Sigma_D)$.
  By Birkhoff's ergodic theorem,
  there exists $x\in A_\gamma$ such that $n^{-1}\sum_{i=0}^{n-1}\delta_{\sigma^ix}\to\mu$ and $n^{-1}\sum_{i=0}^{-n+1}\delta_{\sigma^ix}\to\mu$,
where $\delta_{\sigma^ix}$ denotes the unit point mass at $\sigma^ix$.  
     If $\gamma=\alpha$, then
  by the definition of $A_{\alpha}$, 
for any $n\in \mathbb N$
  there exists $l\in\mathbb Z$ such that $l\le -n$ and $\omega=x_{l}\cdots x_n$ is negative. 
  Then $\Sigma_D(l;x_{l}\cdots x_n)$ contains exactly one periodic point of period $n-l+1$. Let $\mu_n$ denote the $CO$-measure supported on the orbit of this periodic point.
   Since $\{x\}=\bigcap_{n=1}^\infty\Sigma_D(l;x_{l}\cdots x_n)$ we have
     $\mu_n\in M^{\rm e}_{\alpha}(\Sigma_D)$ and $\mu_n\rightarrow \mu$.
  A proof for the case $\gamma=\beta$ is analogous.

In order to prove the second inclusion, let $\mu\in M^{\rm e}_{0}(\Sigma_D)$.
 By Birkhoff's ergodic theorem,
  there exists $x\in A_0$ such that $n^{-1}\sum_{i=0}^{n-1}\delta_{\sigma^ix}\to\mu$ and $n^{-1}\sum_{i=0}^{-n+1}\delta_{\sigma^ix}\to\mu$. 
By the definition of $A_0$, for any $n\in\mathbb N$ there exist $l$, $m\in\mathbb Z$ such that $l\le -n$, $m\ge n$ and $\omega=x_{l}\cdots x_{m}$ is neutral. For all $k\in\mathbb N$,
   $\omega^{2k}\alpha_1$ is admissible, negative and $\Sigma_D(-k|\omega|+l;\omega^{2k}\alpha_1)$
contains exactly one periodic point of period $2k|\omega|+1$. Let $\mu_n$ denote the $CO$-measure supported on the orbit of this periodic point.
Clearly we have $\mu_n\in M^{\rm e}_{\alpha}(\Sigma_D)$.
Since $\{x\}=\bigcap_{n=1}^\infty \Sigma_D(-k|\omega|+l;\omega^{2k})$, we obtain $\mu_n\to\mu$.
This verifies the second inclusion for $\gamma=\alpha$.
A proof for $\gamma=\beta$ is analogous, with all $\alpha_1$ replaced by $\beta_1$. 
%Let $\mu\in M^{\rm e}_{0}(\Sigma_D,\sigma)$ be a $CO$-measure supported on the orbit of a periodic point $x=(x_n)_{n}\in\Sigma_D$ of period $p$. Note that $p$ is even. Set $v=x_{-p/2}\cdots x_0\cdots x_{p/2}$.   Without loss of generality we may assume $x_{-p/2}$ is right and $x_{p/2}$ is left. We have  $x\in[v]_{p/2}$, and by Lemma~\ref{factorization}(a), there are a neutral or negative word $v(\alpha)$, and a neutral or positive word $v(\beta)$, %$w\in L(\Sigma_D)$  such that  $v=v(\beta)v(\alpha)$. 
  %${\rm red}(w)=1$ and  The condition $\mu\in M^{\rm e}_{0}(\Sigma_D,\sigma)$ implies $|{\rm red}(v(\alpha))|=|{\rm red}(v(\beta))|$. \[ w_\beta=\begin{cases}    \beta_1&\text{ if }v(\alpha)=\emptyset\\    \rho^*(v(\alpha))&\text{ otherwise.}\end{cases}\] We have $v^{2n}\rho^*(v(\alpha))\in L(\Sigma_D)$ and is a periodic defining block.  $[v^{2n}w_\beta]_{pn+p/2}$ contains a periodic point whose orbit supports a  closed orbit measure in $M^{\rm e}_{\beta}(\Sigma_D,\sigma)$. Since $\{x\}=\bigcap_{n\geq1} [v^{2n}w_\beta]_{pn+p/2}$,these closed orbit measures approximate $\mu$.
 \end{proof}

 \section{On the abundance of high complexity paths}
 The aim of this section is to prove the abundance of high complexity paths of ergodic measures for the Dyck-Motzkin shift informally stated in (1) (2) in Section~1.
 In Section~\ref{abundance} we give relevant definitions, and give a precise statement of this in
 Proposition~\ref{join-lem}.
 After proving two preliminary lemmas in Section~\ref{pre-lem}, we complete the proof of Proposition~\ref{join-lem} in Section~\ref{pfprop}.
 In Section~\ref{add-sec} we comment more on the path connectedness of spaces of ergodic measures.

\subsection{A precise statement}\label{abundance}

A path $t\in[0,1]\mapsto \mu_t\in M^{\rm e}(\Sigma_D)$ is called a {\it high complexity path} if the following two conditions hold:
\begin{itemize}
\item[(A1)]
$\mu_t$ is fully supported on $\Sigma_D$ and is Bernoulli
for all $t\in[0,1]$ but countably many values;
\item[(A2)]
For any $\mu\in \{\mu_t\colon t\in[0,1]\}$, the set
$\{t\in[0,1]\colon\mu_t=\mu\}$ is countable.
\end{itemize}
The next proposition is a key ingredient in the proof of Theorem~A(b) and that of Theorem~B.

\begin{prop}[The abundance of high complexity paths]
\label{join-lem}
Let $U$ be a convex open subset of $M(\Sigma_D)$, let $\gamma\in\{\alpha,\beta\}$ and let $\mu^+$, $\mu^-\in U\cap (M^{\rm e}_{0}(\Sigma_D)\cup M^{\rm e}_{\gamma}(\Sigma_D))$ be distinct measures.
There exists a high complexity path that lies in $U\cap (M^{\rm e}_{0}(\Sigma_D)\cup M^{\rm e}_{\gamma}(\Sigma_D))$ and joins $\mu^+$, $\mu^-$.
\end{prop}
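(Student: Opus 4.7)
The plan is to pull the problem over to the full shift $\Sigma_\gamma$ on $M+N+1$ symbols via the factor map $\phi_\gamma$, where Bernoulli measures are dense and path-connected by Sigmund's theorem \cite[Theorem~B]{Sig77}, and to push the constructed path back to $\Sigma_D$ through the Borel embedding $\psi_\gamma$ of Section~\ref{DM-str}. Without loss of generality take $\gamma=\alpha$, and transport the endpoints to $\nu^\pm=(\phi_\alpha)_*\mu^\pm\in M^{\rm e}(\Sigma_\alpha)$. Since $A_0\cup A_\alpha\subset B_\alpha$ forces $\mu^\pm(B_\alpha)=1$, Lemma~\ref{include-lem}(a) yields $\nu^\pm(K_\alpha)=1$ and $(\psi_\alpha)_*\nu^\pm=\mu^\pm$.

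On $\Sigma_\alpha$ I parametrize fully supported Bernoulli measures by probability vectors $p=(p_a)_{a\in D_\alpha\cup D_0\cup\{\beta\}}$ in the positive open simplex, and seek a path $t\mapsto\nu_t$ joining $\nu^+$ and $\nu^-$ such that for every $t\in(0,1)$ the measure $\nu_t$ is Bernoulli with parameter $p(t)$ satisfying the strict open inequality $\sum_{k=1}^M p_{\alpha_k}(t)>p_\beta(t)$. By Lemma~\ref{include-lem2}(a) this forces $\nu_t(K_\alpha)=1$, so $\mu_t:=(\psi_\alpha)_*\nu_t$ is a well-defined shift-invariant Borel probability measure on $\Sigma_D$ supported on $B_\alpha$. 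By Lemma~\ref{include-lem}(a), $\psi_\alpha$ realizes a measure-theoretic isomorphism between $(\Sigma_\alpha,\sigma_\alpha,\nu_t)$ and $(\Sigma_D,\sigma,\mu_t)$, so $\mu_t$ is ergodic, Bernoulli, and lies in $M^{\rm e}_\alpha(\Sigma_D)$; full support of $\mu_t$ on $\Sigma_D$ follows from density of $B_\alpha$ (Lemma~\ref{include-lem2}(b)) combined with positivity of $p(t)$. The parametric freedom lets me arrange condition (A2) by taking $p$ to be a polygonal curve with generic slopes, and lets me restrict $p$ to the open region in the simplex that maps into $U$ under $p\mapsto(\psi_\alpha)_*\nu_p$.

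The main obstacle is continuity of $t\mapsto\mu_t$, because by Proposition~\ref{emb-prop} $\psi_\alpha$ admits no continuous extension to $\Sigma_\alpha$, so the push-forward operator $(\psi_\alpha)_*$ is not in general weak*-continuous on $M(\Sigma_\alpha)$. For $t\in(0,1)$ I establish continuity by writing each cylinder probability $\mu_t([w])$ as an explicit continuous function of $p(t)$: under the Bernoulli law $\nu_t$, the map $\psi_\alpha$ pairs each $\beta$ in $y$ with the left bracket visited by a first passage of the random walk $H_{\alpha,\cdot}(y)$, and the strict drift $\sum_kp_{\alpha_k}>p_\beta$ guarantees that this hitting distribution is finite and depends continuously on $p$. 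At the endpoints, continuity $\mu_t\to\mu^\pm$ as $t\to0,1$ is the delicate point, especially when $\mu^\pm\in M^{\rm e}_0(\Sigma_D)$ and the drift of $\nu^\pm$ degenerates. I would resolve this by a concatenation argument: using Proposition~\ref{per-approx} and openness of $U$, select auxiliary $\tilde\mu^\pm\in M^{\rm e}_\alpha(\Sigma_D)\cap U$ arbitrarily close to $\mu^\pm$ with strictly positive drift; construct short connecting paths $\mu^\pm\rightsquigarrow\tilde\mu^\pm$ by iterating the Bernoulli-parameter construction inside shrinking weak*-neighborhoods, reducing continuity at $\mu^\pm$ to approximation of cylinder probabilities via the hitting-time description above; and glue these to the main path joining $\tilde\mu^+$ to $\tilde\mu^-$ obtained in the previous paragraph.
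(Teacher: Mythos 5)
Your overall plan—transport to $\Sigma_\alpha$ via $\phi_\alpha$, build paths there, transport back via the Borel embedding $\psi_\alpha$—is exactly the paper's strategy, and your use of explicit Bernoulli parametrizations in place of the paper's route through $CO$-measures plus Sigmund's \cite[Theorem~B]{Sig77} is a reasonable variation for the interior of the path. But there are two genuine gaps where the proposal stops short of a proof.

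The first and more serious gap is the endpoint continuity. You correctly identify that $\psi_\alpha^*$ is not weak*-continuous on $M(K_\alpha)$ and that continuity of $t\mapsto\mu_t$ as $t\to 0,1$ is the delicate point, particularly when $\mu^\pm\in M^{\rm e}_0(\Sigma_D)$ and the drift of $\nu^\pm$ is zero. But the proposed fix—``iterate the Bernoulli-parameter construction inside shrinking weak*-neighborhoods, reducing continuity to approximation of cylinder probabilities via the hitting-time description''—is essentially a restatement of the problem, not a solution. The hitting-time formula for $\mu_t([w])$ is a series over matching positions whose geometric decay rate deteriorates as the drift $\sum_k p_{\alpha_k}(t)-p_\beta(t)$ goes to $0$; near a zero-drift endpoint there is no uniform-in-$t$ tail bound, so ``approximation of cylinder probabilities'' does not obviously give convergence of the push-forwards. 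The paper bypasses the quantitative estimate entirely with a soft argument (Lemma~\ref{contained-lem}): take any sequence of transported measures in the sets $W_n^+$, extract a convergent subsequence with limit $\rho$, use continuity of $\phi_\alpha^*$ (which is continuous everywhere since $\phi_\alpha$ extends continuously to all of $\Sigma_D$) to conclude $\rho\circ\phi_\alpha^{-1}=\mu^+\circ\phi_\alpha^{-1}$, and then use the trichotomy of Lemma~\ref{trichotomy} together with passing $\int E_\alpha\circ\phi_\alpha\,{\rm d}\cdot\ge 1$ to the limit to rule out $\rho\in M^{\rm e}_\beta(\Sigma_D)$, so that $\rho$ is supported on $B_\alpha$ where $\phi_\alpha$ is injective, forcing $\rho=\mu^+$. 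Something playing the role of this lemma is indispensable, and the proposal does not supply it.

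The second gap concerns keeping the path inside $U$. You say ``restrict $p$ to the open region in the simplex that maps into $U$,'' but you do not argue that this region is path connected, nor that your endpoints (or their Bernoulli approximants) lie in the same component. The paper handles this through Lemma~\ref{1-path}: cover the segment $L=\{t\mu_1^+\circ\phi_\alpha^{-1}+(1-t)\mu_1^-\circ\phi_\alpha^{-1}\}$ by finitely many convex open sets $Q_i\subset V_\alpha$ with $\psi_\alpha^*(Q_i\cap M(K_\alpha))\subset U$, pick $CO$-measures in consecutive overlaps $Q_i\cap Q_{i+1}$ (using Sigmund's density theorem \cite[Theorem~1]{Sig70}), and string together Sigmund paths inside each $Q_i$. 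This is where the convexity of $U$ actually gets used, and it is not obviated by parametrizing Bernoulli measures directly. Without an argument of this type the chain of Bernoulli parameters you construct has no guarantee of staying in $\psi_\alpha^{*-1}(U)$.
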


From Proposition~\ref{join-lem} we immediately obtain  
 the following statement.
 %that will be used in the proof of Theorem~B.
\begin{prop}\label{cor-path-1}
The spaces $M^{\rm e}_{0}(\Sigma_D)\cup M^{\rm e}_{\alpha}(\Sigma_D)$ and $M^{\rm e}_{0}(\Sigma_D)\cup M^{\rm e}_{\beta}(\Sigma_D)$ are path connected and locally path connected.\end{prop}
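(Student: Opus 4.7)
The plan is to deduce both assertions of Proposition~\ref{cor-path-1} as immediate corollaries of Proposition~\ref{join-lem}. Since the two spaces in the statement are treated identically by specializing $\gamma$ to $\alpha$ or to $\beta$, I write $Y:=M^{\rm e}_{0}(\Sigma_D)\cup M^{\rm e}_{\gamma}(\Sigma_D)$ for an unspecified $\gamma\in\{\alpha,\beta\}$ and establish path connectedness and local path connectedness of $Y$.

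For path connectedness, given distinct $\mu^+,\mu^-\in Y$, I would apply Proposition~\ref{join-lem} with $U=M(\Sigma_D)$ itself. The ambient space $M(\Sigma_D)$ is convex and, being the whole space, trivially open in itself, so the hypotheses of that proposition are met. It produces a (high complexity) path that lies in $M(\Sigma_D)\cap Y=Y$ and joins $\mu^+$ to $\mu^-$.

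For local path connectedness at a point $\mu\in Y$, I would invoke the standard fact that, in the weak* topology, $\mu$ admits a neighborhood base in $M(\Sigma_D)$ consisting of convex open sets of the form
\[
U=\left\{\nu\in M(\Sigma_D)\colon \left|\int f_i\,{\rm d}\nu-\int f_i\,{\rm d}\mu\right|<\varepsilon\text{ for all }i=1,\ldots,k\right\},
\]
for some $k\in\mathbb N$, continuous functions $f_1,\ldots,f_k\in C(\Sigma_D)$ and $\varepsilon>0$. Given any neighborhood $V$ of $\mu$ in the subspace $Y$, I write $V=W\cap Y$ for some weak* open $W\subseteq M(\Sigma_D)$ and choose $U$ of the above form with $\mu\in U\subseteq W$. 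Applying Proposition~\ref{join-lem} to every pair of distinct points in $U\cap Y$ then shows that $U\cap Y$ is path connected. Since $U\cap Y$ is open in $Y$, contains $\mu$, and is contained in $V$, it furnishes the required basis element.

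No serious obstacle arises in this derivation: Proposition~\ref{join-lem} absorbs all the nontrivial work, and what remains is essentially unpacking the definitions of path connectedness, local path connectedness, and the weak* topology on $M(\Sigma_D)$.
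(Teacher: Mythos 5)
Your argument is correct and follows essentially the same route as the paper: apply Proposition~\ref{join-lem} with $U=M(\Sigma_D)$ for path connectedness, and use the neighborhood base of convex weak* open sets together with Proposition~\ref{join-lem} for local path connectedness. The only difference is that you spell out the standard facts the paper leaves implicit.
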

\begin{proof}The path connectedness is a consequence of Proposition~\ref{join-lem} with $U=M(\Sigma_D)$.
Since any point in $M(\Sigma_D)$ has a neighborhood base consisting of convex open sets,
the local path connectedness is also a consequence of Proposition~\ref{join-lem}.
\end{proof}

A proof of Proposition~\ref{join-lem} has been inspired by the works of Sigmund \cite{Sig70,Sig77} on the path connectedness of the space of ergodic measures for
Axiom A diffeomorphisms (essentially topologically mixing Markov shifts).
In \cite{Sig70} he proved that the set of $CO$-measures are dense in the space of shift-invariant Borel probability measures. Later in the proof of \cite[Theorem~B]{Sig77} he showed  that any pair of $CO$-measures can be joined by a path of ergodic measures, in such a way that if the two $CO$-measures lie in a convex open set, then the whole path can be chosen to lie in this set.
%if one of the two CO-measures lies in a convex neighborhood of the other one, then the whole path can be chosen to lie in this neighborhood. 
These paths can be concatenated to form a path joining a given pair of ergodic measures. 
%In order to prove Proposition~\ref{join-lem} further develop Sigmund's argument for the Dyck-Motzkin shift.

The rest of this section except Section~\ref{add-sec} is dedicated to
a proof of Proposition~\ref{join-lem} that breaks into three steps.
We only give a proof for $\gamma=\alpha$ since that
for $\gamma=\beta$ is identical.
Using Proposition~\ref{per-approx},
we take sequences $\{\mu_n^+\}_{n\in\mathbb N}$, $\{\mu_{n}^-\}_{n\in\mathbb N}$ of $CO$-measures in $U$ that approximate $\mu^+$, $\mu^-$ respectively.
Then we transport 
$\{\mu_n^+\}_{n\in\mathbb N}$, $\{\mu_{n}^-\}_{n\in\mathbb N}$ %to $\Sigma_\alpha$
 via $\phi_\alpha|_{B_\alpha}\colon B_\alpha\to K_\alpha\subset\Sigma_\alpha$, and apply Sigmund's result in the proof of \cite[Theorem~B]{Sig77} (see Lemma~\ref{Sig-lem}) to
obtain sequences of paths that appropriately join the transported measures.  
Finally we transport these paths back to $\Sigma_D$ via $\psi_\alpha\colon K_\alpha\to B_\alpha\subset\Sigma_D$,
and concatenate all of them to form a path joining $\mu^+$, $\mu^-$ with the required property.
%For each $n\in\mathbb N$ we will then construct intermediate  paths joining $\mu_n^+$, $\mu_{n+1}^{+}$ and $\mu_n^-$, $\mu_{n+1}^{-}$, together with a path joining  $\mu_1^+$, $\mu_{1}^-$. Finally we will concatenate all these paths to form a path joining $\mu^+$, $\mu^-$ with the required property. All the intermediate paths will be obtained by transporting these paths.
The transport in the last step needs to be justified since $\psi_\alpha$ is a Borel embedding of $\Sigma_\alpha$ into $\Sigma_D$
as in Proposition~\ref{emb-prop}, which does not have a continuous extension to the whole shift space $\Sigma_\alpha$. 

\begin{remark}\label{GP-rem}
%Prior to our work, 
Gorodetski and Pesin \cite{GP17} further developed Sigmund's argument explained as above to prove the path connectedness of some basic pieces of the space of ergodic measures for some partially hyperbolic diffeomorphisms. In \cite{GP17}, they %left off 
did not treat the path connectedness of the whose space of ergodic measures.
\end{remark}

\subsection{Preliminary lemmas}\label{pre-lem}
Recall that $\sigma_\alpha K_\alpha=K_\alpha$ but
 $K_\alpha$ is not a subshift as it is not closed. With a slight abuse of notation, let $M(K_\alpha)$ denote the space of $\sigma_\alpha|_{K_\alpha}$-invariant Borel probability measures on $K_\alpha$ endowed with the weak* topology.
Note that 
 \[M(K_\alpha)=\{\nu\in M(\Sigma_\alpha)\colon
\nu(K_\alpha)=1\}.\]
By Lemma~\ref{include-lem}(a),
 $\phi_\alpha|_{B_\alpha}\colon B_\alpha\to K_\alpha$ is a homeomorphism whose inverse is the Borel embedding $\psi_\alpha\colon K_\alpha\to B_\alpha$.
 Define a push-forward $\psi_{\alpha}^*\colon M(K_\alpha) \to M(\Sigma_D)$ by
\[\psi_{\alpha}^*(\nu)=\nu\circ\psi_\alpha^{-1}.\]
Since $\psi_\alpha$ is continuous, $\psi_\alpha^*$ is continuous.
\begin{lemma}\label{support}
If $\nu\in M(K_\alpha)$
is fully supported on $\Sigma_\alpha$,
then $\psi_\alpha^*(\nu)$ is fully supported on $\Sigma_D$.
\end{lemma}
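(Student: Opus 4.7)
\textbf{Proof proposal for Lemma~\ref{support}.}
To show that $\psi_\alpha^*(\nu)$ is fully supported on $\Sigma_D$, it suffices to prove that every non-empty cylinder of $\Sigma_D$ receives positive $\psi_\alpha^*(\nu)$-mass. By shift invariance, it is enough to treat cylinders of the form $\Sigma_D(1;\omega)$ with $\omega=\omega_1\cdots\omega_n\in\mathcal L(\Sigma_D)$. Since $\psi_\alpha^*(\nu)=\nu\circ\psi_\alpha^{-1}$, this amounts to exhibiting, for each such $\omega$, a non-empty open set $V\subset\Sigma_\alpha$ with $V\cap K_\alpha\subset\psi_\alpha^{-1}(\Sigma_D(1;\omega))$, because then $\nu(V)>0$ by full support of $\nu$ and $\nu(V\cap K_\alpha)=\nu(V)$ by $\nu(K_\alpha)=1$.

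The key step is to build, from $\omega$, a cylinder in $\Sigma_\alpha$ that is mapped by $\psi_\alpha$ (restricted to $K_\alpha$) into $\Sigma_D(1;\omega)$. The difficulty is that $\psi_\alpha$ is non-local: an occurrence of $\beta$ in $y\in K_\alpha$ is decoded using the matching left bracket determined by $s_\alpha(\cdot,y)$, which may lie far to the left of position~$1$. To handle this, I will enlarge $\omega$ on the left so that every $\beta_k$ in $\omega$ has its match inside the enlarged word. Writing ${\rm red}(\omega)=\beta_{k_1}\cdots\beta_{k_p}\alpha_{l_1}\cdots\alpha_{l_q}$ and setting $\eta=\alpha_{k_p}\alpha_{k_{p-1}}\cdots\alpha_{k_1}$, a direct check shows that $\eta\omega\in\mathcal L(\Sigma_D)$, and in $\eta\omega$ every $\beta_k$ coming from $\omega$ is closed with an $\alpha_k$ either within $\omega$ itself (for the originally matched ones) or in $\eta$ (for $\beta_{k_l}$, matched with $\alpha_{k_l}$ at position $-l+1$).

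I will then take $V=\Sigma_\alpha(-p+1;\phi_\alpha(\eta\omega))$, which is a non-empty cylinder of the full shift $\Sigma_\alpha$, and verify the inclusion $V\cap K_\alpha\subset \psi_\alpha^{-1}(\Sigma_D(1;\omega))$. For each $i\in\{1,\dots,n\}$ with $\omega_i\in D_\alpha\cup D_0$ there is nothing to check since $\psi_\alpha$ acts as the identity on those symbols. For $i\in\{1,\dots,n\}$ with $\omega_i=\beta_k$, letting $j$ be the position of the matching $\alpha_k$ inside $\eta\omega$, the identity ${\rm red}(\omega_j\cdots\omega_{i})=1$ together with the fact that every proper prefix of $\omega_j\cdots\omega_i$ has strictly more $D_\alpha$-letters than $D_\beta$-letters translates to $H_{\alpha,j}(y)=H_{\alpha,i+1}(y)$ and $H_{\alpha,j'}(y)>H_{\alpha,i+1}(y)$ for all $j<j'\le i$, for every $y\in V\cap K_\alpha$. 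Hence $s_\alpha(i,y)=j$, and since $y_j=\alpha_k$ by construction of $V$, we obtain $(\psi_\alpha(y))_i=\beta_k=\omega_i$, as required.

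The main obstacle is the non-locality of $\psi_\alpha$ described above; once the prefix $\eta$ is chosen so as to close all pending right brackets of $\omega$, the rest of the argument is a routine translation between bracket matching in the Dyck-Motzkin monoid and the height-function inequalities that define $s_\alpha$. With the inclusion in hand, $\psi_\alpha^*(\nu)(\Sigma_D(1;\omega))\ge\nu(V\cap K_\alpha)=\nu(V)>0$, completing the proof.
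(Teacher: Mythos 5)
Your proof is correct, but it takes a genuinely different route from the paper's. The paper gives a short, soft topological argument: since $\psi_\alpha\colon K_\alpha\to B_\alpha$ is a homeomorphism (Lemma~\ref{include-lem}(a)), $B_\alpha$ is dense in $\Sigma_D$, and $K_\alpha$ is dense in $\Sigma_\alpha$ (Lemma~\ref{include-lem2}(b)), one argues that for any non-empty open $U\subset\Sigma_D$ the set $U\cap B_\alpha$ is non-empty and relatively open in $B_\alpha$, so $\psi_\alpha^{-1}(U\cap B_\alpha)=V\cap K_\alpha$ for some non-empty open $V\subset\Sigma_\alpha$; then $\psi_\alpha^*(\nu)(U)\geq\nu(V\cap K_\alpha)=\nu(V)>0$ by full support of $\nu$ and $\nu(K_\alpha)=1$. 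You instead unwind the definition of $\psi_\alpha$ and construct, for each admissible word $\omega$ of $\Sigma_D$, an explicit cylinder $V=\Sigma_\alpha(-p+1;\phi_\alpha(\eta\omega))$ that lands inside $\psi_\alpha^{-1}(\Sigma_D(1;\omega))$, where the prefix $\eta$ is chosen to close all unmatched right brackets of $\omega$. Your choice of $\eta$, the verification that $\eta\omega$ is admissible, and the translation of bracket matching into the height-function identities $H_{\alpha,j}(y)=H_{\alpha,i+1}(y)$ and $H_{\alpha,j'}(y)>H_{\alpha,i+1}(y)$ for $j<j'\leq i$ are all sound. The trade-off is that your combinatorial construction is more involved but entirely explicit, making the bracket-matching mechanism of $\psi_\alpha$ visible, whereas the paper's argument is shorter and essentially an abstract transport principle (a fully supported measure concentrated on a dense Borel set pushes forward under a homeomorphism onto another dense set to a fully supported measure); your approach has the incidental virtue of not invoking the density of $K_\alpha$ at all.
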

\begin{proof}

Since $\psi_\alpha$ is a homeomorphism by Lemma~\ref{include-lem}(a) and $K_\alpha$ is dense in $\Sigma_\alpha$
by Lemma~\ref{include-lem2}(b), if 
$\nu\in M(K_\alpha)$
is fully supported on $\Sigma_\alpha$ then
 $\nu\circ\psi_\alpha^{-1}(B_\alpha)=\nu(K_\alpha)=1$.  Since $B_\alpha$ is dense in $\Sigma_D$, $\psi_\alpha^*(\nu)$ is fully supported on $\Sigma_D$. 
\end{proof}

The next lemma is essentially due to Sigmund, shown in the proof of \cite[Theorem~B]{Sig77}.
Here we only add a supplementary proof.
\begin{lemma}\label{Sig-lem}
Let $\Sigma$ be a topologically mixing Markov shift. Let $\mu$, $\mu'\in M^{\rm e}(\Sigma)$ be distinct CO-measures, and
let $U$ be a convex open subset of $M^{\rm e}(\Sigma)$ that contains $\mu$, $\mu'$.
There is a homeomorphism
 $t\in[0,1]\mapsto\theta_t\in U$ onto its image
that is a high complexity path %lying in $U$ and 
joining $\mu$, $\mu'$.
\end{lemma}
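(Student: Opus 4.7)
The plan is to follow Sigmund's construction in the proof of \cite[Theorem~B]{Sig77} and then verify the three additional features required here: the path stays in the prescribed convex open neighborhood $U$, for all but countably many parameter values the intermediate measure is fully supported on $\Sigma$ and Bernoulli, and the map $t \mapsto \theta_t$ is injective, so that being continuous from the compact interval into a Hausdorff space it is automatically a homeomorphism onto its image.

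First, let $p$, $p'$ be periodic points of periods $n$, $n'$ carrying $\mu$, $\mu'$. Since $\Sigma$ is a topologically mixing Markov shift it satisfies Bowen's specification property, so Sigmund's construction yields, for each rational $t = a/(a+b) \in (0,1)$, a $CO$-measure supported on a periodic orbit that shadows the orbit of $p$ for $an$ iterates, shadows the orbit of $p'$ for $bn'$ iterates, and uses short connecting segments provided by specification. These $CO$-measures approximate $t\mu + (1-t)\mu'$ in the weak* topology with an error that tends to $0$ as $a+b \to \infty$, so by convexity and openness of $U$ they lie in $U$ once $a+b$ is large enough.

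To obtain a genuine continuous path rather than a dense collection of rational indices, I would reparameterize via fully supported mixing Markov measures on a topologically mixing sub-SFT of $\Sigma$ containing both periodic orbits (such a sub-SFT exists by choosing admissible connecting words guaranteed by topological mixing of $\Sigma$). For $t \in (0,1)$, take $\theta_t$ to be a mixing Markov measure whose transition matrix depends continuously on $t$, interpolates between matrices concentrated on the orbits of $p$ and $p'$, and has strictly positive off-diagonal entries in $(0,1)$ that vanish only at the endpoints. Weak* continuity of $t \mapsto \theta_t$ follows from continuity of the transition probabilities, and convergence to the correct $CO$-measures at $t=0$, $t=1$ follows from the degeneration of the transition matrix. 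By the Friedman--Ornstein theorem, each $\theta_t$ with $t \in (0,1)$ is Bernoulli, and it is fully supported by construction, giving (A1) with exceptional set $\{0,1\}$. Distinct $t \in (0,1)$ give distinct transition matrices and hence distinct Markov measures, while the endpoints $\theta_0$, $\theta_1$ are supported on proper subsets of $\Sigma$ and therefore differ from every interior $\theta_t$; thus the path is injective, (A2) holds trivially, and the homeomorphism property is immediate.

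The main obstacle will be to simultaneously enforce (i) that the interpolating Markov measures remain inside $U$, (ii) that the family depends continuously on $t$ up to and including the endpoints, and (iii) that the construction takes place on a single topologically mixing sub-SFT of $\Sigma$ supporting both periodic orbits with connecting words whose transition weights vanish smoothly at $t=0, 1$. The required uniform control of the weak* distance between $\theta_t$ and $t\mu + (1-t)\mu'$ — which is precisely what keeps the whole path inside $U$ — is the crux of Sigmund's specification-based argument in \cite{Sig77}, so we import that estimate here and supplement it only by the verification of Bernoullicity, full support, and injectivity described above.
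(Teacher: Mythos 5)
You diverge from the paper in a structural way: the paper treats Sigmund's proof of \cite[Theorem~B]{Sig77} as a black box --- reading off from it a topologically mixing Markov shift $\Sigma'$, a shift-commuting homeomorphism $\tau\colon\Sigma\to\Sigma'$, and a path $t\mapsto\theta_t$ that already lies in $U$, joins $\mu,\mu'$, and satisfies that $\theta_t\circ\tau^{-1}$ is a Markov measure for $t\in(0,1)$ --- and then adds only the observation that such Markov measures are Gibbs \cite{Bow75} and hence fully supported and Bernoulli \cite{FO70}, plus injectivity by inspecting Sigmund's construction. You instead try to rebuild the construction, and two of its essential points are not actually supplied.

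First, a $CO$-measure on $\Sigma$ with period $n>1$ is generally \emph{not} a one-step Markov measure over the alphabet of $\Sigma$: e.g.\ the orbit of $\overline{001}$ in the full $2$-shift admits no transition matrix $P$ whose stationary Markov chain reproduces it, since any $P$ with $P(0,0),P(0,1),P(1,0)>0$ gives positive mass to the word $0100$, which the orbit never sees. This is exactly why Sigmund passes to the higher-block conjugate $\Sigma'$, where both periodic orbits become simple cycles of distinct symbols and the endpoint transition matrices become well-defined. Your ``topologically mixing sub-SFT of $\Sigma$ containing both periodic orbits'' is a different object: if it is a proper subshift, Markov measures on it cannot be fully supported on $\Sigma$; if it is $\Sigma$ itself, your endpoint matrices do not exist. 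Second, keeping the path inside $U$ is precisely what a generic continuous interpolation of transition matrices does not deliver: the map from transition matrices to stationary Markov measures is nonlinear, so there is no reason for your $\theta_t$ to track the segment $t\mu+(1-t)\mu'$. You label this estimate as ``imported'' from Sigmund, but his estimate is proved for his specific parameterization (Bernoulli weights over specification-glued blocks on the recoded alphabet), and it does not transfer to an arbitrary interpolating family. These two items --- the recoding to $\Sigma'$ and the quantitative control forcing $\theta_t\in U$ --- are the heart of \cite[Theorem~B]{Sig77}, not supplements to it; as written, your argument replaces a citation by an incomplete reconstruction. The parts where you do overlap with the paper --- deducing Bernoullicity from the Markov structure via Friedman--Ornstein, full support from positivity of the transition probabilities, and injectivity from distinctness of the interpolating measures --- are sound.
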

\begin{proof} It was shown in the proof of \cite[Theorem~B]{Sig77} that there exist a topologically mixing Markov shift $\Sigma'$, a homeomorphism $\tau\colon\Sigma\to\Sigma'$ commuting with the shifts, and a path $t\in[0,1]\mapsto \theta_t\in U$ joining $\mu$, $\mu'$ such that $\theta_t\circ \tau^{-1}$ is a Markov measure for all $t\in(0,1)$.
In particular,  $\theta_t\circ\tau^{-1}$ is a Gibbs state \cite{Bow75} for all $t\in(0,1)$, and it is Bernoulli by \cite[Theorem~1.25]{Bow75} and \cite{FO70}.
Hence, $\theta_t$ is fully supported on $\Sigma$ and is  Bernoulli for all $t\in(0,1)$.
A close inspection into the proof of \cite[Theorem~B]{Sig77} shows the injectivity of $t\in[0,1]\mapsto\theta_t\in U$.
\end{proof}

\subsection{Proof of Proposition~\ref{join-lem}}\label{pfprop} 
We fix a countable dense subset $\{f_{n}\not\equiv0\colon n\in\mathbb N\}$ of $C(\Sigma_\alpha)$, and define 
a metric $d$ on $M(\Sigma_\alpha)$ by
\[d(\nu,\nu')=\sum_{n=1}^\infty\frac{|\int f_{n}{\rm d}\nu-\int f_{n}{\rm d}\nu'|}{2^n\|f_{n}\|_{C^0}}\ \text{ for } \nu, \nu'\in M(\Sigma_\alpha).\]
The weak* topology on
$M(\Sigma_\alpha)$ is metrizable by the metric $d$.
For $\nu\in M(\Sigma_\alpha)$ and $\delta>0$,
let $U_\alpha(\nu,\delta)$ denote the open ball of radius $\delta$ about $\nu$ with respect to $d$.
%Note that  $U_\alpha(\nu,\delta)$ is a convex set. 

Let $U$ be a convex open subset of $M(\Sigma_D)$ and let $\mu^+$, $\mu^-\in U\cap (M^{\rm e}_{0}(\Sigma_D)\cup M^{\rm e}_{\alpha}(\Sigma_D))$ be distinct measures.
The definition of $E_\alpha$ gives
\begin{equation}\label{Delta-eq}\int E_\alpha\circ\phi_\alpha {\rm d}\mu^+\geq1\ \text{ and }\
\int E_\alpha\circ\phi_\alpha {\rm d}\mu^-\geq1.\end{equation}
Since $U$ is open and $\psi_\alpha^*$ is continuous, there exists $q\in\mathbb N$ such that
 \begin{equation}\label{delta-d}\psi_\alpha^*\left(\left(U_\alpha\left(\mu^+\circ\phi_\alpha^{-1},q^{-1}\right)\cup U_\alpha\left(\mu^-\circ\phi_\alpha^{-1},q^{-1} \right)\right)\cap M(K_\alpha)\right)\subset U.\end{equation}
% \textcolor{green}{replace: $\mu\circ\phi_\alpha^{-1}\to\mu\circ\psi_\alpha$}
By \eqref{Delta-eq} and
 Proposition~\ref{per-approx}(a)(b), there exist sequences $\{\mu_n^+\}_{n\in\mathbb N}$,
$\{\mu_{n}^-\}_{n\in\mathbb N}$ of 
$CO$-measures in $U\cap M^{\rm e}_{\alpha}(\Sigma_D)$ converging to $\mu^+$,
$\mu^-$ respectively such that $\mu_1^+\neq\mu_1^-$, $\mu_n^+\neq\mu_{n+1}^+$ and $\mu_n^-\neq\mu_{n+1}^-$ for all $n\in\mathbb N$, and the following two conditions hold:

\begin{itemize}
\item[(B1)] For all $n\in\mathbb N$, 
$\int E_\alpha\circ\phi_\alpha{\rm d}\mu_n^+>1$ and $\int E_\alpha\circ\phi_\alpha{\rm d}\mu_{n}^->1$.

\item[(B2)] For all $n\in\mathbb N$,
 \[\max\{d(\mu^+\circ\phi_\alpha^{-1},\mu_n^+\circ\phi_\alpha^{-1}),d(\mu^-\circ\phi_\alpha^{-1},\mu_{n}^-\circ\phi_\alpha^{-1})\}<(n+q)^{-1}.\]
\end{itemize}

Since $E_\alpha$ is continuous, the set
\[V_\alpha=\left\{\nu\in M(\Sigma_\alpha)\colon \int E_\alpha{\rm d}\nu>1\right\}\]
 is a convex open subset of $M(\Sigma_\alpha)$.
For each $n\in\mathbb N$ we set 
\[\begin{split}V_n^+=U_\alpha\left(\mu^+\circ\phi_\alpha^{-1},(n+q)^{-1}\right)\cap V_\alpha\ \text{ and }\ W_n^+=\psi_\alpha^*(V_n^+\cap M(K_\alpha)),\\
V_n^-=U_\alpha\left(\mu^-\circ\phi_\alpha^{-1},(n+q)^{-1}\right)\cap V_\alpha\ \text{ and }\ W_n^-=\psi_\alpha^*(V_n^-\cap M(K_\alpha)).\end{split}\]
% and
% \[\begin{split}W_n^+&=\left\{\mu\in M(\Sigma_D)\colon \mu\circ\phi_\alpha^{-1}\in V_n^+\right\},\\  W_n^-&=\left\{\mu\in M(\Sigma_D)\colon  \mu\circ\phi_\alpha^{-1}\in V_n^-\right\}.\end{split}\]
Note that $V_n^+$, $V_n^-$ are convex open subsets of $M(\Sigma_\alpha)$, and they decrease as $n$ increases. By (B1) and (B2), $\mu_n^+\circ\phi_\alpha^{-1}$ belongs to $V_n^+\cap M(K_\alpha)$ and $\mu_n^-\circ\phi_\alpha^{-1}$ belongs to $V_n^-\cap M(K_\alpha)$.
%Since $\mu^+\circ\phi_\alpha^{-1}$ and $\mu^-\circ\phi_\alpha^{-1}$ are contained in the closure of $V_\alpha$ by \eqref{Delta-eq}, both $V_n^+$ and $V_n^-$ are non-empty.
%and decreasing sequences of convex open sets.
% it follows that $\{W_n^+\}_{n\in\mathbb N}$, $\{W_{n}^-\}_{n\in\mathbb N}$ are non-empty, decreasing sequences of open sets.
  By \eqref{delta-d} we have 
 \begin{equation}\label{U}W_n^+\cup W_n^-\subset U\ \text{ for all }n\in\mathbb N.\end{equation}
\begin{figure}
\begin{center}
\includegraphics[height=7.5cm,width=7.5cm]
{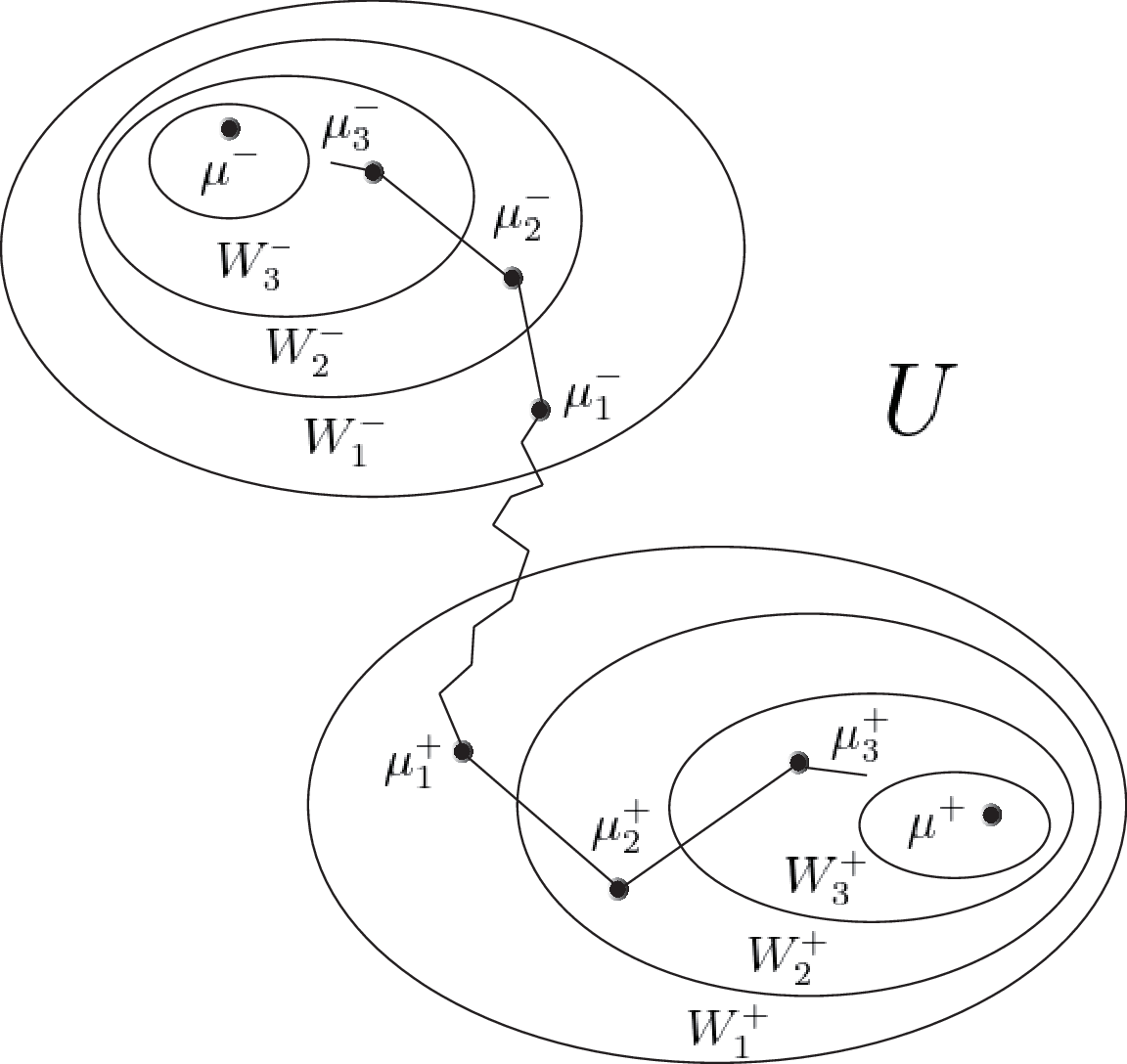}
\caption
{Each straight segment indicates the path that lies in $U\cap M_\alpha^{\rm e}(\Sigma_D)$ and joins the two $CO$-measures at its endpoints, which is obtained by applying Lemma~\ref{Sig-lem} once. Their concatenation lies in $U\cap M_\alpha^{\rm e}(\Sigma_D)$ and joins $\mu^+$, $\mu^-$.}\label{mu}
\end{center}
\end{figure}
If $\int E_\alpha\circ\phi_\alpha {\rm d}\mu^+>1$
(resp. $\int E_\alpha\circ\phi_\alpha {\rm d}\mu^+=1$), then $\mu^+\in W_n^+$ (resp. $\mu^+\notin W_n^+$) holds
for all $n\in\mathbb N$. 
Analogous statements hold for $\mu^-$. 
 The next lemma asserts that $W_n^+$ (resp. $W_{n}^-$) approaches to $\mu^+$ (resp. $\mu^-$) as $n\to\infty$.
See \textsc{Figure}~\ref{mu} for a schematic picture.

\begin{lemma}\label{contained-lem}
For any open subset $Y$ of $M(\Sigma_D)$ that contains $\mu^+$ (resp. $\mu^-$), there exists $n\in\mathbb N$ such that
$M^{\rm e}(\Sigma_D)\cap W_{n}^+\subset Y$ (resp. $M^{\rm e}(\Sigma_D)\cap W_{n}^-\subset Y$).
\end{lemma}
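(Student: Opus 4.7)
My plan is to argue by contradiction. Suppose the conclusion fails, so for every $n \in \mathbb{N}$ we may pick $\eta_n \in M^{\rm e}(\Sigma_D) \cap W_n^+$ with $\eta_n \notin Y$. Unwinding the definition $W_n^+ = \psi_\alpha^*(V_n^+ \cap M(K_\alpha))$, write $\eta_n = \psi_\alpha^*(\nu_n)$ with $\nu_n \in V_n^+ \cap M(K_\alpha)$. The inclusion $\nu_n \in V_n^+ \subset U_\alpha(\mu^+ \circ \phi_\alpha^{-1}, (n+q)^{-1})$ yields $d(\nu_n, \mu^+ \circ \phi_\alpha^{-1}) < (n+q)^{-1}$, so $\nu_n \to \mu^+ \circ \phi_\alpha^{-1}$ in the weak* topology of $M(\Sigma_\alpha)$.

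Next I would invoke the continuity of $\psi_\alpha^* \colon M(K_\alpha) \to M(\Sigma_D)$ recorded in Section~\ref{transport-esc} to pass to $\eta_n = \psi_\alpha^*(\nu_n) \to \psi_\alpha^*(\mu^+ \circ \phi_\alpha^{-1})$ weak* in $M(\Sigma_D)$. Combining Lemma~\ref{include-lem}(a), specifically $\psi_\alpha^{-1} = \phi_\alpha|_{B_\alpha}$, with the observation that $\mu^+(B_\alpha) = 1$ (which follows from $A_0 \cup A_\alpha \subset B_\alpha$ and $\mu^+ \in M^{\rm e}_0(\Sigma_D) \cup M^{\rm e}_\alpha(\Sigma_D)$), a direct computation identifies this limit measure with $\mu^+$. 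Since $Y$ is an open neighborhood of $\mu^+$, we must have $\eta_n \in Y$ for all sufficiently large $n$, contradicting $\eta_n \notin Y$. The statement for $\mu^-$ follows by the same argument with $+$ replaced by $-$ and $\mu^+\circ \phi_\alpha^{-1}$ replaced by $\mu^-\circ \phi_\alpha^{-1}$.

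The only subtlety in this plan is the invocation of weak* continuity of $\psi_\alpha^*$: because $\psi_\alpha$ is only a Borel embedding of $\Sigma_\alpha$ into $\Sigma_D$ (Proposition~\ref{emb-prop}) and admits no continuous extension to $\Sigma_\alpha$, this continuity must be understood as continuity of $\nu \mapsto \nu\circ\psi_\alpha^{-1}$ on measures supported on $K_\alpha$ with the ambient weak* topology from $M(\Sigma_\alpha)$. Once this ingredient is granted, every other step is an elementary identification through Lemma~\ref{include-lem} together with the openness of $Y$; no compactness argument or passage to subsequences is needed.
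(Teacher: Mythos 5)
Your argument is correct and is essentially the paper's own proof: both rest on the bound $d(\nu_n,\mu^+\circ\phi_\alpha^{-1})<(n+q)^{-1}$ coming from the definition of $V_n^+$, the continuity of $\psi_\alpha^*$ at $\mu^+\circ\phi_\alpha^{-1}$ (the very point you flag, and which the paper also invokes rather than reproves here), and the openness of $Y$. The only cosmetic differences are that the paper passes through an arbitrary convergent subsequence and identifies the limit with $\mu^+$ via the approximating $CO$-measures $\mu^+_{n}$, whereas you identify $\psi_\alpha^*(\mu^+\circ\phi_\alpha^{-1})=\mu^+$ directly from $\mu^+(B_\alpha)=1$ and $\psi_\alpha^{-1}=\phi_\alpha|_{B_\alpha}$ (and the continuity statement you cite is recorded in Section~\ref{pre-lem}, not Section~\ref{transport-esc}).
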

\begin{proof}
 Let
$\{\rho_{n}\}_{n\in\mathbb N}$ be a sequence in $M^{\rm e}(\Sigma_D)$ such that
$\rho_n\in M^{\rm e}(\Sigma_D)\cap W_{n}^+$ for all $n\in\mathbb N$.
Let $\{\rho_{n(j)}\}_{j\in\mathbb N}$ be an arbitrary convergent subsequence and 
let $\rho$ denote its limit measure.
By the definitions of $V_{n(j)}^+$ and $W_{n(j)}^+$ we have $d(\mu^+\circ\phi_\alpha^{-1},
\rho_{n(j)}\circ\phi_\alpha^{-1})<(n(j)+q)^{-1}$, and so $\rho_{n(j)}\circ\phi_\alpha^{-1}\to\mu^+\circ\phi_\alpha^{-1}$.
Meanwhile we have
$\psi_\alpha^*(\rho_{n(j)}\circ\phi_\alpha^{-1})=\rho_{n(j)}\to\rho$ 
and 
$\psi_\alpha^*(\mu_{n(j)}^+\circ\phi_\alpha^{-1})=\mu_{n(j)}^+\to\mu^+$, and
$\mu_{n(j)}^+\circ\phi_\alpha^{-1}\to\mu^+\circ\phi_\alpha^{-1}$ by (B2).
The continuity of $\psi_\alpha^*$ at $\mu^+\circ\phi_\alpha^{-1}$ yields $\rho=\mu^+$. Since 
$\{\rho_{n(j)}\}_{j\in\mathbb N}$ is an arbitrary convergent subsequence, the assertion of the lemma for $\mu^+$ follows.  
A proof of the assertion of the lemma for $\mu^-$ is analogous.
\end{proof}

%\begin{proof} Suppose by contradiction there exists an open set $U$ that contains $\mu^+$, and a sequence $\{\rho_{n}\}_{n\in\mathbb N}$ such that $\rho_n\in M^{\rm e}(\Sigma_D)\cap W_{n}^+-U$.  We have $\rho_{n}\circ\phi_\alpha^{-1}\to\mu^+\circ\phi_\alpha^{-1}$ as $n\to\infty$ in the weak* topology on $M(\Sigma_\alpha)$. Pick an weak* accumulation point of $\{\rho_{n}\}_{n\in\mathbb N}$ and denote it by $\rho$. Since $\mu^+$ is contained in the open set $U$, we have $\rho\neq\mu^+$.  Meanwhile, we have $\psi_{\alpha,*}(\rho_{n}\circ\phi_\alpha^{-1})=\rho_{n}\to\rho$,   and $\psi_{\alpha,*}(\mu_n^+\circ\phi_\alpha^{-1})=\mu_n^+\to\mu^+$ as $n\to\infty$ by (B2) and the continuity of $\psi_{\alpha,*}$. This yields a contradiction to the continuity of $\psi_{\alpha,*}$ at $\mu^+\circ\phi_\alpha^{-1}$. A proof of the statement of the lemma for $\mu^-$ is analogous. \end{proof}

By Lemma~\ref{Sig-lem}, for each $n\in\mathbb N$ there is a homeomorphism 
$t\in [0,1]\mapsto \theta_{n,t}^+\in V_n^+\cap M^{\rm e}(\Sigma_\alpha)$ onto its image such that $\theta_{n,0}^+=\mu_n^+\circ\phi_\alpha^{-1}$, $\theta_{n,1}^+=\mu_{n+1}^+\circ\phi_\alpha^{-1}$, and 
$\theta_{n,t}^+$ is fully supported on $\Sigma_\alpha$ and is Bernoulli 
for all $t\in(0,1)$.
Since $V_n^+\subset V_\alpha$, we have $\int E_\alpha{\rm d} \theta_{n,t}^+>1$
for all $t\in[0,1]$. Lemma~\ref{include-lem2}(a) gives
 $\theta_{n,t}^+\in M(K_\alpha)$ for all $t\in[0,1]$.
Hence, the measure $\mu_{n,t}^+=\theta_{n,t}^+\circ\psi_\alpha^{-1}$ is well-defined and belongs to $M^{\rm e}_{\alpha}(\Sigma_D)$ for all $t\in[0,1]$, and
 $t\in[0,1]\mapsto \mu_{n,t}^+\in M^{\rm e}_{\alpha}(\Sigma_D)$ is a homeomorphism onto its image.
This path
lies in $W_n^+$ and joins $\mu_n^+$, $\mu_{n+1}^+$ by 
 $ \mu_{n,0}^+=\mu_n^+$ and $ \mu_{n,1}^+=\mu_{n+1}^+$. By Lemma~\ref{support}, $\mu_{n,t}^+$ is fully supported on $\Sigma_D$ and is Bernoulli
 for all $t\in(0,1)$.
 In other words,
$t\in[0,1]\mapsto\mu_{n,t}^+$ is a high complexity path. We repeat the same argument to obtain
for each $n\in\mathbb N$
a high complexity path
$t\in[0,1]\mapsto \mu_{n,t}^-\in M^{\rm e}_{\alpha}(\Sigma_D)$ that lies in $W_n^-$ and joins $\mu_n^-$, $\mu_{n+1}^-$ by 
 $ \mu_{n,0}^-=\mu_n^-$ and $ \mu_{n,1}^-=\mu_{n+1}^-$.

%Since $V$ is convex, Lemma~\ref{Sig-lem} ensures the existence of a path that lies in $V$ and joins $\mu_1^+\circ\phi_\alpha^{-1}$, $\mu_1^-\circ\phi_\alpha^{-1}$. However, the pullback of this path may not lie in $U_0$.
%To construct a path joining the remaining $\mu_1^+$ and $\mu_1^-$, a different argument is needed. 
 \begin{lemma}\label{1-path}
There is a high complexity path that lies in $U\cap M_{\alpha}^{\rm e}(\Sigma_D)$ and joins $\mu_1^+$, $\mu_{1}^-$.
\end{lemma}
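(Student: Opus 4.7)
The plan is to derive Lemma~\ref{1-path} from a single application of Sigmund's construction (Lemma~\ref{Sig-lem}) on the full shift $\Sigma_\alpha$, and then transport the resulting path back to $\Sigma_D$ via the push-forward $\psi_\alpha^*$. Set $\nu^\pm := \mu_1^\pm \circ \phi_\alpha^{-1}$. By (B1) and Lemma~\ref{include-lem2}(a), $\nu^\pm \in V_\alpha \cap M(K_\alpha)$; because each $\mu_1^\pm$ is a $CO$-measure on $\Sigma_D$ and $\phi_\alpha$ commutes with the shift, $\nu^\pm$ are $CO$-measures on $\Sigma_\alpha$; and because $\psi_\alpha^*(\nu^\pm) = \mu_1^\pm$ (using $\psi_\alpha \circ \phi_\alpha|_{B_\alpha} = \mathrm{id}_{B_\alpha}$ and $\mu_1^\pm(B_\alpha) = 1$) together with $\mu_1^+ \neq \mu_1^-$, they are distinct.

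The key preparatory step is the construction of a convex open subset $\tilde V \subset M(\Sigma_\alpha)$ containing the line segment $[\nu^+,\nu^-]$ such that $\psi_\alpha^*(\tilde V \cap M(K_\alpha)) \subset U$. Affineness of $\psi_\alpha^*$ and convexity of $U$ yield $\psi_\alpha^*([\nu^+,\nu^-]) = [\mu_1^+,\mu_1^-] \subset U$; continuity of $\psi_\alpha^*$ on $M(K_\alpha)$, openness of $U$, and compactness of $[\nu^+,\nu^-]$ produce (via a finite subcover plus a Lebesgue-number argument) $\delta > 0$ such that the open $\delta$-neighborhood $\tilde V$ of $[\nu^+,\nu^-]$ in the metric $d$ has the required transport property. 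Convexity of $\tilde V$ follows from convexity of $d$-balls (which is immediate from the affineness of $\nu \mapsto \int f\,\mathrm{d}\nu$) combined with convexity of the segment $[\nu^+,\nu^-]$. Applying Lemma~\ref{Sig-lem} to $\Sigma_\alpha$, to the distinct $CO$-measures $\nu^\pm$, and to the convex open set $\tilde V \cap V_\alpha$ then produces a homeomorphism-onto-its-image high complexity path $t \mapsto \theta_t \in \tilde V \cap V_\alpha$ joining $\nu^+, \nu^-$. Since each ergodic $\theta_t$ lies in $V_\alpha$, Lemma~\ref{include-lem2}(a) gives $\theta_t \in M(K_\alpha)$, so $\mu_t := \psi_\alpha^*(\theta_t)$ is well defined.

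It remains to verify that $t \mapsto \mu_t$ is the desired high complexity path. Continuity of $\psi_\alpha^*$ on $M(K_\alpha)$ makes $t \mapsto \mu_t$ continuous; the choice of $\tilde V$ ensures $\mu_t \in U$; and $\mu_0 = \mu_1^+,\ \mu_1 = \mu_1^-$. Each $\mu_t$ is ergodic (transported by the measurable conjugacy $\psi_\alpha$) with $\mu_t(B_\alpha) = 1$; moreover $\theta_t = \mu_t \circ \phi_\alpha^{-1} \in V_\alpha$ translates to $\int G_0\,\mathrm{d}\mu_t > 0$, and Birkhoff's ergodic theorem then forces $\mu_t(A_\alpha) = 1$, placing $\mu_t$ in $M_\alpha^{\rm e}(\Sigma_D)$. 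For all but countably many $t$, the full support and Bernoulli property of $\theta_t$ on $\Sigma_\alpha$ transfer to $\mu_t$ on $\Sigma_D$ via Lemma~\ref{support} and the measurable isomorphism $\psi_\alpha$ respectively, yielding (A1); injectivity of $t \mapsto \theta_t$ and of $\psi_\alpha^*|_{M(K_\alpha)}$ gives (A2). The main obstacle is the construction of $\tilde V$: since $\psi_\alpha^*$ is only defined on $M(K_\alpha)$ and admits no continuous extension to $M(\Sigma_\alpha)$ (Proposition~\ref{emb-prop}), one cannot simply pull $U$ back to an open subset of $M(\Sigma_\alpha)$; the workaround leverages the convex-metric structure of $d$ and the compactness of the segment $[\nu^+,\nu^-]$ to produce a convex open neighborhood with the required transport property.
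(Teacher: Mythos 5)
Your proof is correct, and it takes a streamlined route compared with the paper's. Both arguments transport the segment $L=[\nu^+,\nu^-]=[\mu_1^+\circ\phi_\alpha^{-1},\,\mu_1^-\circ\phi_\alpha^{-1}]$ into $M(\Sigma_\alpha)$, apply Sigmund's construction (Lemma~\ref{Sig-lem}), and transport back via $\psi_\alpha^*$. The paper covers $L$ by a \emph{chain} of convex open sets $Q_1,\dots,Q_k$ each mapping into $U$ under $\psi_\alpha^*$, picks intermediate $CO$-measures $\nu_i\in Q_i\cap Q_{i+1}$ using the density of $M^{CO}(\Sigma_\alpha)$, applies Sigmund's lemma $k$ times to the consecutive pairs, and concatenates the resulting paths. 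You instead observe that the open $\delta$-neighborhood $\tilde V$ of the convex set $L$ with respect to the metric $d$ is itself convex, since $d$ satisfies $d(t\mu_1+(1-t)\mu_2,t\sigma_1+(1-t)\sigma_2)\le t\,d(\mu_1,\sigma_1)+(1-t)\,d(\mu_2,\sigma_2)$ by the affineness of $\nu\mapsto\int f\,{\rm d}\nu$; combining this with the compactness of $L$ and pointwise continuity of $\psi_\alpha^*$ on $M(K_\alpha)$ yields a single convex open set with the transport property, so one application of Lemma~\ref{Sig-lem} to the $CO$-measures $\nu^\pm$ in $\tilde V\cap V_\alpha$ suffices. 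This buys you a proof without the chain bookkeeping, without invoking the density of $M^{CO}(\Sigma_\alpha)$ for the intermediate points, and with (A2) following trivially from the injectivity of the transported homeomorphism rather than from piecewise injectivity of a concatenation. All your supporting checks — $\nu^\pm$ are distinct $CO$-measures in $V_\alpha\cap M(K_\alpha)$, the path stays in $M(K_\alpha)$ by Lemma~\ref{include-lem2}(a), $\int G_0\,{\rm d}\mu_t>0$ together with Birkhoff places $\mu_t$ in $M_\alpha^{\rm e}(\Sigma_D)$, and $\psi_\alpha^*$ preserves full support (Lemma~\ref{support}) and the Bernoulli property — are correct.
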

\begin{proof}
 Consider the set
 $L=
\{t\mu_1^+\circ\phi_\alpha^{-1}+(1-t)\mu_{1}^-\circ\phi_\alpha^{-1}\colon t\in[0,1]\}.$
By (B1) and Lemma~\ref{include-lem2}(a) we have
$\mu_1^+\circ\phi_\alpha^{-1}$, $\mu_1^-\circ\phi_\alpha^{-1}\in M(K_\alpha)$. Hence
 $L\subset M(K_\alpha)$ holds.
Since $\psi_\alpha^*(\mu_1^+\circ\phi_\alpha^{-1})=\mu_1^+\in U$, $\psi_\alpha^*(\mu_1^-\circ\phi_\alpha^{-1})=\mu_1^-\in U$ and $U$ is convex, we have
 $\psi_\alpha^*(L)=\{t\mu_1^++(1-t)\mu_{1}^-\colon t\in[0,1]\}\subset U.$
Since $\psi_\alpha^*$ is continuous, $L\subset V_\alpha$ and
 $L$ is a compact subset of $M(\Sigma_\alpha)$, there exist an integer $k\geq3$ and convex open subsets $Q_1,\ldots,Q_k$ of $M(\Sigma_\alpha)$ such that: 
\begin{itemize}

\item[(i)] $(Q_i\cap Q_{i+1})\cap L\neq\emptyset$ for $i=1,\ldots,k-1$;

\item[(ii)] $L\subset\bigcup_{i=1}^k Q_i\subset V_\alpha$ and  $\bigcup_{i=1}^k\psi_\alpha^*(Q_i\cap M(K_\alpha))\subset U$;

\item[(iii)] $\mu_1^+\circ\phi_\alpha^{-1}\in Q_1$ and $\mu_{1}^-\circ\phi_\alpha^{-1}\in Q_k$.

\end{itemize}
%Since $M^{\rm e}(\Sigma_\alpha)$ is  %($G_\delta$ dense)  dense in $M(\Sigma_{\alpha})$(see \cite[Theorem~4]{Sig70}) and $M^{CO}(\Sigma_\alpha)$ is dense in $M^{\rm e}(\Sigma_\alpha)$ \textcolor{red}{REF},
Since $M^{CO}(\Sigma_\alpha)$ is   
dense in $M(\Sigma_{\alpha})$
by \cite[Theorem~1]{Sig70},
 (i) implies
  $(Q_i\cap Q_{i+1})\cap M^{CO}(\Sigma_\alpha)\neq\emptyset$ for $i=1,\ldots,k-1$.
For each $i\in\{1,\ldots,k-1\}$ we fix $\nu_i\in(Q_i\cap Q_{i+1})\cap M^{CO}(\Sigma_\alpha)$ such that 
$\mu_1^+\circ\phi_\alpha^{-1}\neq\nu_1$, 
$\nu_i\neq\nu_{i+1}$ for $i=1,\ldots,k-2$
and $\nu_k\neq\mu_{1}^-\circ\phi_\alpha^{-1}$.
By Lemma~\ref{Sig-lem}, the following statements hold:

\begin{itemize}
\item There is a high complexity path that lies in  
$Q_1\cap M^{\rm e}(\Sigma_\alpha)$ and joins $\mu_1^+\circ\phi_\alpha^{-1}$, $\nu_1$. 

\item For each $i\in\{1,\ldots,k-2\}$, there is a high complexity path that lies in  
$Q_{i+1}\cap M^{\rm e}(\Sigma_\alpha)$ and
joins $\nu_i$, $\nu_{i+1}$.

\item There is a high complexity path that lies in  
$Q_{k}\cap M^{\rm e}(\Sigma_\alpha)$ and joins $\nu_k$,
$\mu_{1}^-\circ\phi_\alpha^{-1}$.

\end{itemize}
 Concatenating all these paths 
yields a path that lies in 
$(\bigcup_{i=1}^k Q_i)\cap M^{\rm e}(\Sigma_\alpha)$ 
and joins $\mu_1^+\circ\phi_\alpha^{-1}$, 
$\mu_{1}^-\circ\phi_\alpha^{-1}$.
From (ii) and Lemma~\ref{include-lem2}(a), this path can be transported %@pulled back
to a high complexity path that lies in
$U\cap M^{\rm e}_{\alpha}(\Sigma_D)$ and joins $\mu_1^+$, $\mu_1^-$. 
\end{proof}

By Lemma~\ref{1-path},
there is a high complexity path $t\in [0,1]\mapsto \mu_t^0\in U\cap M^{\rm e}_{\alpha}(\Sigma_D)$ 
that joins $\mu_1^+$, $\mu_1^-$ by  $\mu_0^0=\mu_1^+$ and $\mu_1^0=\mu_{1}^-$.
We define a map $t\in[0,1]\mapsto\mu_t\in U$ by
\[\mu_t=\begin{cases}\mu^+ &\text{for }t=0,\\
\mu_{n,2^{n+2}(t-2^{-n-2})}^+&\text{for }t\in[2^{-n-2},2^{-n-1}],\ n\in\mathbb N,\\
\mu_{2(t-1/4)}^0 &\text{for }t\in[1/4,3/4],\\
\mu_{n,2^{-n+2}(t-1+2^{n-1})}^-&\text{for }t\in[1-2^{n-1},1-2^{n-2}],\ n\in\mathbb N,\\
\mu^- &\text{for }t=1.\end{cases}\]
The construction gives
$\{\mu_t\colon t\in[2^{-n-2},2^{-n-1}]\}=\{\mu_{n,t}^+\colon t\in[0,1]\}\subset W_n^+$
and
 $\{\mu_t\colon t\in[1-2^{n-1},1-2^{n-2}]\}=\{\mu_{n,t}^-\colon t\in[0,1]\}\subset W_n^-$ for all $n\in\mathbb N$. So,  Lemma~\ref{contained-lem} implies the continuity of $t\mapsto\mu_t$ at $t=0$ and $t=1$. Hence
 this path lies in
 $U\cap (M^{\rm e}_{0}(\Sigma_D)\cup M^{\rm e}_{\alpha}(\Sigma_D))$
 and joins $\mu^+$, $\mu^-$. See \textsc{Figure}~\ref{mu} for a schematic picture.
Since 
it is piecewise homeomorphic, the set
$\{t\in[0,1]\colon\mu_t=\mu\}$ is countable
for all $\mu\in \{\mu_t\colon t\in[0,1]\}$.
Therefore, this path is a high complexity path.
This completes the proof of Proposition~\ref{join-lem}.
\qed

\subsection{More on path connectedness }\label{add-sec}
In addition to Proposition~\ref{cor-path-1}, the following statement is of independent interest.
 
\begin{prop}\label{cor-path-2}
The spaces $M_{\alpha}^{\rm e}(\Sigma_D)$ and $M_{\beta}^{\rm e}(\Sigma_D)$ are path connected and locally path connected.\end{prop}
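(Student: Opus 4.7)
The plan is to promote Proposition~\ref{join-lem} to a statement about $M_\alpha^{\rm e}(\Sigma_D)$ (and, by the same argument, $M_\beta^{\rm e}(\Sigma_D)$) on its own, by inspecting exactly where in $M(\Sigma_D)$ the high-complexity paths of Proposition~\ref{join-lem} live. The only substantive observation to extract is that the interior of the concatenated path $t\mapsto\mu_t$ constructed in Section~\ref{pfprop} never visits $M_0^{\rm e}(\Sigma_D)$; only the two endpoints $\mu^+,\mu^-$ can a priori belong to $M_0^{\rm e}(\Sigma_D)$.

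More precisely, I would first recall that each constituent path appearing in the proof of Proposition~\ref{join-lem}, namely $t\mapsto \mu_{n,t}^+$, $t\mapsto \mu_{n,t}^-$, and $t\mapsto \mu_t^0$, is defined as the $\psi_\alpha^*$-image of a path $\theta_{n,t}^\pm$ (respectively $\theta_t^0$) that lies in $V_\alpha \cap M^{\rm e}(\Sigma_\alpha)$. Along any such path one has $\int E_\alpha\,{\rm d}\theta>1$, so by the Birkhoff argument used in the proof of Lemma~\ref{include-lem2}(a) the $D_\alpha$-symbols eventually outnumber the $D_\beta$-symbols; consequently the transported measure $\theta\circ\psi_\alpha^{-1}$ is concentrated on $A_\alpha$ and therefore belongs to $M_\alpha^{\rm e}(\Sigma_D)$. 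This is precisely the assertion already recorded in the proof of Proposition~\ref{join-lem} that "$\mu_{n,t}^+\in M_\alpha^{\rm e}(\Sigma_D)$ for all $t\in[0,1]$"; the same conclusion applies to $\mu_{n,t}^-$ and to the path supplied by Lemma~\ref{1-path}.

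With this in hand, I would apply Proposition~\ref{join-lem} with $\gamma=\alpha$ to an arbitrary pair of distinct measures $\mu^+,\mu^-\in U\cap M_\alpha^{\rm e}(\Sigma_D)\subset U\cap(M_0^{\rm e}(\Sigma_D)\cup M_\alpha^{\rm e}(\Sigma_D))$, where $U$ is any convex open subset of $M(\Sigma_D)$. By the previous paragraph, every interior point of the resulting path is in $M_\alpha^{\rm e}(\Sigma_D)$, and by assumption so are the two endpoints; hence the whole path lies in $U\cap M_\alpha^{\rm e}(\Sigma_D)$. Taking $U=M(\Sigma_D)$ yields the path connectedness of $M_\alpha^{\rm e}(\Sigma_D)$. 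For local path connectedness, I would use that every point of $M(\Sigma_D)$ admits a neighborhood base of convex open sets: given $\mu\in M_\alpha^{\rm e}(\Sigma_D)$ and an open neighborhood of the form $U\cap M_\alpha^{\rm e}(\Sigma_D)$ in the subspace topology with $U$ convex and open in $M(\Sigma_D)$, the same application of Proposition~\ref{join-lem} shows that $U\cap M_\alpha^{\rm e}(\Sigma_D)$ is itself path connected.

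The case of $M_\beta^{\rm e}(\Sigma_D)$ is handled verbatim, with $\psi_\beta^*$ and $E_\beta$ in place of $\psi_\alpha^*$ and $E_\alpha$. I do not foresee any real obstacle; the only point that requires verification, namely that the high-complexity path of Proposition~\ref{join-lem} avoids $M_0^{\rm e}(\Sigma_D)$ away from its endpoints, is read off directly from the construction in Section~\ref{pfprop}, so the present proposition is a clean corollary of what has already been proved.
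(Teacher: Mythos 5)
Your proposal is correct and matches the paper's approach: the paper likewise derives Proposition~\ref{cor-path-2} by observing that the construction in the proof of Proposition~\ref{join-lem} already produces a path whose interior lies in $M_\gamma^{\rm e}(\Sigma_D)$ (as the paper records explicitly when it states $\mu_{n,t}^\pm\in M_\alpha^{\rm e}(\Sigma_D)$ for all $t\in[0,1]$ and invokes Lemma~\ref{1-path}), so that if both endpoints are chosen in $M_\gamma^{\rm e}(\Sigma_D)$ the entire path stays there. The only cosmetic difference is that the paper phrases this as ``slightly modifying the proof of Proposition~\ref{join-lem}'' rather than reading it off directly as you do; otherwise the argument for local path connectedness via the neighborhood base of convex open sets is identical.
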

\begin{proof} Let $\gamma\in\{\alpha,\beta\}$.
Slightly modifying the proof of Proposition~\ref{join-lem}, one can show that for any convex open subset $U$ of $M(\Sigma_D)$ and for any pair of distinct measures in $\mu^+$, $\mu^-\in U\cap M^{\rm e}_{\gamma}(\Sigma_D)$, there exists a high complexity path that lies in $U\cap M^{\rm e}_{\gamma}(\Sigma_D)$ and joins them.
The path connectedness of $M_{\gamma}^{\rm e}(\Sigma_D)$ is a consequence of this with $U=M(\Sigma_D)$.
Since any point in $M(\Sigma_D)$ has a neighborhood base consisting of convex open sets,
the local path connectedness of $M_{\gamma}^{\rm e}(\Sigma_D)$ is also a consequence.
\end{proof}

%The proof of Corollary~\ref{cor-path-2} is analogous to that of Corollary~\ref{cor-path-1} and hence omitted.

\section{Proofs of the main results}
We are almost ready to complete the proofs of the main results of this paper.
In Section~\ref{FA} we collect a few ingredients on functional analysis. In Section~\ref{pfthma} we complete the proof of Theorem~A. In Section~\ref{pfthmb} we complete the proof of Theorem~B.
\subsection{Functional analysis}\label{FA}
The proof of Theorem~A(b) requires the same set of functional analytic ingredients in \cite[Section~3.1]{STY24}. Here we copy them for the reader's convenience.

For a Banach space $V$ with a norm $\|\cdot\|$,
let $V^*$ denote 
the set of real-valued bounded linear functionals on $V$. For each $\mu\in V^*$
let $\|\mu\|$ denote the norm
\[\|\mu\|=\sup\left\{|\mu(f)|\colon f\in V,\ \|f\|=1\right\}.\] Let $\Lambda$, $\mu\in V^*$. We say:
\begin{itemize}

\item 
$\mu$ is {\it tangent} to $\Lambda$ at $f\in V$ if $\mu(f)\leq\Lambda(f+g)-\Lambda(f)$ holds for all $g\in V$.

\item 
 $\mu$ is {\it bounded} by $\Lambda$ if $\mu(f)\leq\Lambda(f)$  holds for all $f\in V$.

\item 
 $\Lambda$ is {\it convex} if 
 $\Lambda(tf+(1-t)g)\leq t\Lambda(f)+(1-t)\Lambda(g)$ holds for all $f$, $g\in V$ and $t\in[0,1]$.
\end{itemize}

%The following is known as Bishop-Phelps's theorem.
\begin{thm}[\cite{Isr79}, Theorem~V.1.1]\label{bis}
Let $V$ be a Banach space and let $\Lambda\in V^*$ be convex and continuous.
%Let $\Delta$ %\colon V\to \mathbb R$ be a convex continuous functional on a Banach space $V$. 
For any $\mu\in V^*$ that is bounded by $\Lambda$, any $f\in V$ and any $\varepsilon>0$, there exist $\tilde\mu\in V^*$ and $\tilde f\in V$ such that $\tilde\mu$ is tangent to $\Lambda$ at $\tilde f$ and
\[\|\tilde\mu-\mu\|\leq\varepsilon\ \text{ and }\
\|\tilde f-f\|\leq\frac{1}{\varepsilon}(\Lambda(f)-\mu(f)+s),\]
where $s=\sup\{\mu(g)-\Lambda(g)\colon g\in V\}\leq0$.
\end{thm}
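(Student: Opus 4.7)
My plan is to prove this Bishop-Phelps-Br\o{}ndsted-Rockafellar type statement by combining Ekeland's variational principle with a Hahn-Banach subgradient argument. The key observation is that ``$\mu$ is bounded by $\Lambda$'' together with the definition of $s$ means that $\varphi := \Lambda - \mu$ is a continuous convex function on the Banach space $V$ satisfying $\inf_V \varphi = -s$, so that the excess of $\varphi$ over its infimum at the given point $f$ equals exactly $\varphi(f) + s = \Lambda(f) - \mu(f) + s \geq 0$. This excess will serve as the error budget driving the argument; the condition $s \leq 0$ (forced by $\mu \leq \Lambda$) ensures that $s$ is finite and hence that $\varphi$ is bounded below, so Ekeland's principle applies.

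First I would apply Ekeland's variational principle to $\varphi$ at the near-minimizer $f$, tuning its parameters so that the resulting slope is $\varepsilon$ and the resulting displacement bound is $\delta/\varepsilon$, where $\delta := \Lambda(f) - \mu(f) + s$. This yields a point $\tilde f \in V$ with $\|\tilde f - f\| \leq \delta/\varepsilon$ together with the global inequality
\[
\varphi(\tilde f + h) \geq \varphi(\tilde f) - \varepsilon \|h\| \qquad \text{for every } h \in V.
\]
Equivalently, the translated function $\psi(h) := \varphi(\tilde f + h) - \varphi(\tilde f)$ is continuous and convex, satisfies $\psi(0) = 0$, and is bounded below by $-\varepsilon \|\cdot\|$.

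Next I would convert this variational inequality into a genuine tangent functional via Hahn-Banach. Since $\psi$ is continuous and convex with $\psi(0) = 0$ and $\psi \geq -\varepsilon \|\cdot\|$ globally, separating the epigraph of $\psi$ from the closed convex cone $\{(h,r) \in V \times \mathbb{R} : r \leq -\varepsilon \|h\|\}$ at their common boundary point $(0,0)$ produces a functional $\nu \in V^*$ with $\|\nu\| \leq \varepsilon$ and $\nu(h) \leq \psi(h)$ for all $h \in V$. Setting $\tilde\mu := \mu + \nu$ yields $\|\tilde\mu - \mu\| = \|\nu\| \leq \varepsilon$ and, after unpacking the definition of $\psi$, the tangent relation $\tilde\mu(g) \leq \Lambda(\tilde f + g) - \Lambda(\tilde f)$ for every $g \in V$. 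The step requiring the most care is the bookkeeping in Ekeland's principle: its two free parameters must be matched so as to deliver simultaneously the target slope $\varepsilon$ and the stated displacement bound $\delta/\varepsilon$. The Hahn-Banach separation is then routine because $\psi$ is continuous at the origin.
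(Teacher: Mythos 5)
Your argument is correct, but note that the paper itself gives no proof of this statement: it is quoted verbatim from Israel's book [Isr79, Theorem V.1.1], so there is no internal proof to match. What you have written is the standard Br\o ndsted--Rockafellar route: setting $\varphi=\Lambda-\mu$, the boundedness hypothesis gives $s\le 0$ and $\inf_V\varphi=-s$, so $\delta:=\Lambda(f)-\mu(f)+s$ is exactly the excess of $\varphi$ at $f$ over its infimum; Ekeland's principle with parameters $(\delta,\delta/\varepsilon)$ gives $\tilde f$ with $\|\tilde f-f\|\le\delta/\varepsilon$ and $\varphi(\tilde f+h)\ge\varphi(\tilde f)-\varepsilon\|h\|$, and the sandwich/separation step (the convex function $\psi(h)=\varphi(\tilde f+h)-\varphi(\tilde f)$ dominates the concave function $-\varepsilon\|\cdot\|$ with equality at $0$, the epigraph has interior because $\psi$ is continuous, and the separating hyperplane is non-vertical) produces $\nu\in V^*$ with $\nu\le\psi$ and, after testing against $\pm h$, $\|\nu\|\le\varepsilon$, whence $\tilde\mu=\mu+\nu$ is tangent to $\Lambda$ at $\tilde f$ with the stated bounds. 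The only loose ends are routine: the degenerate case $\delta=0$, where the Ekeland tuning collapses and you should simply take $\tilde f=f$, and the explicit verification that the separating functional has nonzero vertical component (forced because the epigraph projects onto all of $V$). By contrast, Israel's own proof follows the classical Bishop--Phelps scheme, supporting the epigraph by a cone found via a maximality (Zorn's lemma) argument rather than via Ekeland's variational principle; the two devices are well-known equivalents, and your version is the more streamlined modern one while Israel's is self-contained at the level of ordered sets. So the proposal is acceptable as a standalone proof of the cited theorem.
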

% For a (finite) signed Borel measure $\mu$ on a compact metric space $X$, let
% $\mu(f)$ denote the integral of $f\in C(X)$ against $\mu$.
For a continuous map $T$ of a compact metric space $X$, 
the functional $\Lambda_T$ on $C(X)$ given by \eqref{alphaT}
%\begin{equation}\label{lambdat}\Lambda_T(f)=\max\left\{\mu(f)\colon\mu\in M(X,T)\right\}.\end{equation}
%Clearly $\Lambda_T$ 
is convex and continuous.
The next lemma characterizes maximizing measures in terms of $\Lambda_T$. %For an interpretation, 
Recall that 
 $C(X)^*$ can be identified with the set of (finite) signed Borel measures on $X$ by
 Riesz's representation theorem.

\begin{lemma}[\cite{Bre08}, Lemma~2.3]\label{Bre}
Let $T$ be a continuous map of a compact metric space $X$ and
let $f\in C(X)$. Then $\mu\in C(X)^*$ is tangent to $\Lambda_T$ at $f$ if and only if
$\mu$ belongs to $M(X,T)$ and is $f$-maximizing.
\end{lemma}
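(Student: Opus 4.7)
The plan is to verify both directions directly from the definition of tangency, namely the inequality $\mu(g)\leq \Lambda_T(f+g)-\Lambda_T(f)$ for all $g\in C(X)$. The key observation, used throughout, is that $\Lambda_T$ is translation-equivariant under constants and insensitive to coboundaries: $\Lambda_T(f+c)=\Lambda_T(f)+c$ for $c\in\mathbb R$, and $\Lambda_T(f+h\circ T-h)=\Lambda_T(f)$ for $h\in C(X)$.

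For the ($\Leftarrow$) direction, if $\mu\in M(X,T)$ is $f$-maximizing then for any $g\in C(X)$
\[
\mu(g)=\int(f+g)\,{\rm d}\mu-\int f\,{\rm d}\mu\leq\Lambda_T(f+g)-\Lambda_T(f),
\]
using $\mu\in M(X,T)$ for the inequality and maximality for the second equality. This is essentially a one-line computation.

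For the ($\Rightarrow$) direction, assume $\mu\in C(X)^*$ is tangent to $\Lambda_T$ at $f$. I would extract the four defining properties of an $f$-maximizing invariant probability measure by plugging in carefully chosen test functions $g$. First, for positivity, I would take $g=-h$ with $h\geq 0$ and use $\Lambda_T(f-h)\leq\Lambda_T(f)$ to deduce $\mu(h)\geq 0$. Next, for normalization, taking $g=c\cdot\mathbf 1$ yields $c\mu(\mathbf 1)\leq c$ for every $c\in\mathbb R$, hence $\mu(\mathbf 1)=1$; combined with positivity, Riesz's representation theorem identifies $\mu$ with a Borel probability measure. For $T$-invariance, I would substitute $g=h\circ T-h$ and $g=h-h\circ T$ for arbitrary $h\in C(X)$; since these coboundaries satisfy $\Lambda_T(f+g)=\Lambda_T(f)$, both $\mu(g)\leq 0$ and $\mu(-g)\leq 0$, giving $\int h\circ T\,{\rm d}\mu=\int h\,{\rm d}\mu$ and hence $\mu\in M(X,T)$. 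Finally, for maximality, plugging $g=-f$ gives $-\mu(f)\leq\Lambda_T(0)-\Lambda_T(f)=-\Lambda_T(f)$, so $\mu(f)\geq\Lambda_T(f)$; the reverse inequality is automatic from $\mu\in M(X,T)$.

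I do not anticipate any serious technical obstacle; the only step that requires a small observation beyond the definitions is the coboundary computation $\Lambda_T(f+h\circ T-h)=\Lambda_T(f)$, which powers the verification of $T$-invariance. Everything else is an immediate consequence of choosing the right $g$ in the tangency inequality.
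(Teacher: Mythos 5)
Your proof is correct. The paper does not prove this lemma at all --- it is quoted verbatim from Br\'emont \cite{Bre08}, Lemma~2.3 --- and your direct verification (testing the tangency inequality with $-h$ for $h\geq 0$, with constants, with coboundaries $h\circ T-h$, and with $-f$, then invoking Riesz representation) is precisely the standard argument behind that citation, with no gaps: the only facts you use, $\Lambda_T(f+c)=\Lambda_T(f)+c$, $\Lambda_T(f-h)\leq\Lambda_T(f)$ for $h\geq 0$, and $\Lambda_T(f+h\circ T-h)=\Lambda_T(f)$, all follow immediately from the definition \eqref{alphaT} and invariance of the measures in the supremum.
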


%Let $X$ be a topological space. 
%For a (finite) signed Borel measure $\mu$ on a compact metric space $X$, let $\|\mu\|$ denote the norm of $\mu$ as a bounded liner functional on $C(X)$, namely
%\[\|\mu\|=\sup\left\{\left|\int f{\rm d}\mu\right|\colon f\in C(X),\ \|f\|_{C^0}=1\right\}.\]
%The following lemma is a special case of \cite[Appendix~5]{Rue04}. For completeness we include a proof in Appendix~\ref{A2}.
If $T$ is a continuous map of $X$, then
for any $\mu\in M(X,T)$ there exists a unique Borel probability measure $b_\mu$ on $M(X,T)$ such that $b_\mu(M^{\rm e}(X,T))=1$ and
$\mu=\int_{M^{\rm e}(X,T)} \nu{\rm d}b_\mu(\nu)$. We call $b_\mu$ the barycenter of $\mu$. Put
\[{\rm supp}(b_\mu)
=\bigcap\{F\colon\text{$F\subset M(X,T)$, closed, $b_\mu(F)=1$}\}.\]
Since $M(X,T)$ has a countable base, we have $b_\mu({\rm supp}(b_\mu))=1$.
\begin{lemma}[\cite{STY24}, Lemma~3.3]\label{supp-lem}
Let $T$ be a continuous map of a 
compact metric space $X$. 

\begin{itemize}

\item[(a)]  If there exists a constant $C\geq0$ such that $h(\nu)\leq C$ for all $\nu\in {\rm supp}(b_\mu)$, then $h(\mu)\leq C$.

\item[(b)] Let $f\in C(X)$, $\mu\in M_{\rm max}(f)$. Then ${\rm supp}(b_\mu)$ is contained in $M_{\rm max}(f)$.
\end{itemize}
 \end{lemma}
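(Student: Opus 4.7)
The plan is to derive both parts from the ergodic decomposition identity $\mu = \int_{M^{\rm e}(X,T)}\nu\,{\rm d}b_\mu(\nu)$ together with the affinity of the measure-theoretic entropy (Jacobs' theorem), the continuity of $\nu\mapsto\int f\,{\rm d}\nu$ on $M(X,T)$, and the standard fact that $b_\mu({\rm supp}(b_\mu))=1$ together with the minimality of ${\rm supp}(b_\mu)$ as a closed set of full $b_\mu$-measure. Both assertions then reduce to showing that the relevant inequality or equality holds $b_\mu$-almost everywhere on $M^{\rm e}(X,T)$, and then promoting this to a statement about the topological support.

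For (a), I would first invoke the affinity of entropy under the ergodic decomposition to write
\[
h(\mu)=\int_{M^{\rm e}(X,T)}h(\nu)\,{\rm d}b_\mu(\nu).
\]
Since $b_\mu$ is a probability measure with $b_\mu({\rm supp}(b_\mu))=1$, the integral on the right equals $\int_{{\rm supp}(b_\mu)}h(\nu)\,{\rm d}b_\mu(\nu)$. The hypothesis $h(\nu)\leq C$ for every $\nu\in{\rm supp}(b_\mu)$ then yields $h(\mu)\leq C$ directly. No subtlety is needed here once Jacobs' theorem is cited, since the bound on $h$ is pointwise on the support rather than $b_\mu$-almost everywhere.

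For (b), set $\alpha=\Lambda_T(f)$. Since $\nu\mapsto\int f\,{\rm d}\nu$ is continuous on $M(X,T)$ and bounded above by $\alpha$, the ergodic decomposition gives
\[
\alpha=\int f\,{\rm d}\mu=\int_{M^{\rm e}(X,T)}\!\left(\int f\,{\rm d}\nu\right){\rm d}b_\mu(\nu),
\]
so the nonnegative integrand $\alpha-\int f\,{\rm d}\nu$ has zero integral against $b_\mu$, and therefore vanishes $b_\mu$-almost surely. Consequently $b_\mu(M_{\rm max}(f))=1$. Because $M_{\rm max}(f)=\{\nu\in M(X,T):\int f\,{\rm d}\nu=\alpha\}$ is closed (as a level set of a continuous functional) and ${\rm supp}(b_\mu)$ is by definition the intersection of all closed sets of full $b_\mu$-measure, we conclude ${\rm supp}(b_\mu)\subset M_{\rm max}(f)$.

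There is no real obstacle in either part; the proof is essentially bookkeeping around the ergodic decomposition. The one point that merits care is to keep the distinction between a $b_\mu$-almost everywhere statement on $M^{\rm e}(X,T)$ and a pointwise statement on ${\rm supp}(b_\mu)$: for (a) the hypothesis is already pointwise, while for (b) one must use that the target set $M_{\rm max}(f)$ is closed in order to pass from the almost-sure statement to the inclusion of supports.
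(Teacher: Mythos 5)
Your proof is correct and uses the expected route: for (a), Jacobs' theorem (affinity of entropy under the ergodic decomposition, $h(\mu)=\int_{M^{\rm e}(X,T)}h(\nu)\,{\rm d}b_\mu(\nu)$) combined with $b_\mu({\rm supp}(b_\mu))=1$ and the pointwise bound on the support; for (b), the equality $\int f\,{\rm d}\mu=\int\left(\int f\,{\rm d}\nu\right){\rm d}b_\mu(\nu)$, the nonnegativity of $\Lambda_T(f)-\int f\,{\rm d}\nu$, and the closedness of $M_{\rm max}(f)$ to pass from a $b_\mu$-a.e.\ statement to an inclusion of supports. This is the standard argument for \cite[Lemma~3.3]{STY24}, which the paper cites without reproving, and your reasoning closes the gap cleanly.
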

The next lemma asserts that the barycenter map $\mu\mapsto b_\mu$
from $M(X,T)$ to the set of Borel probability measures
on $M(X,T)$ 
is isometric.
\begin{lemma}[\cite{Isr79}, Corollary~IV.4.2]\label{norm-lem}
Let $T$ be a continuous map of a 
compact metric space $X$.
For all $\mu$, $\mu'\in M(X,T)$ 
we have \[\|b_{\mu}-b_{\mu'}\|=\|\mu-\mu'\|.\]
\end{lemma}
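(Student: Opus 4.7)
The plan is to prove the two inequalities $\|\mu-\mu'\| \le \|b_\mu - b_{\mu'}\|$ and $\|b_\mu - b_{\mu'}\| \le \|\mu-\mu'\|$ separately. For the first, easier direction, I would use the contractive linear embedding $j\colon C(X) \to C(M(X,T))$ given by $j(f)(\nu) = \int f\, \mathrm{d}\nu$. The defining property of the barycenter yields $\int j(f)\,\mathrm{d}b_\mu = \int f\, \mathrm{d}\mu$ and analogously for $\mu'$, so $(b_\mu - b_{\mu'})(j(f)) = (\mu - \mu')(f)$ for every $f \in C(X)$. Since $\|j(f)\|_{C^0(M(X,T))} \le \|f\|_{C^0(X)}$, taking the supremum over $\|f\|_{C^0} \le 1$ gives $\|\mu - \mu'\| \le \|b_\mu - b_{\mu'}\|$.

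For the reverse, I would apply the Hahn-Jordan decomposition to the signed measure $b_\mu - b_{\mu'}$ on $M(X,T)$, writing it as $B_+ - B_-$ with $B_\pm \ge 0$ mutually singular and $B_+(M(X,T)) = B_-(M(X,T)) = c$, so that $\|b_\mu - b_{\mu'}\| = 2c$. Since $b_\mu$ and $b_{\mu'}$ are supported on $M^{\rm e}(X,T)$, so are $B_\pm$. I then define invariant non-negative Borel measures $\mu_\pm = \int \nu\, \mathrm{d}B_\pm(\nu)$ on $X$; these satisfy $\mu - \mu' = \mu_+ - \mu_-$ and $\mu_\pm(X) = c$, so it suffices to prove that $\mu_+$ and $\mu_-$ are mutually singular on $X$, which would give $\|\mu-\mu'\| \ge \mu_+(X) + \mu_-(X) = 2c = \|b_\mu - b_{\mu'}\|$.

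To establish mutual singularity, I would pick disjoint Borel sets $E_\pm \subset M^{\rm e}(X,T)$ on which $B_\pm$ are supported, and let $\pi\colon X \to M^{\rm e}(X,T)$ be the measurable ergodic-decomposition map sending a generic point to the ergodic measure for which it is generic in the sense of Birkhoff. By uniqueness of the Choquet representation in the simplex $M(X,T)$ applied to $\mu_\pm$, the barycenter of $\mu_\pm$ is exactly $B_\pm$, which translates into the push-forward identity $\pi_*\mu_\pm = B_\pm$; hence $\mu_\pm$ is concentrated on the disjoint Borel set $\pi^{-1}(E_\pm) \subset X$. The main obstacle will be a clean justification of this last step: one must verify that a single measurable map $\pi$ can be chosen on a shift-invariant Borel set of full measure for every element of $M(X,T)$ simultaneously, and check that the push-forward identities follow from the uniqueness of barycentric decompositions together with the fact that $\mu_\pm$, being positive invariant measures, admit unique representations as integrals over $M^{\rm e}(X,T)$.
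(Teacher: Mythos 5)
Your argument is correct, and it fills in a proof that the paper simply delegates to Israel's book (Corollary~IV.4.2). Israel works in the abstract Choquet/lattice-theoretic setting: the barycenter map is a lattice isomorphism onto its image, and lattice isomorphisms preserve the total-variation norm. Your proposal replaces that abstraction with a concrete ergodic-theoretic argument. The easy direction via the contraction $j\colon C(X)\to C(M(X,T))$, $j(f)(\nu)=\int f\,\mathrm{d}\nu$, is fine as written. For the other direction, your reduction to the mutual singularity of $\mu_\pm=\int\nu\,\mathrm{d}B_\pm(\nu)$ is sound, and the ``main obstacle'' you flag does close up cleanly: take the universal generic-point map $\pi$ (defined on the Borel set $G$ of points whose empirical measures converge to an ergodic limit, with $\lambda(G)=1$ for every $\lambda\in M(X,T)$); since $B_+\le b_\mu$ and $B_-\le b_{\mu'}$, one has $\mu_+\le\mu$ and $\mu_-\le\mu'$, so $\mu_\pm$ are concentrated on $G$; and $\pi_*\mu_\pm=B_\pm$ follows (after normalizing by $c$) from the uniqueness of the ergodic decomposition, i.e.\ the simplex property of $M(X,T)$ that the paper records. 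Then $\pi^{-1}(E_+)$ and $\pi^{-1}(E_-)$ are disjoint Borel sets carrying $\mu_+$ and $\mu_-$ respectively, giving $\|\mu-\mu'\|=\mu_+(X)+\mu_-(X)=2c=\|b_\mu-b_{\mu'}\|$. The trade-off is exactly as you'd expect: Israel's route is shorter if one already has the general Choquet machinery in hand, while yours is self-contained within ergodic theory and makes the role of the simplex property (uniqueness of ergodic decomposition) explicit rather than hidden in an abstract lattice isometry.
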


\subsection{Proof of Theorem~A }\label{pfthma}
For each $n\in\mathbb N$, define
\[O_n=\left\{\mu\in \overline{M^{\rm e}(\Sigma_D)}: 0\leq h(\mu)<\frac{1}{n}\right\},\]
and
\[
    U_n=\{f\in C(\Sigma_D)\colon \overline{M^{\rm e}(\Sigma_D)}\cap M_{{\rm max}}(f)\subset O_n\}.\]
        Clearly, the set
$\mathscr{R}=\{f\in C(\Sigma_D)\colon
h(\mu)=0\ \text{ for all } \mu\in M_{\rm max}(f)\}$ is contained in $\bigcap_{n= 1}^\infty U_n.$ Conversely, let 
$f\in\bigcap_{n=1}^\infty U_n$.
Any ergodic measure in $M_{\rm max}(f)$ has zero entropy, and by Lemma~\ref{supp-lem}, any non-ergodic measure in $M_{\rm max}(f)$ has zero entropy too.
Hence we obtain 
$\mathscr{R}=\bigcap_{n= 1}^\infty U_n.$

By the upper semicontinuity of the entropy function, $O_n$ is an open subset of $\overline{M^{\rm e}(\Sigma_D)}$.
$CO$-measures have zero entropy, and they are dense in $M^{\rm e}(\Sigma_D)$ by Proposition~\ref{per-approx}(a). It follows that $O_n$ is a dense subset of $\overline{M^{\rm e}(\Sigma_D)}$. 
By the result of Morris \cite[Theorem~1.1]{Mor10}, $U_n$
is an open and dense subset of $C(\Sigma_D)$.
Therefore, $\mathscr{R}$ is dense $G_\delta$ as required in Theorem~A(a). 

To prove Theorem~A(b), let $f\in C(\Sigma_D)$.
By Proposition~\ref{join-lem}, there is a high complexity path 
$t\in[0,1]\mapsto\mu_t\in M^{\rm e}(\Sigma_D)$
such that $\mu_0\in M_{\rm max}(f)$. 
 Let $\varepsilon\in(0,1/2)$ and put
\[R=\left\{\nu\in M(\Sigma_D)\colon  \int f{\rm d}\nu\geq\Lambda_\sigma(f)-\varepsilon^2\right\}.\]
%where
%\[\Lambda_\sigma(f_0)=\max\left\{\mu(f_0)\colon\mu\in M(\Sigma_D,\sigma)\right\},\]
%see \eqref{lambdat}.
Let $m$ denote the Lebesgue measure on $[0,1]$, and define a Borel probability measure $\hat m$ on $M^{\rm e}(\Sigma_D)$ by 
$\hat m(\cdot)=m\{t\in[0,1]\colon\mu_t\in\cdot\}$.
Since $f\in C(\Sigma_D)$, we have
 $\hat m(R\cap M^{\rm e}(\Sigma_D))>0.$
Let $\hat m_R$ denote the normalized restriction of $\hat m$ to $R\cap M^{\rm e}(\Sigma_D)$, and
put $\mu=\int_{M^{\rm e}(\Sigma_D)}\nu{\rm d}\hat m_R(\nu)$. Then $\mu$ 
belongs to $R$ and is bounded by $\Lambda_\sigma$ as an element of $C(\Sigma_D)^*$.
Note that $b_{\mu}=\hat m_R$.
By Theorem~\ref{bis}, there exist $\tilde f\in C(\Sigma_D)$ and $\tilde\mu\in C(\Sigma_D)^*$ such that $\tilde\mu$ is tangent to $\Lambda_\sigma$ at $\tilde f$, and
\begin{equation}\label{bis-eq}\|\tilde\mu-\mu\|\leq\varepsilon\ \text{ and }\ \|\tilde f-f\|_{C^0}\leq\frac{1}{\varepsilon}\left(\Lambda_\sigma(f)-\int f{\rm d}\mu\right)\leq\varepsilon.\end{equation}
By Lemma~\ref{Bre}, $\tilde\mu$ is shift-invariant and $\tilde f$-maximizing.

Take %\textcolor{red}{{\sout{a proper}}} 
an open subset $U$ of $M(\Sigma_D)$ such that 
${\rm supp}(b_{\tilde\mu})\subset U$
and $b_{\mu}(U\setminus{\rm supp}(b_{\tilde\mu}))<\varepsilon$.
Since $M^{\rm e}(\Sigma_D)$ is a metric space, it is a normal space. 
By Urysohn's lemma, there exists $g\in C(M(\Sigma_D))$ such that $\|g\|_{C^0}=1$, 
$g\equiv0$ on $M(\Sigma_D)\setminus U$ and
$g\equiv1$ on ${\rm supp}(b_{\tilde\mu})$.
We have
\[\begin{split}b_{\mu}({\rm supp}(b_{\tilde\mu}))&>
b_{\mu}(U)-\varepsilon>b_{\mu}(g)-\varepsilon\geq b_{\tilde\mu}(g)-2\varepsilon\\
&\geq b_{\tilde\mu}({\rm supp}(b_{\tilde\mu}))-2\varepsilon=1-2\varepsilon>0.\end{split}\]
To deduce the third inequality, we have used
$\|b_{\tilde\mu}-b_{\mu}\|=\|\tilde\mu-\mu\|\leq\varepsilon$
from Lemma~\ref{norm-lem} and the first inequality in \eqref{bis-eq}. 
Since $b_{\mu}$ is non-atomic from the property (A2), 
it follows that
the set $\{\mu_t\colon t\in[0,1],\ \mu_t\in {\rm supp}(b_{\tilde \mu})\}$
contains uncountably many elements, which belong to $M_{\rm max}(\tilde f)$ by Lemma~\ref{supp-lem}.
Since $f\in C(\Sigma_D)$ and $\varepsilon\in(0,1/2)$ are arbitrary,
the proof of Theorem~A(b) is complete.\qed

\subsection{Proof of Theorem~B}\label{pfthmb}

Since $M^{\rm e}(\Sigma_D)=M^{\rm e}_{\alpha}(\Sigma_D)\cup M^{\rm e}_{\beta}(\Sigma_D)\cup  M^{\rm e}_{0}(\Sigma_D)$, the path connectedness of 
$M^{\rm e}(\Sigma_D)$ follows from Proposition~\ref{cor-path-1}.
Let $U$
be a convex open subset of $M(\Sigma_D)$. Since $M^{\rm e}(\Sigma_D)$ is dense in $M(\Sigma_D)$,  we have $U\cap M^{\rm e}(\Sigma_D)\neq\emptyset$.
By Proposition~\ref{join-lem}, for each $\gamma\in\{\alpha,\beta\}$ the set
$U\cap(M^{\rm e}_{0}(\Sigma_D)\cup M^{\rm e}_{\gamma}(\Sigma_D))$  
is path connected unless empty.
It follows that $U\cap M^{\rm e}(\Sigma_D)$ 
is path connected.
Since any point in $M(\Sigma_D)$ has a neighborhood base consisting of convex open sets and
 $U$ is an arbitrary convex open subset of $M(\Sigma_D)$, we have verified that
$M^{\rm e}(\Sigma_D)$ is locally path connected.
The proof of Theorem~B is complete.
\qed

\subsection*{Acknowledgments}
Part of this paper was written while HT and KY were visiting Kumamoto University. We thank Naoya Sumi for his hospitality during our visit and for fruitful discussions.
MS was supported by the JSPS KAKENHI 21K13816. HT was supported by the JSPS KAKENHI 23K20220.
KY was supported by the JSPS KAKENHI
21K03321.
      \bibliographystyle{amsplain}

\begin{thebibliography}{10}



\bibitem{Bou00} Bousch T 2000 Le poisson n'a pas d'ar\^etes {\it Ann. Inst. Poincar\'e Probab. Stat.} {\bf 36} 489--508 

 \bibitem{Bou01} Bousch T 2001 La condition de Walters 
 {\it Ann. Sci. \'Ecole Norm. Sup.} {\bf 34} 5988--6017 

  \bibitem{BouJen02} Bousch T and  Jenkinson O 2002
 Cohomology class of dynamically non-negative $C^k$ functions {\it Inventiones Math.} {\bf 148}  207--217

 \bibitem{Bow71} Bowen R 1971 Entropy for group endomorphisms and homogeneous spaces {\it Trans. Am. Math. Soc.} {\bf 153} 401--414

% \bibitem{Bow71} Bowen, R.: Periodic points and measures for Axiom A diffeomorphisms. Trans. Am. Math. Soc. {\bf 154}, 377--397 (1971)
 
 \bibitem{Bow75}
Bowen R 2008
 {\it Equilibrium states and the ergodic theory of Anosov
  diffeomorphisms,} Second revised edition.
  Lecture Notes in Mathematics, {\bf 470}
   Springer-Verlag, Berlin 

   \bibitem{Bre08} Br\'emont J 2008
   Entropy and maximizing measures of generic continuous functions {\it C. R. Math. Acad. Sci.} {\bf 346} 199--201 

%  \bibitem{Cho69} Choquet G {\it Lectures in analysis}, Vol. II (Benjamin, New York, 1969). \textcolor{red}{check}


\bibitem{CT13} Climenhaga D and Thompson D J 2013 Equilibrium states beyond specification and the Bowen property. {\it J. Lond. Math. Soc.} {\bf 87} 401--427 

\bibitem{CT14}  Climenhaga D and Thompson D J 2014 Intrinsic ergodicity via obstruction entropies. {\it Ergod. Theor. Dynam. Syst.} {\bf 34} 1816--1831

   \bibitem{CLT01} Contreras G, Lopes A and
   Thieullen P 2001 Lyapunov minimizing measures for expanding maps of the circle {\it Ergod. Theor. Dynam. Syst.} {\bf 21} 1379--1409 

   

  \bibitem{DGMR19} D\'iaz L J, Gelfert K, Marcarini T and Rams M 2019 The structure of the space of ergodic measures of transitive hyperbolic sets
  {\it Monatsh. Math.}  {\bf 190} 441--479 

    \bibitem{Dow91} Downarowicz T 1991  The Choquet simplex of invariant measures for minimal flows   {\it Isr. J. Math.} {\bf 74} 241--256

    \bibitem{EKW94} Eizenberg A, Kifer Y and Weiss B 1994 Large deviations for $\mathbb Z^d$-actions {\it Commun. Math. Phys.} {\bf 164} 433--454

   \bibitem{FO70} Friedman N A and Ornstein D S 1970 On isomorphism of weak Bernoulli transformations. {\it Adv. Math.} {\bf 5} 365--394 

%   \bibitem{Ga} Galibardi, E.: 



\bibitem{GK18} Gelfert K and Kwietniak D 2018 On density of ergodic measues and generic points {\it Ergod. Theor. Dynam. Syst.} {\bf 38} 1745--1767 

\bibitem{GP17} Gorodetski A and Pesin Y 2017 
Path connectedness and entropy density of the space of
ergodic hyperbolic measures, {\it Modern theory of dynamical systems} {\bf 692} 112--121 


  \bibitem{HI05} Hamachi T and Inoue K 2005 Embedding of shifts of finite type into the Dyck shift {\it Monatsh. Math.} {\bf 145} 107--129 

   \bibitem{HIK09} Hamachi T, Inoue K and Krieger W 2009
Subsystems of finite type and semigroup invariants of subshifts {\it J. Angew. Math.} {\bf 632} 37--61 

  \bibitem{Hay75} Haydon R 1975 A new proof that every Polish space  is the extreme boundary of a simplex
  {\it Bull. London Math. Soc.}  {\bf 7} 97--100

%  \bibitem{HT1} Hunt, B., Ott, E.: Optimal periodic orbits of chaotic systems. Phys. Rev. Lett. {\bf 76} 2254

%   \bibitem{HT2} Hunt, B., Ott, E.: Optimal periodic orbits of chaotic systems occur at low period. Phys. Rev. E {\bf 54} 328

  

%\bibitem{Isr75} Israel, R.B.: Existence of phase transitions for long-range interactions. Commun. Math. Phys. {\bf 43} 59--68 (1975)

\bibitem{Isr79} Israel R B 1979 {\it Convexity in the theory of lattice gases (Princeton Series in Physics)} Princeton, NJ: Princeton University Press  

%\bibitem{Jen06} Jenkinson, O.: Ergodic optimization. Discrete Continuous Dyn. Syst. A {\bf 15}, 197--224  (2006)

%\bibitem{Jen19} Jenkinson, O.: Ergodic optimization in dynamical systems. Ergod. Theory Dyn. Syst. {\bf 29}, 2593--2618 (2019)

%\bibitem{K91} Keller G 1991 Circular codes, loop counting, and zeta-functions. {\it Journal of Combinatorial Theory, Series A} {\bf 56} 75-83 
 
\bibitem{KKK18} %Konieczny, Jakub, Kupsa, Michal, Kwietniak, Dominik
Konieczny J,
Kupsa M and Kwietniak D 2018 Arcwise connectedness of the set of ergodic measures of hereditary shifts
{\it Proc. Am. Math. Soc.} {\bf 146}
 3425--3438 



  
   
 \bibitem{Kri74} Krieger W 1974/75
 On the uniqueness of the equilibrium state {\it Math. Syst. Theory} {\bf 8}  97--104 


 \bibitem{Kri00} Krieger W 2000 On a syntactically defined invariant of symbolic dynamics {\it Ergod. Theor. Dynam. Syst} {\bf 20} 501--516 

\bibitem{KLW16}  Ku\l aga-Przymus J, Lema\'nczyk M and Weiss B 2016 Hereditary subshifts whose simplex of invariant measures is Poulsen {\it Ergodic theory, dynamical systems, and the continuing influence of John C. Oxtoby, Contemp. Math., vol. 678, Amer. Math. Soc., Providence, RI}  245--253 

%\bibitem{KOR16} Kwietniak, D., Oprocha, P., 
%Micha{\l} Rams, Rams, M.: On entropy of dynamical systems with almost specification. Isr. J. Math. {\bf 213}, 475--503 (2016) 
  
 
  
  \bibitem{LOS78}  Lindenstrauss J,  Olsen G and Sternfeld Y 1978 The Poulsen simplex {\it Ann. Inst. Fourier} {\bf 28} 91--114 







% \bibitem{Man97} Ma\~n\'e, R.: Lagrangian flows: the dynamics of globally minimizing orbits. Bol. Soc. Brasil. Mat. {\bf 28} (1997) 141--153.

   \bibitem{M04} Matsumoto K 2004 A simple purely infinite $C^*$-algebra associated with a lambda-graph system of the Motzkin shift {\it Math. Z.}  {\bf 248} 369--394 

    \bibitem{M05} Matsumoto K 2005 Topological entropy in $C^*$-algebras associated with lambda-graph systems {\it Ergod. Theory Dyn. Syst.}  {\bf 25} 1935--1950 

   \bibitem{Mey08} %Tom Meyerovitch. 
Meyerovitch T 2008
Tail invariant measures of the Dyck shift {\it Isr. J. Math.} {\bf 163} 61--83  


\bibitem{Mor09} Morris I D 2009 
The Ma\~n\'e-Conze-Guivarc'h lemma for intermittent maps of the circle
{\it Ergod. Theory Dyn. Syst.}
 {\bf 29} 1603--1611


\bibitem{Mor10} Morris I D 2010 Ergodic optimization for generic continuous functions {\it Discrete Continuous Dyn. Syst.} {\bf 27} 383-388 

\bibitem{Pou61} Poulsen E T 1961
A simplex with dense extreme points {\it Ann. Inst. Fourier} {\bf 11} 83--87 XIV
  
%\bibitem{QS12} Quas, A., Shiefken, J.: Ergodic optimization of super-continuous functions on shift spaces. Ergod. Theory Dyn. Syst. {\bf 32}, 2071--2082 (2012)


 

%\bibitem{STY21} Saiki Y, Takahasi H and  Yorke J A 2021 Piecewise linear maps with heterogeneous chaos {\it Nonlinearity} {\bf 34} 5744--5761 

%\bibitem{STY23}  Saiki Y, Takahasi H and Yorke J A 2023 Hausdorff dimension of Cantor intersections and robust heterodimensional cycles for heterochaos horseshoe maps {\it SIAM J. Appl. Dyn. Syst.} {\bf 22} 1852--1876


%\bibitem{STYY} Saiki Y, Takahasi H, Yamamoto K and Yorke J A The dynamics of the heterochaos baker maps {\it arXiv:2401.00836}

\bibitem{Shi18} Shinoda M 2018 Uncountably many maximizing measures for a dense set of continuous functions {\it Nonlinearity} {\bf 31}  2192--2200 

\bibitem{ShiTak20} Shinoda M and Takahasi H 2020 Lyapunov optimization for non-generic one-dimensional expanding Markov maps {\it Ergod. Theory Dyn. Syst.} {\bf 40} 2571--2592  

\bibitem{STY24} Shinoda M, Takahasi H and Yamamoto K 
Ergodic optimization for continuous functions on non-Markov shifts {\it arXiv:2406.01123}

\bibitem{Sig70} Sigmund K 1970 Generic properties of invariant measures for Axiom A diffeomorphisms
{\it Inventiones Math.} {\bf 11} 99--109

\bibitem{Sig77} Sigmund K 1977 On the connectedness of ergodic systems {\it Manuscr. Math.} {\bf 22} 27--32


 \bibitem{T23} Takahasi H 2024 Exponential mixing for heterochaos baker maps and the Dyck system  {\it J. Dyn. Differ. Equ.} {\bf 36} published online

%\bibitem{TY23} Takahasi H,  Yamamoto K Heterochaos baker maps and the Dyck system: maximal entropy measures and a mechanism for the breakdown of entropy approachability. {\it Proc. Am. Math. Soc.},   to appear, doi:10.1090/proc/16538.  Corrigendum,  \url{https://whs.nagaokaut.ac.jp/yamamoto-ergodic/pdf/correction_proc_AMS.pdf}



% \bibitem{TY24} Takahasi, H., Yamamoto, K.:
%On the liftability of maximal entropy measures of 
% the Dyck shift to Markov extensions. in preparation

 



% \bibitem{Wal82} P. Walters.  {\it An Introduction to Ergodic Theory (Graduate Texts in Mathematics, 79)}.   Springer, New York, 1982.

  \bibitem{W70} %Benjamin Weiss. 
   Weiss B 1970 Intrinsically ergodic systems
   {\it Bull. Am. Math. Soc.} {\bf 76} 1266--1269 




  
  
  \end{thebibliography}

\end{document}